\numberwithin{equation}{section}
  \def\Id{\mbox{\rm Id}\,}
\theoremstyle{plain}
\newtheorem{theorem}{\bf Theorem}[section]
\newtheorem{lemma}[theorem]{\bf Lemma}
\newtheorem{corollary}[theorem]{\bf Corollary}
\newtheorem{proposition}[theorem]{\bf Proposition}
\theoremstyle{definition}
\newtheorem{definition}[theorem]{\bf Definition}
\newtheorem{remark}[theorem]{\bf Remark}
\newtheorem{example}[theorem]{\bf Example}
\newcommand{\bt}{\begin{theorem}}
\newcommand{\et}{\end{theorem}}
\newcommand{\bl}{\begin{lemma}}
\newcommand{\el}{\end{lemma}}
\newcommand{\bd}{\begin{definition}}
\newcommand{\ed}{\end{definition}}
\newcommand{\bc}{\begin{corollary}}
\newcommand{\ec}{\end{corollary}}
\newcommand{\bp}{\begin{proof}}
\newcommand{\ep}{\end{proof}}
\newcommand{\bx}{\begin{example}}
\newcommand{\ex}{\end{example}}
\newcommand{\br}{\begin{remark}}
\newcommand{\er}{\end{remark}}
\newcommand{\be}{\begin{equation}}
\newcommand{\ee}{\end{equation}}
\newcommand{\ba}{\begin{align}}
\newcommand{\ea}{\end{align}}
\newcommand{\bn}{\begin{enumerate}}
\newcommand{\en}{\end{enumerate}}
\newcommand{\bcs}{\begin{cases}}
\newcommand{\ecs}{\end{cases}}
\newcommand{\RNum}[1]{\uppercase\expandafter{\romannumeral #1\relax}}
\renewcommand{\section}{\@startsection{section}{1}{0mm}
  {-\baselineskip}{0.5\baselineskip}{\bf\leftline}}
\begin{document}
\sloppy
\title[IDEAL APPROXIMATION IN $n$-EXANGULATED CATEGORIES]{Ideal approximation in $n$-exangulated categories}
\author[Y. Wang, J. Wei]{Yucheng Wang, Jiaqun Wei}
\address{Institute of Mathematics, School of Mathematical Sciences, Nanjing Normal University,
 Nanjing 210023, P. R. China.\endgraf}
\email{wangyucheng2358@163.com; weijiaqun@njnu.edu.cn.}

\subjclass[2010]{18E05, 18G15, 18G25.}
\keywords{$n$-exangulated categories; $n$-cluster tilting subcategories; ideal approximation theory; $n$-phantom morphisms; $n$-ideal cotorsion pairs.}

\begin{abstract}
  In this paper, we study the ideal approximation theory associated to almost $n$-exact structures in the $n$-exangulated category. The notions of $n$-ideal cotorsion pairs and $n$-$\mathbb{F}$-phantom morphisms are introduced and studied. In particular, let $\mathscr{C}$ be an extriangulated category which satisfies the condition (WIC) and $\mathcal{T}$ be a nicely embedded $n$-cluster tilting subcategory of $\mathscr{C}$, we prove Salce's Lemma in $\mathcal{T}$.
\end{abstract}

\maketitle
\section{Introduction}
Recently, the study of higher homological algebra is an active topic, and its aim is to acquire a higher version of the classical homological theory. In order to build up a higher version of Auslander's correspondence and generalize the classical theory of almost split sequences, Iyama \cite{I} introduced the notion of $n$-cluster tilting subcategories for each positive integer $n$.

The study of $n$-cluster tilting subcategories in abelian categories and exact categories leads Jasso \cite{Ja} to define some new notions such as $n$-abelian and $n$-exact categories.
Similarly, Geiss, Keller and Oppermann \cite{GBS} introduced the notion of $(n+2)$-angulated categories with aim to study the $(n+2)$-cluster tilting subcategories in triangulated categories. Recently, Herschend, Liu and Nakaoka \cite{HLN1,HLN2} introduced the notion of $n$-exangulated categories for any positive integer $n$. It is not only a higher analogue of extriangulated categories defined by Nakaoka and Palu \cite{NP}, but also gives a reasonable generalization of $n$-exact categories and $(n+2)$-angulated categories. For the study of these higher categories, see for example \cite{HHZ1, HHZ2, HZ, ZW, ZZ}.

The approximation theory is one of the efficient tools to study the complicated objects by some simpler objects in a category. Approximation theory originates from the existence of injective envelopes by Baer in 1940. Due to the contributions of Auslander and his colleagues \cite{AR}, the approximation theory has played an important role in the representation theory of algebras. In the classical approximation theory, one used to consider the objects in some special subcategories. By a well-known embedding from a category to its morphism category, objects can be viewed as special morphisms. From the point of view, Fu, Guil Asensio, Herzog and Torrecillas \cite{FGHT} developed the ideal approximation theory. Furthermore, Fu and Herzog \cite{FH} studied the ideal versions of some results of the classical approximation theory.  The ideal approximation theory has been generalized to triangulated categories and extriangulated categories by Breaz--Modoi \cite{BM} and Zhao--Huang \cite{ZH}, respectively. Recently, the ideal approximation theory in the $n$-cluster tilting subcategories of exact categories was studied by Asadollahi and Sadeghi \cite{AS}; and the ideal approximation theory was also generalized to the extension closed subcategories of $n$-angulated categories in \cite{TWZ}. Salce's Lemma is one of the main theorems in the classical approximation theory. It relates the notions of (special) precoverings, (special) preenvelopings and cotorsion pairs. It was firstly introduced in the classical approximation theory in \cite{S}, and then its ideal versions in exact categories and triangulated categories were proved in \cite{FH} and \cite{BM}, respectively. The higher versions in some special $n$-exact categories and $n$-angulated categories also hold, see \cite{AS, TWZ}.

More generally, in this paper, we study the ideal approximation theory in $n$-exangulated categories. Note that the $n$-exact categories and $n$-angulated categories are the special cases of $n$-exangulated categories. Hence, our work generalizes some main results given in \cite{AS}, \cite{FGHT} and \cite{ZH}.

In Section 2, we give some terminologies and preliminary results, and recall the definitions of $n$-exangulated categories and nicely embedded $n$-cluster tilting subcategories. Moreover, we introduce the notion of almost $n$-exact structures in $n$-exangulated categories and give some examples of almost $n$-exact structures. The $n$-ideal cotorsion pairs and higher phantom morphisms in $n$-exangulated categories are defined and studied in Section 3. Section 4 is devoted to studing the connections between special precovering ideals and $n$-phantom morphisms in $n$-exangulated categories. In particular, we prove that under some conditions every special precovering ideal can be represented by {\bf Ph}($\mathbb{F}$) for some subfunctor $\mathbb{F}$. The higher version of Salce's Lemma (Theorem \ref{Thm3}) in nicely embedded $n$-cluster tilting subcategories of extriangulated categories is proved in Section 5. Finally, we give Theorem \ref{Thm5}, which can be viewed as the higher version of \cite[Theorem 1]{FGHT} in nicely embedded $n$-cluster tilting subcategories of extriangulated categories.

\section{Preliminaries}
In this section, we recall the $n$-exangulated categories and nicely embedded $n$-cluster tilting subcategories of extriangulated categories and introduce the almost $n$-exact structures in $n$-angulated categories.

\subsection{$n$-Exangulated categories}
We recall some definitions and basic properties of $n$-exangulated categories. For more details, the reader can refer to \cite{HLN1}. Throughout the subsection, let $\mathscr{C}$ be an additive category and $n$ be a positive integer. Suppose that $\mathscr{C}$ is equipped with an additive bifunctor
$\mathbb{E}:\mathscr{C}^{op} \times \mathscr{C} \rightarrow \mathsf{Ab}$. For any pair of objects $A,C\in \mathscr{C}$, an element $\delta\in \mathbb{E}(C,A)$ is called an $\mathbb{E}$-{\em extension} or simply an {\em extension}. We also write such $\delta$ as $_{A}\delta_{C}$ when we indicate $A$ and $C$. For any
pair of $\mathbb{E}$-{\em extensions} $_{A}\delta_{C}$ and $_{A'}{\delta'} _{C'}$, let $\delta \oplus \delta'\in \mathbb{E}(C \oplus C',A\oplus A')$ be the element corresponding
to $(\delta,0,0,\delta')$ through the natural isomorphism
$$
  \mathbb{E}(C \oplus C',A\oplus A') \simeq \mathbb{E}(C,A)\oplus \mathbb{E}(C',A) \oplus \mathbb{E}(C,A') \oplus \mathbb{E}(C',A').
$$

For any $a\in \mathscr{C}(A,A')$ and $c\in \mathscr{C}(C',C)$, we have $\mathbb{E}(C,a)(\delta)\in \mathbb{E}(C,A')$ and $\mathbb{E}(c,A)(\delta)\in \mathbb{E}(C',A)$, which are denoted by $a_{*}\delta$ and $c^{*}\delta$, respectively.

Let $_{A}\delta_{C}$ and $_{A'}{\delta'}_{C'}$ be any pair of $\mathbb{E}$-{\em extensions}. A {\em morphism} $(a,c):\delta \rightarrow \delta'$ of extensions is a pair of morphisms
$a\in \mathscr{C}(A,A')$ and $c\in \mathscr{C}(C,C')$ such that $a_* \delta =c^* \delta'$.
\begin{definition}
  {\rm \cite[Definition 2.7]{HLN1}}.\label{F1}
{\em Let $\mathbf{C}_\mathscr{C}$ be the category of complexes in $\mathscr{C}$. Define $\mathbf{C}^{n+2}_\mathscr{C}$ to be the full subcategory of $\mathbf{C}_\mathscr{C}$ consisting of the complexes whose components are zero in the degrees outside of $\{0,1,\cdots,n+1\}$. Namely, an object in $\mathbf{C}^{n+2}_\mathscr{C}$ is a complex $X^{\bullet}=\{X,d^{i}_{X}\}$ of the form
  $$
    X_{0}\stackrel{d^{0}_{X}}\longrightarrow X_{1}\stackrel{d^{1}_{X}}\longrightarrow X_{2}\stackrel{d^{2}_{X}}\longrightarrow\cdots \stackrel{d^{n-1}_{X}}\longrightarrow X_{n}\stackrel{d^{n}_{X}}\longrightarrow X_{n+1}.
  $$
We simply write a morphism $f^{\bullet}:X^{\bullet}\rightarrow Y^{\bullet}$ as $f^{\bullet}=(f^0,f^1,\cdots,f^{n+1})$, only indicating the terms of degrees $0,\dots,n+1$.}
\end{definition}

By Yoneda's lemma, any $\mathbb{E}$-extension $\delta\in \mathbb{E}(C,A)$ induces natural transformations
$$\delta_{\sharp}: \mathscr{C}(-,C)\Longrightarrow\mathbb{E}(-,A)~~\text{and}~~\delta^{\sharp}: \mathscr{C}(A,-)\Longrightarrow\mathbb{E}(C,-).$$ For any $X\in\mathscr{C}$, these
$(\delta_{\sharp})_X$ and $(\delta^{\sharp})_X$ are defined by
$(\delta_{\sharp})_X:  \mathscr{C}(X,C)\rightarrow\mathbb{E}(X,A), f\mapsto f^\ast\delta$ and $(\delta^{\sharp})_X:   \mathscr{C}(A,X)\rightarrow\mathbb{E}(C,X), g\mapsto g_\ast\delta$. In what follows, we may also simply write $\delta^{\sharp}_X(f)$ and $(\delta_{\sharp})_X(g)$ as $\delta^{\sharp}(f)$ and $\delta_{\sharp}(g)$, respectively.

\begin{definition}
  {\rm \cite[Definition 2.9]{HLN1}} \label{F2}
  {\em Let $\mathscr{C},\mathbb{E},n$ be as before. Define the category \AE$:=$\AE$_{(\mathscr{C},\mathbb{E})}^{(n+2)}$ as follows.

  {\rm (1)} A object in \AE$_{(\mathscr{C},\mathbb{E})}^{(n+2)}$ is a pair $\langle X^{\bullet},\delta\rangle $ consisting of $X^{\bullet}\in\mathbf{C}_\mathscr{C}^{n+2}$ and $\delta\in\mathbb{E}(X_{n+1},X_0)$ such that
  $$
    (d^0_X)_*\delta=0 \ \ and \ \ (d^n_X)^*\delta=0.
  $$
We call such a pair an $\mathbb{E}$-attached complex of length $n+2$. We also denote it by
  $$
    X_{0}\stackrel{d^{0}_{X}}\longrightarrow X_{1}\stackrel{d^{1}_{X}}\longrightarrow X_{2}\stackrel{d^{2}_{X}}\longrightarrow \cdots \stackrel{d^{n-1}_{X}}\longrightarrow  X_{n}\stackrel{d^{n}_{X}}\longrightarrow X_{n+1} \stackrel{\delta}\dashrightarrow.
  $$

  {\rm (2)} For any pairs $\langle X^{\bullet},\delta\rangle $ and $\langle Y^{\bullet},\rho\rangle $, a morphism $f^{\bullet}:\langle X^{\bullet},\delta\rangle \rightarrow \langle Y^{\bullet},\rho\rangle $ is defined to be a morphism $f^{\bullet}\in\mathbf{C}_\mathscr{C}^{n+2}(X^{\bullet},Y^{\bullet})$ satisfying $(f_0)_*\delta=(f_{n+1})^*\rho$.

  We use the same composition and the identities as in $\mathbf{C}^{n+2}_\mathscr{C}$.}
\end{definition}

\begin{definition}
  {\rm \cite[Definition 2.13]{HLN1}} \label{F3}
  {\em An $n$-exangle is a pair $\langle X^{\bullet},\delta\rangle $ consisting of $X^{\bullet}\in\mathbf{C}_\mathscr{C}^{n+2}$ and $\delta\in\mathbb{E}(X_{n+1},X_0)$ such that the following conditions hold.

  {\rm (1)} The following sequence of functors $\mathscr{C}^{op}\rightarrow \mathsf{Ab}$ is exact:
  $$
    \mathscr{C}(-,X_0)\stackrel{\mathscr{C}(-,d^{0}_{X})}\longrightarrow  \cdots \stackrel{\mathscr{C}(-,d^{n}_{X})}\longrightarrow \mathscr{C}(-,X_{n+1})\stackrel{\delta_{\sharp}}\longrightarrow \mathbb{E}(-,X_0).
  $$

  {\rm (2)} The following sequence of functors $\mathscr{C}\rightarrow \mathsf{Ab}$ is exact:
  $$
    \mathscr{C}(X_{n+1},-)\stackrel{\mathscr{C}(d^{n}_{X},-)}\longrightarrow  \cdots \stackrel{\mathscr{C}(d^{0}_{X},-)}\longrightarrow \mathscr{C}(X_0,-)\stackrel{\delta^{\sharp}}\longrightarrow \mathbb{E}(X_{n+1},-).
  $$
In particular, any $n$-exangle is an object in \AE. A $morphism$ of $n$-exangles simply means a
morphism in \AE. Thus $n$-exangles form a full subcategory of \AE.}
\end{definition}

\begin{lemma}
  {\rm \cite[Claim 2.15]{HLN1}} \label{lemma4}
  For any $n$-exangle $\langle X^{\bullet},\delta\rangle$, the following are equivalent.

  {\rm (1)} $\delta=0$.

  {\rm (2)} There is $r\in \mathscr{C}(X_1,A)$ such that $r\circ d_X^0=1_A$.

  {\rm (3)} There is $s\in \mathscr{C}(C,X_n)$ such that $d_X^n\circ s=1_C$.

\noindent If one of the above conditions holds, then we say that the $n$-exangle $\langle X^{\bullet},\delta\rangle$ splits.
\end{lemma}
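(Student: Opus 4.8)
The plan is to prove $(1)\Leftrightarrow(2)$ and $(1)\Leftrightarrow(3)$ separately, writing $A=X_0$ and $C=X_{n+1}$ so that $\delta\in\mathbb{E}(C,A)$, $d_X^0\in\mathscr{C}(A,X_1)$ and $d_X^n\in\mathscr{C}(X_n,C)$. The two implications out of $(1)$ will be read off from the two long exact sequences of (co)representable functors attached to an $n$-exangle in Definition \ref{F3}, by evaluating each of them at one well-chosen object and chasing an identity morphism; the two implications back into $(1)$ will use only the defining vanishing conditions of an object of \AE\ (Definition \ref{F2}(1)) together with the bifunctoriality of $\mathbb{E}$.

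For $(1)\Rightarrow(2)$: the tail
$$\mathscr{C}(X_1,-)\xrightarrow{\mathscr{C}(d_X^0,-)}\mathscr{C}(X_0,-)\xrightarrow{\delta^{\sharp}}\mathbb{E}(C,-)$$
of the exact sequence of functors $\mathscr{C}\rightarrow\mathsf{Ab}$ in Definition \ref{F3}(2) is exact at $\mathscr{C}(X_0,-)$. Evaluating at the object $X_0$ and using $(\delta^{\sharp})_{X_0}(1_{X_0})=(1_{X_0})_{*}\delta=\delta=0$, we obtain $1_{X_0}\in\Ker(\delta^{\sharp})_{X_0}=\im\mathscr{C}(d_X^0,X_0)$, that is, $r\circ d_X^0=1_{X_0}$ for some $r\in\mathscr{C}(X_1,X_0)$, which is $(2)$. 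Dually, for $(1)\Rightarrow(3)$: the tail
$$\mathscr{C}(-,X_n)\xrightarrow{\mathscr{C}(-,d_X^n)}\mathscr{C}(-,X_{n+1})\xrightarrow{\delta_{\sharp}}\mathbb{E}(-,X_0)$$
of the exact sequence of functors $\mathscr{C}^{op}\rightarrow\mathsf{Ab}$ in Definition \ref{F3}(1) is exact at $\mathscr{C}(-,X_{n+1})$. Evaluating at the object $X_{n+1}$ and using $(\delta_{\sharp})_{X_{n+1}}(1_{X_{n+1}})=(1_{X_{n+1}})^{*}\delta=\delta=0$, we obtain $1_{X_{n+1}}\in\Ker(\delta_{\sharp})_{X_{n+1}}=\im\mathscr{C}(X_{n+1},d_X^n)$, that is, $d_X^n\circ s=1_{X_{n+1}}$ for some $s\in\mathscr{C}(C,X_n)$, which is $(3)$.

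For $(2)\Rightarrow(1)$: by Definition \ref{F2}(1) we have $(d_X^0)_{*}\delta=0$ in $\mathbb{E}(C,X_1)$; given $r$ with $r\circ d_X^0=1_{X_0}$, functoriality of $\mathbb{E}(C,-)$ yields
$$\delta=(1_{X_0})_{*}\delta=(r\circ d_X^0)_{*}\delta=r_{*}\big((d_X^0)_{*}\delta\big)=r_{*}(0)=0.$$
For $(3)\Rightarrow(1)$: likewise $(d_X^n)^{*}\delta=0$ in $\mathbb{E}(X_n,A)$, and given $s$ with $d_X^n\circ s=1_{X_{n+1}}$, functoriality of $\mathbb{E}(-,A)$ yields
$$\delta=(1_{X_{n+1}})^{*}\delta=(d_X^n\circ s)^{*}\delta=s^{*}\big((d_X^n)^{*}\delta\big)=s^{*}(0)=0.$$

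There is no deep obstacle here; the only points needing care are the bookkeeping of variances — which of the two exact sequences controls $d_X^0$ and which controls $d_X^n$, and whether the relevant functoriality is covariant ($a_{*}$, $\delta^{\sharp}$) or contravariant ($c^{*}$, $\delta_{\sharp}$) — and the small but essential observation that feeding the appropriate identity morphism into the connecting natural transformation returns $\delta$ itself, so that its vanishing is exactly condition $(1)$.
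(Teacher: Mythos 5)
Your proof is correct, and the paper itself does not supply one --- it simply cites this as \cite[Claim 2.15]{HLN1} --- so there is nothing in-paper to compare against. Your argument is exactly the standard one: for $(1)\Rightarrow(2)$ and $(1)\Rightarrow(3)$ you evaluate the two defining long exact sequences of Definition~\ref{F3} at $X_0$ and $X_{n+1}$ respectively and push $1_{X_0}$, $1_{X_{n+1}}$ through $\delta^{\sharp}$, $\delta_{\sharp}$; for the converses you use $(d_X^0)_*\delta=0$, $(d_X^n)^*\delta=0$ from Definition~\ref{F2}(1) together with (co/contra)variance of $\mathbb{E}$. You have the variances the right way around (the covariant sequence in Definition~\ref{F3}(2) controls $d_X^0$, the contravariant one in Definition~\ref{F3}(1) controls $d_X^n$), and the identification of the connecting map on the identity with $\delta$ itself is the small essential point, which you have stated explicitly. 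This matches the proof of \cite[Claim 2.15]{HLN1}.
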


\begin{definition}
  {\rm \cite[Definition 2.22]{HLN1}} \label{F4}
  {\em Let $\mathfrak{s}$ be a correspondence which associates a homotopic equivalence class $\mathfrak{s}(\delta)=[ _{A}{X^{\bullet}}_{C}]$ to each extension $\delta={_{A}\delta_{C}}$. Such $\mathfrak{s}$ is called a realization of $\mathbb{E}$ if it satisfies the following condition for any $\mathfrak{s}(\delta)=[X^{\bullet}]$ and $\mathfrak{s}(\rho)=[Y^{\bullet}]$.

  $\rm(R0)$ For any morphism of extensions $(a,c):\delta\rightarrow\rho$, there exists a morphism $f^{\bullet}\in\mathbf{C}_\mathscr{C}^{n+2}(X^{\bullet},Y^{\bullet})$ of the form $f^{\bullet}=(a,f_1,\cdots,f_n,c)$. Such $f^{\bullet}$ is called a lift of $(a,c)$.

  In such a case, we simply say that ``$X^{\bullet}$ realizes $\delta$'' whenever they satisfy $\mathfrak{s}(\delta)=[X^{\bullet}]$.

  Moreover, a realization $\mathfrak{s}$ of $\mathbb{E}$ is said to be $exact$ if it satisfies the following conditions.

  {\rm(R1)} For any $\mathfrak{s}(\delta)= [ X^\bullet ]$, the pair $\langle X^{\bullet},\delta\rangle $ is an $n$-exangle.

  {\rm(R2)} For any $A\in \mathscr{C}$, the zero element $_A {0}_0 =0 \in \mathbb{E}(0,A)$ satisfies
  $$
    \mathfrak{s}(_A{0}_0)=[ A \stackrel{1_A}\longrightarrow A \longrightarrow 0 \longrightarrow \cdots \longrightarrow 0 ].
  $$
Dually, $\mathfrak{s}(_0{0}_A)=[ 0 \longrightarrow 0 \longrightarrow \cdots \longrightarrow A \stackrel{1_A}\longrightarrow A ]$ holds for any $A\in \mathscr{C}$.}
\end{definition}

\begin{definition}
  {\rm \cite[Definition 2.23]{HLN1}} \label{F5}
  {\em Let $\mathbb{E}$ and  $\mathbf{C}_\mathscr{C}^{n+2}$ be as before, $\mathfrak{s}$ be an exact realization of $\mathbb{E}$. We use the following terminologies:

  {\rm (1)} An $n$-exangle $\langle X^{\bullet},\delta\rangle $ is called an $\mathfrak{s}$-{\em distinguished} $n$-exangle if it satisfies  $\mathfrak{s}(\delta)=[X^{\bullet}]$. We often simply say it is a distinguished $n$-exangle when $\mathfrak{s}$ is clear from the context.

  {\rm (2)} An object $X^{\bullet}\in\mathbf{C}_\mathscr{C}^{n+2}$ is called an $\mathfrak{s}$-{\em conflation} or simply a conflation if it realizes some extension $\delta\in\mathbb{E}(X_{n+1},X_0)$.

  {\rm (3)} A morphism $f$ in $\mathscr{C}$ is called an $\mathfrak{s}$-{\em inflation} or simply an inflation if it admits some conflation $X^{\bullet}\in\mathbf{C}_\mathscr{C}^{n+2}$ satisfying $d_0^X=f$.

  {\rm (4)} A morphism $g$ in $\mathscr{C}$ is called an $\mathfrak{s}$-{\em deflation} or simply a {\em deflation} if it admits some conflation $X^{\bullet}\in\mathbf{C}_\mathscr{C}^{n+2}$ satisfying $d_n^X=g$.}
\end{definition}

\begin{definition}
  {\rm \cite[Definition 2.27]{HLN1}} \label{F6}
  {\em For a morphism $f^{\bullet}\in\mathbf{C}_\mathscr{C}^{n+2}(X^{\bullet},Y^{\bullet})$ satisfying $f_0={\Id}_A$ for some $A=X_0=Y_0$, its {\em mapping cone} $M_f^{\bullet}\in\mathbf{C}_\mathscr{C}^{n+2}$ is defined to be the complex
  $$
    X_1 \stackrel{d^0_{M_f}}\rightarrow X_2\oplus Y_1 \stackrel{d^1_{M_f}}\rightarrow X_3\oplus Y_2 \stackrel{d^2_{M_f}}\rightarrow \cdots \stackrel{d^{n-1}_{M_f}}\rightarrow X_{n+1}\oplus Y_n \stackrel{d^{n}_{M_f}}\rightarrow Y_{n+1},
  $$
where $d^0_{M_f}=\begin{bmatrix} -d^1_X \\ f^1 \end{bmatrix}$, $d^i_{M_f}=\begin{bmatrix} -d^{i+1}_X & 0\\ f^{i+1} & d_Y^i \end{bmatrix} (1\leq i \leq n-1)$, $d^n_{M_f}=\begin{bmatrix} f^{n+1} & d_Y^n\end{bmatrix}$. The {\em mapping cocone} is defined dually, for morphisms $h^{\bullet}$ satisfying $h^{n+1}=\Id$ in $\mathbf{C}_\mathscr{C}^{n+2}$.}
\end{definition}

Now we can define the $n$-exangulated category.

\begin{definition}
  {\rm \cite[Definition 2.32]{HLN1}} \label{F7}
  {\em An $n$-exangulated category is a triplet $(\mathscr{C},\mathbb{E},\mathfrak{s})$ consisting of an additive category $\mathscr{C}$, biadditive functor $\mathbb{E}:\mathscr{C}^{op}\times \mathscr{C}\rightarrow \mathsf{Ab}$, and its exact realization $\mathfrak{s}$ such that the following conditions hold.\\
  {\rm (EA1) }Let $A\stackrel{f}\rightarrow B\stackrel{g}\rightarrow C$ be any sequence of morphisms in $\mathscr{C}$. If both $f$ and $g$ are inflations, then so is $g\circ f$. Dually, if $f$ and $g$ are deflations then so is $g\circ f$.\\
  {\rm(EA2)} For $\rho\in\mathbb{E}(D,A)$ and $c\in\mathscr{C}(C,D)$, let $_A{\langle X^{\bullet},c^*\rho\rangle}_C$ and $_A{\langle Y^{\bullet},\rho\rangle}_D$ be distinguished $n$-exangles. Then $(1_A,c)$ has a good lift $f^{\bullet}$, in the sense that its mapping cone gives a distinguished $n$-exangle $\langle M_f^{\bullet},(d^0_X)_*\rho\rangle$.\\
  {{\rm(EA}$2^{\rm op}$)} Dual of {\rm (EA2)}.}
\end{definition}

Here we recall some basic properties of the $n$-exangulated category $\mathscr{C}$.

\begin{lemma}
  {\rm \cite[Propsition 3.6]{HLN1}}\label{lemma1}
  Let $_A{\langle X^{\bullet},\delta\rangle}_C$ and $_B{\langle Y^{\bullet},\rho\rangle}_D$ be distinguished $n$-exangles. Suppose that we are given a commutative square
  $$
    \xymatrix{
     X_0 \ar[d]_-{a} \ar[r]^-{d^0_X} & X_1 \ar[d]^-{b}\\
     Y_0 \ar[r]^-{d^0_Y} & Y_1}
  $$
in $\mathscr{C}$. Then the following holds.

$(1)$ There is a morphism $f^{\bullet}:\langle X^{\bullet},\delta\rangle \rightarrow \langle Y^{\bullet},\rho\rangle$ satisfying $f^0=a$ and $f^1=b$.

$(2)$ If $X_0=Y_0=A$ and $a=1_A$ for some $A\in \mathscr{C}$, then the above $f^{\bullet}$ can be taken to give a distinguished $n$-exangle $\langle M_f^{\bullet},(d^0_X)_*\rho\rangle$.
\end{lemma}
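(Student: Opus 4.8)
The plan is to build the desired morphism $f^\bullet$ in three moves: first produce its last component $c=f^{n+1}$ together with the extension‑compatibility $a_\ast\delta=c^\ast\rho$; then fill in the intermediate components $f^2,\dots,f^n$ using $(\mathrm{R}0)$; and finally correct $f^1$ to $b$ by a homotopy concentrated in one degree. For part (2) the same homotopy trick will be applied to the good lift supplied by $(\mathrm{EA}2)$. For the first move I would observe, using that $\langle X^\bullet,\delta\rangle$ is an $\mathbb{E}$‑attached complex (Definition \ref{F2}) together with the commutativity $b\circ d^0_X=d^0_Y\circ a$, that $(d^0_Y)_\ast(a_\ast\delta)=(d^0_Y\circ a)_\ast\delta=(b\circ d^0_X)_\ast\delta=b_\ast\big((d^0_X)_\ast\delta\big)=0$, so $a_\ast\delta\in\mathbb{E}(C,Y_0)$ is killed by $(d^0_Y)_\ast$. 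The key input is the exactness of $\mathscr{C}(C,Y_{n+1})\xrightarrow{\rho_\sharp}\mathbb{E}(C,Y_0)\xrightarrow{(d^0_Y)_\ast}\mathbb{E}(C,Y_1)$, i.e. the long exact sequence attached to the distinguished $n$-exangle $\langle Y^\bullet,\rho\rangle$, which is extracted from the axioms $(\mathrm{EA}2)$, $(\mathrm{EA}2^{\rm op})$; it produces $c\in\mathscr{C}(C,Y_{n+1})=\mathscr{C}(X_{n+1},Y_{n+1})$ with $c^\ast\rho=a_\ast\delta$, so $(a,c)\colon\delta\to\rho$ is a morphism of extensions.

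Next, by $(\mathrm{R}0)$ there is a lift $g^\bullet=(a,g^1,\dots,g^n,c)\in\mathbf{C}^{n+2}_\mathscr{C}(X^\bullet,Y^\bullet)$ (alternatively one may build $g^2,\dots,g^n$ by hand from the exactness in Definition \ref{F3}(2)). It need not satisfy $g^1=b$, but $g^1 d^0_X=d^0_Y g^0=d^0_Y a=b\,d^0_X$, hence $(b-g^1)\circ d^0_X=0$; evaluating the exact complex of functors in Definition \ref{F3}(2) for $\langle X^\bullet,\delta\rangle$ at $Y_1$ and using exactness at $\mathscr{C}(X_1,Y_1)$, I get $h\in\mathscr{C}(X_2,Y_1)$ with $b-g^1=h\circ d^1_X$. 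Then I would let $s^\bullet$ be the homotopy with $s^2=h$ and $s^i=0$ for $i\ne2$ and set $f^i=g^i+d^{i-1}_Y s^i+s^{i+1}d^i_X$; this $f^\bullet\colon X^\bullet\to Y^\bullet$ is again a morphism in $\mathbf{C}^{n+2}_\mathscr{C}$ (being $g^\bullet$ plus a null‑homotopic morphism), $f^0=a$ since $s^1=0$, and $f^1=g^1+h\,d^1_X=b$. Finally $f^{n+1}=c+d^n_Y s^{n+1}$, where $s^{n+1}\ne0$ only when $n=1$; since in that case $(d^1_Y)^\ast\rho=(d^n_Y)^\ast\rho=0$ by Definition \ref{F2} for $\langle Y^\bullet,\rho\rangle$, one gets $(f^{n+1})^\ast\rho=c^\ast\rho=a_\ast\delta=(f^0)_\ast\delta$ in all cases. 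Hence $f^\bullet$ is a morphism of $n$-exangles with $f^0=a$ and $f^1=b$, which proves (1).

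For part (2), when $X_0=Y_0=A$ and $a=1_A$ the first move gives $\delta=(1_A)_\ast\delta=c^\ast\rho$ with $c=f^{n+1}$. I would then apply $(\mathrm{EA}2)$ (Definition \ref{F7}) to $\rho\in\mathbb{E}(D,A)$, $c\in\mathscr{C}(C,D)$ and the distinguished $n$-exangles ${}_A\langle X^\bullet,c^\ast\rho\rangle_C$, ${}_A\langle Y^\bullet,\rho\rangle_D$: it yields a good lift $\tilde f^\bullet=(1_A,\tilde f^1,\dots,\tilde f^n,c)$ of $(1_A,c)$, so that $\langle M^\bullet_{\tilde f},(d^0_X)_\ast\rho\rangle$ (mapping cone as in Definition \ref{F6}) is a distinguished $n$-exangle. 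It then remains to arrange $\tilde f^1=b$: exactly as above, $b-\tilde f^1=h'\circ d^1_X$ for some $h'\in\mathscr{C}(X_2,Y_1)$, and replacing $\tilde f^\bullet$ by the homotopic lift $\hat f^\bullet$ with homotopy $s^2=h'$, $s^i=0$ otherwise, keeps $\hat f^0=1_A$ and makes $\hat f^1=b$. Since the attached extension $(d^0_X)_\ast\rho$ depends only on $d^0_X$ and $\rho$, and chain‑homotopic morphisms with equal $0$‑components have homotopy‑equivalent mapping cones, $[M^\bullet_{\hat f}]=[M^\bullet_{\tilde f}]=\mathfrak{s}\big((d^0_X)_\ast\rho\big)$, so $\langle M^\bullet_{\hat f},(d^0_X)_\ast\rho\rangle$ is still distinguished and $\hat f^\bullet$ is the required morphism.

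The main obstacle is the first move: Definition \ref{F3} only records the exactness of the Yoneda complexes $\mathscr{C}(-,X_\bullet)$ and $\mathscr{C}(X_\bullet,-)$, which propagates a partial chain map but leaves its last component ambiguous, so no amount of mere functoriality pins down a $c$ with $c^\ast\rho=a_\ast\delta$. Producing such a $c$ is the $n$-exangulated counterpart of completing a commutative square to a morphism of distinguished triangles, and it is precisely here that the exangulated axioms $(\mathrm{EA}2)/(\mathrm{EA}2^{\rm op})$ — through the long exact sequence of a distinguished $n$-exangle — have to be invoked; the rest of the argument is essentially formal. A secondary technical point, used in part (2), is the fact that homotopic lifts have homotopy‑equivalent mapping cones, which is what allows the good‑lift property to survive the homotopy that restores $f^1=b$.
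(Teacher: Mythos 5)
The paper itself does not prove this lemma: it is quoted from \cite[Proposition 3.6]{HLN1} without argument, so there is no in-paper proof to compare against; the review below is of the proposal on its own terms.

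Your three-stage route --- first manufacture $c=f^{n+1}$ so that $(a,c)\colon\delta\to\rho$ is a morphism of extensions, then take a lift $g^{\bullet}$ from $(\mathrm{R}0)$, and finally repair $g^1$ to $b$ by a homotopy concentrated in degree $2$ --- is coherent, and the homotopy bookkeeping is right. In particular the $n=1$ edge case in part (1) is handled correctly by $(d^n_Y)^{\ast}\rho=0$, and in part (2) the claim that a homotopy between lifts with fixed $0$-th and $(n+1)$-th components does not change the distinguished status of the mapping cone can be made precise: the chain isomorphism $M^{\bullet}_{\tilde f}\to M^{\bullet}_{\hat f}$ given by $\begin{bmatrix}1&0\\-h'&1\end{bmatrix}$ on $X_2\oplus Y_1$ and the identity elsewhere shows the two cones are isomorphic, hence represent the same homotopy class, so $\langle M^{\bullet}_{\hat f},(d^0_X)_{\ast}\rho\rangle$ is still distinguished.

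The step that needs more care is the exactness of
$\mathscr{C}(C,Y_{n+1})\xrightarrow{\ \rho_{\sharp}\ }\mathbb{E}(C,Y_0)\xrightarrow{\ (d^0_Y)_{\ast}\ }\mathbb{E}(C,Y_1)$.
Definition~\ref{F3} only asserts exactness of $\mathscr{C}(-,Y_0)\to\cdots\to\mathscr{C}(-,Y_{n+1})\xrightarrow{\rho_{\sharp}}\mathbb{E}(-,Y_0)$, i.e.\ at the $\mathscr{C}(-,Y_i)$ spots; exactness \emph{at} $\mathbb{E}(-,Y_0)$, extending the Yoneda sequence one notch into the $\mathbb{E}$-region, is a separate theorem, and it genuinely requires the full $(\mathrm{EA}2)$/$(\mathrm{EA}2^{\rm op})$ axioms. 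You flag this, which is the right instinct, but note that in \cite{HLN1} the long exact sequence and Proposition 3.6 appear in the same section, so one must verify that the specific extended exactness you invoke is established \emph{independently} of the proposition under proof; otherwise the argument is circular. A route that avoids this issue builds $f^2,\dots,f^{n+1}$ inductively from the exactness of Definition~\ref{F3}(2) alone (starting from $(d^1_Y b)\circ d^0_X = d^1_Y d^0_Y a=0$ and lifting at each $\mathscr{C}(X_i,Y_{i+1})$), and then separately arranges the extension-compatibility $a_{\ast}\delta=(f^{n+1})^{\ast}\rho$ --- that last correction is the one place where something beyond the raw exactness of the Yoneda complexes is required, and it is precisely there that the exangulated axioms must be invoked in a form you would still need to justify.
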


\begin{lemma}\label{lemma2}
For any morphism of distinguished $n$-exangles:
    $$\xymatrix{
      A \ar[d]_-{a} \ar[r]^-{d^0_X} & X_1 \ar[d] \ar[r]^-{d^1_X} & X_2 \ar[d] \ar[r]^-{d^2_X} & \cdots  \ar[r]^-{d^{n-1}_X} & X_n \ar[d] \ar[r]^-{d^n_X} & C \ar[d]_-{c} \ar@{-->}^{\gamma}[r] & \\
      B \ar[r]^-{d^0_Y} & Y_1 \ar[r]^-{d^1_Y} & Y_2 \ar[r]^-{d^2_Y} & \cdots \ar[r]^-{d^{n-1}_Y} & Y_n \ar[r]^-{d^n_Y} & D \ar@{-->}^{\eta}[r] &
      }$$
we have that $a$ factors through $d^0_X$ if and only if $c$ factors through $d^n_Y$.
  \begin{proof}
  We only prove the necessity, since the sufficiency can be proved dually.
     Since $a$ factors through $d^0_X$, there exists $w:X_1\rightarrow B$ such that $wd^0_X=a$. Since every $n$-exangle belongs to \AE, we have that ${d_X^0}_*\gamma=0$. By the definition of $n$-exangle morphisms, we know that $a_*\gamma=c^*\eta$, so we have that
    $$
    a_*\gamma =(wd^0_X)_*\gamma=w_*(d^0_{X*}\gamma)= w_*(0)=0.
    $$
  Finally, consider the exact sequence $\mathscr{C}(C,Y_n) \stackrel{\mathscr{C}(C,d^{n}_{Y})}\longrightarrow \mathscr{C}(C,D)\stackrel{\eta_{\sharp}}\longrightarrow \mathbb{E}(C,B)$, there exists $v:C\rightarrow Y$ such that $d^n_Yv=c$, as we desired.
  \end{proof}
\end{lemma}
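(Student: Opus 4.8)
The plan is to run a diagram chase through the Yoneda-type natural transformations $\gamma_\sharp,\gamma^\sharp$ and $\eta_\sharp,\eta^\sharp$ attached to the two extensions, using only two facts: that a distinguished $n$-exangle $\langle X^\bullet,\gamma\rangle$ is in particular an object of \AE\ (Definition \ref{F2}), so that $(d^0_X)_*\gamma=0$ and $(d^n_X)^*\gamma=0$; and that the two long sequences of Definition \ref{F3} are exact. Since the assertion is self-dual, I would prove only the necessity and obtain the sufficiency by dualizing (equivalently, by applying the proven implication in $\mathscr{C}^{op}$).

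For necessity, write the given morphism of $n$-exangles as $f^\bullet=(a,f^1,\dots,f^n,c)$; by the defining condition of a morphism in \AE\ we have $a_*\gamma=c^*\eta$. Assume $a=w\circ d^0_X$ with $w\colon X_1\to B$. Then, using functoriality of $\mathbb{E}(C,-)$ together with $(d^0_X)_*\gamma=0$, one computes $c^*\eta=a_*\gamma=(w\circ d^0_X)_*\gamma=w_*\bigl((d^0_X)_*\gamma\bigr)=0$, that is, $\eta_\sharp(c)=0$ in $\mathbb{E}(C,B)$. Now I would evaluate the first exact sequence of Definition \ref{F3} for $\langle Y^\bullet,\eta\rangle$ at the object $C$: the segment $\mathscr{C}(C,Y_n)\stackrel{\mathscr{C}(C,d^n_Y)}{\longrightarrow}\mathscr{C}(C,D)\stackrel{\eta_\sharp}{\longrightarrow}\mathbb{E}(C,B)$ is exact, so $c\in\ker\eta_\sharp$ is of the form $c=d^n_Y\circ v$ for some $v\colon C\to Y_n$, which is precisely the statement that $c$ factors through $d^n_Y$.

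The sufficiency runs symmetrically: from $c=d^n_Y\circ v$ and $(d^n_Y)^*\eta=0$ one gets $a_*\gamma=c^*\eta=v^*\bigl((d^n_Y)^*\eta\bigr)=0$, and then the \emph{second} exact sequence of Definition \ref{F3} for $\langle X^\bullet,\gamma\rangle$, evaluated at $B$, yields $u\colon X_1\to B$ with $a=u\circ d^0_X$. I do not expect any real obstacle here; the only things needing care are bookkeeping — correctly matching the endpoints $X_0=A$, $X_{n+1}=C$, $Y_0=B$, $Y_{n+1}=D$ to the appropriate variable slot, and choosing the covariant versus the contravariant long exact sequence of Definition \ref{F3} according to whether one is factoring $c$ or $a$ — together with the routine functoriality identities $(w\circ d^0_X)_*=w_*\circ(d^0_X)_*$ and $(d^n_Y\circ v)^*=v^*\circ(d^n_Y)^*$.
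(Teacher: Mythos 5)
Your proposal is correct and takes essentially the same route as the paper: both use $a_*\gamma = c^*\eta$ together with $(d^0_X)_*\gamma = 0$ to get $\eta_\sharp(c)=0$, then invoke the exactness of the long sequence at $\mathscr{C}(C,D)$ to factor $c$ through $d^n_Y$. The only difference is cosmetic — you spell out the dual (sufficiency) direction explicitly, whereas the paper simply asserts it follows by duality; your identification of which of the two long exact sequences of Definition \ref{F3} to use in each direction, and at which object to evaluate, is accurate.
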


\subsection{$n$-Cluster tilting subcategories}

Following \cite{HLN2}, we recall the notion of $n$-cluster tilting subcategories in extriangulated categories. Throughout this subsection, let $(\mathscr{C},\mathbb{E},\mathfrak{s})$ be a $1$-exangulated category, or equivalently, an extriangulated category. Assume that it has enough projectives and injectives in the sense of \cite[Definition 1.13]{LN}, and denote by $\mathcal{P}$ and $\mathcal{I}$ the full subcategories of {projectives} and {injectives}, respectively. Define a biadditive functor $\mathbb{E}^i\colon \mathscr{C}^{op}\times \mathscr{C}\rightarrow \mathsf{Ab}$ to be the composition of
$$
  \mathscr{C}^{op}\times\mathscr{C}\rightarrow \underline{\mathscr{C}}^{op}\times \overline{\mathscr{C}} \stackrel{\Id\times\Sigma^{i-1}}\longrightarrow \underline{\mathscr{C}}^{op}\times \overline{\mathscr{C}} \stackrel{\mathbb{E}}\rightarrow \mathsf{Ab},
$$
where $\underline{\mathscr{C}}$ and $\overline{\mathscr{C}}$ are the stable categories $\mathscr{C}/\mathcal{P}$ and $\mathscr{C}/\mathcal{I}$, respectively; $\Sigma$ is the syzygy functor (see \cite[Assumption 3.3]{HLN2}). For any positive integers $i,j$ and any $A,C,X\in\mathscr{C}$, one defines the cup product $\cup\colon \mathbb{E}^i(X,A)\times \mathbb{E}^j(C,X)\rightarrow \mathbb{E}^{i+j}(C,A)$ by $\delta\cup\theta=\mathbb{E}^j(C,\overline{\delta})(\theta)$ for any pair $(\delta,\theta)\in\mathbb{E}^i(X,A)\times \mathbb{E}^j(C,X)$ and $\overline{\delta}\in\overline{\mathscr{C}}(X,\Sigma^i A)$. The specific details for these notions can be found in \cite[Section 3]{HLN2}.

\begin{definition}
  {\rm \cite[Definition 3.21]{HLN2}}\label{F8}
  {\em Let $\mathcal{T}\subseteq \mathscr{C}$ be a full additive subcategory closed under isomorphisms and direct summands. Such $\mathcal{T}$ is called an $n$-$cluster$ $tilting$ subcategory of $\mathscr{C}$, if it satisfies the following conditions.

  {\rm (1)} $\mathcal{T}$ is functorially finite.

  {\rm (2)} For any $C\in \mathscr{C}$, the following are equivalent.

  \ \ \ \ {\rm (i)} $C\in \mathcal{T}$.

  \ \ \ \ {\rm (ii)} $\mathbb{E}^i(C,\mathcal{T})=0$ for any $1\leq i \leq n-1$.

  \ \ \ \ {\rm (iii)} $\mathbb{E}^i(\mathcal{T},C)=0$ for any $1\leq i \leq n-1$.}
\end{definition}
Note that the $1$-cluster tilting subcategory is just the whole category $\mathscr{C}$.
So, we only consider the case for $n\geq 2$.
\begin{remark}
  {\rm\cite{HLN2}}\label{G1}
  Let $\mathcal{T}$ be an $n$-cluster tilting subcategory of an extriangulated category $\mathscr{C}$. Then we have that:

  {\rm (1)} For any $C\in \mathscr{C}$, there is a right $\mathcal{T}$-approximation $g_C\colon T_C\rightarrow C$ which is an $\mathfrak{s}$-deflation;

  {\rm (2)} Dually, any $A\in \mathscr{C}$ admits a left $\mathcal{T}$-approximation which is an $\mathfrak{s}$-inflation.
\end{remark}
\begin{definition}
  {\rm\cite[Definition 3.23]{HLN2}}\label{F9}
  {\em An $n$-cluster tilting subcategory $\mathcal{T}\subseteq \mathscr{C}$ is nicely embedded if the following conditions are satisfied.

  {\rm (1)} If $C\in \mathscr{C}$ satisfies $\mathbb{E}^{n-1}(\mathcal{T},C)=0$, then there is an $\mathfrak{s}$-triangle
  $$
    D\stackrel{q}\rightarrow P \rightarrow C \dashrightarrow
  $$ with $P\in \mathcal{P}$ such that
  $$\mathscr{C}(T,q)\colon \mathscr{C}(T,D)\rightarrow\mathscr{C}(T,P)$$
  is injective for any $T\in \mathcal{T}$.

  {\rm (2)} Dually, if $A\in \mathscr{C}$ satisfies $\mathbb{E}^{n-1}(A,\mathcal{T})=0$, then there is an $\mathfrak{s}$-triangle
  $$
    A\rightarrow I \stackrel{j}\rightarrow S \dashrightarrow
  $$ with $I\in \mathcal{I}$ such that
  $$\mathscr{C}(j,T)\colon \mathscr{C}(S,T)\rightarrow\mathscr{C}(I,T)$$
  is injective for any $T\in \mathcal{T}$.}
\end{definition}
We assume the following condition for the rest of the paper (see \cite[Condition 5.8]{NP}).

\noindent{\bf Condition} (WIC){\bf.}
  {\rm (1)} Let $f:X\rightarrow Y$ and  $g:Y\rightarrow Z$ be any pair of morphisms in $\mathscr{C}$. If $gf$ is an inflation, then $f$ is an inflation.

  {\rm (2)} Let $f:X\rightarrow Y$ and  $g:Y\rightarrow Z$ be any pair of morphisms in $\mathscr{C}$. If $gf$ is a deflation, then $g$ is a deflation.

\begin{definition}
  {\rm\cite[Definition 3.29]{HLN2}}\label{F10}
  {\em Let $\mathcal{T}\subseteq \mathscr{C}$ be an $n$-cluster tilting subcategory and $A,C\in\mathcal{T}$. Let $\delta\in\mathbb{E}^n(C,A)$. We say that an object $\langle X^{\bullet},\delta\rangle \in$\AE$_{(\mathcal{T},\mathbb{E}^n)}^{(n+2)}$
  \begin{equation}\label{fml1}
    A\stackrel{d^{0}_{X}}\longrightarrow X_{1}\stackrel{d^{1}_{X}}\longrightarrow X_{n-1}\stackrel{d^{2}_{X}}\longrightarrow \cdots \stackrel{d^{n-1}_{X}}\longrightarrow  X_{n}\stackrel{d^{n}_{X}}\longrightarrow C \stackrel{\delta}\dashrightarrow  \ \ \ \ (X_0=A,X_{n+1}=C,X_i\in \mathcal{T})
  \end{equation}
  is $\mathfrak{s}$-decomposable if it admits $\mathbb{E}$-triangles
  \begin{equation}\label{fml2}
    \begin{cases}
      A\stackrel{d^{0}_{X}}\longrightarrow X_{1}\stackrel{e^1}\longrightarrow M_1 \stackrel{\delta_{(1)}}\dashrightarrow,\\
      M_i\stackrel{m^{i}}\longrightarrow X_{i+1}\stackrel{e^{i+1}}\longrightarrow M_{i+1} \stackrel{\delta_{(i+1)}}\dashrightarrow \ \ (i=1,\cdots,n-2),   \\
      M_{n-1}\stackrel{m^{n-1}}\longrightarrow X_{n}\stackrel{d^n_X}\longrightarrow C \stackrel{\delta_{(n)}}\dashrightarrow
    \end{cases}
  \end{equation}
  satisfying $d^i_X=m^ie^i$ for $1\leq i\leq n-1$ and $\delta=\delta_{(1)}\cup\cdots\cup\delta_{(n)}$. We call (\ref{fml2}) an $\mathfrak{s}$-decomposition of (\ref{fml1}), as depicted specifically below.
    $$\xymatrix@=0.25cm{ &  &  &  &  &  &  &  &  & & & & & \\
    \ & \ & \ & \ &  & M_2\ar@{-->}^-{\delta_{(2)}}[ur]\ar[dr]^-{m^2} & & & & M_{n-1} \ar@{-->}^-{\delta_{(n-1)}}[ur]\ar[dr]^-{m^{n-1}} \\
   A \ar[rr]^-{d_X^0} & & X_1 \ar[rr]^-{d_X^1} \ar[dr]_-{e^1} &  & X_2 \ar[ur]^-{e^2} \ar[rr]^-{d_X^2} & & \cdots\cdots \ar[rr]^-{d_X^{n-2}} \ar[dr]_-{e^{n-2}} & & X_{n-2} \ar[ur]^-{e^{n-1}} \ar[rr]^-{d_X^{n-1}} & & X_n \ar[rr]^-{d_X^n} & & C \ar@{-->}^-{\delta_{(n)}}[rr] & & \ \\
   \ & \ & \ & M_1 \ar[ur]_-{m^1} \ar@{-->}_-{\delta_{(1)}}[dr] & & & & M_{n-2} \ar[ur]_-{m^{n-2}} \ar@{-->}_-{\delta_{(n-2)}}[dr] \\
    &  &  &  &  & & & & & & & &
   }$$}
\end{definition}

The examples of nicely embedded $n$-cluster tilting subcategories can be found in \cite[Example 3.24, Sections 4.2 and 4.3]{HLN2}. Assume that the extriangulated category $(\mathscr{C},\mathbb{E},\mathfrak{s})$ satisfies the Condition (WIC), and that its $n$-cluster tilting subcategory $\mathcal{T}\subseteq \mathscr{C}$ is nicely embedded. To endow $\mathcal{T}\subseteq \mathscr{C}$ with the structure of an $n$-exangulated category, for any $A,C\in\mathcal{T}$ and any $\delta\in\mathbb{E}^n(C,A)$, we define $\mathfrak{s}^n(\delta)=[ X^\bullet ]$ to be the homotopy equivalence class of $X^\bullet$ in $\mathscr{C}^{n+2}_{(\mathcal{T};A,C)}$, which gives an $\mathfrak{s}$-decomposable object $\langle X^{\bullet},\delta\rangle \in$\AE$_{(\mathcal{T},\mathbb{E}^n)}^{(n+2)}$. Then by \cite[Theorem 3.41]{HLN2}, $(\mathcal{T},\mathbb{E}^n,\mathfrak{s}^n)$ becomes an $n$-exangulated category.

\subsection{Almost $n$-exact structures}\label{AnS}

Through this subsection, let $(\mathscr{C}, \mathbb{E},\mathfrak{s})$ be an $n$-exangulated category. Assume that $\mathcal{T}$ is a full subcategory of $\mathscr{C}$ which is closed under extensions, i.e., for any $A,C\in \mathcal{T}$ and any $\delta\in \mathbb{E}(C,A)$, there exists an $\mathfrak{s}$-distinguished $n$-exangle $\langle X^{\bullet},\delta\rangle $ which satisfies $X_i\in\mathcal{T}$ for $1\leq i \leq n$. A class of distinguished $n$-exangles $\mathcal{F}$ in $\mathcal{T}$ is called an {\em almost} $n$-{\em exact structure} for $\mathcal{T}$ if it satisfies the following conditions:

{\rm (NE1)} $\mathcal{F}$ is closed under direct sums and contains all split $n$-exangles.

{\rm (NE2)} For any distinguished $n$-exangle $X_{0}\stackrel{d^{0}_{X}}\longrightarrow X_{1}\stackrel{d^{1}_{X}}\longrightarrow X_{2}\stackrel{d^{2}_{X}}\longrightarrow \cdots \stackrel{d^{n-1}_{X}}\longrightarrow  X_{n}\stackrel{d^{n}_{X}}\longrightarrow X_{n+1} \stackrel{\delta}\dashrightarrow$ in $\mathcal{F}$ and any morphism $f\colon Y_{n+1}\rightarrow X_{n+1}$, $f^*\delta\in\mathcal{F}$, i.e., we have the following commutative diagram
$$\xymatrix{
    X_0 \ar@{=}[d] \ar[r] & Y_1 \ar[d] \ar[r] & Y_2 \ar[d] \ar[r] & \cdots  \ar[r] & Y_n \ar[d] \ar[r] & Y_{n+1} \ar[d]_-{f} \ar@{-->}^{f^*\delta\in\mathcal{F}}[r] & \\
    X_0 \ar[r] & X_1 \ar[r] & X_2 \ar[r] & \cdots \ar[r] & X_n \ar[r] & X_{n+1} \ar@{-->}^{\delta\in\mathcal{F}}[r] & }$$

{\rm (NE3)} For any distinguished $n$-exangle $X_{0}\stackrel{d^{0}_{X}}\longrightarrow X_{1}\stackrel{d^{1}_{X}}\longrightarrow X_{2}\stackrel{d^{2}_{X}}\longrightarrow\cdots \stackrel{d^{n-1}_{X}}\longrightarrow  X_{n}\stackrel{d^{n}_{X}}\longrightarrow X_{n+1} \stackrel{\delta}\dashrightarrow$ in $\mathcal{F}$ and any $g\colon X_0\rightarrow Y_0$, then $g_*\delta\in\mathcal{F}$, i.e., we have the following commutative diagram
$$\xymatrix{
    X_0 \ar[d]^-{g} \ar[r] & X_1 \ar[d] \ar[r] & X_2 \ar[d] \ar[r] & \cdots  \ar[r] & X_n \ar[d] \ar[r] & X_{n+1} \ar@{=}[d] \ar@{-->}^{\delta\in\mathcal{F}}[r] & \\
    Y_0 \ar[r] & Y_1 \ar[r] & Y_2 \ar[r] & \cdots \ar[r] & Y_n \ar[r] & X_{n+1} \ar@{-->}^{g_*\delta\in\mathcal{F}}[r] & }
$$
By the definitions of additive subfunctor \cite[Subsection 3.2]{HLN1} and almost $n$-exact structure, we know that each almost $n$-exact structure $\mathcal{F}$ in $\mathscr{C}$ gives rise to an additive subfunctor $\mathbb{F}$ of $\mathbb{E}$. Conversely, any additive subfunctor $\mathbb{F}$ of $\mathbb{E}$ induces an almost $n$-exact structure. In the following, we give some examples about almost $n$-exact structures.
\begin{example}

  {\rm (i)} Let $(\mathscr{C}, \mathbb{E},\mathfrak{s})$ be an $n$-exangulated category and $\mathbb{F}$ be an additive subfunctor of $\mathbb{E}$. If $\mathbb{F}$ is closed in the sense of \cite[Definition 3.10]{HLN1}, then by \cite[Proposition 3.16]{HLN1}, $(\mathscr{C}, \mathbb{F},\mathfrak{s}|_{\mathbb{F}})$ is an $n$-exangulated category. If we take $\mathcal{F}$ to be the class of all distinguished $n$-$\mathbb{F}$-exangles, then $\mathcal{F}$ becomes an almost $n$-exact structure in $\mathscr{C}$.


  {\rm (ii)} Suppose that the additive category $\mathscr{C}$ is equipped with an automorphism $\Sigma\colon \mathscr{C}\stackrel{\cong}\rightarrow\mathscr{C}$. Then $\Sigma$ induces an additive bifunctor $\mathbb{E}_{\Sigma}=\mathscr{C}(-,\Sigma(-))\colon \mathscr{C}^{op}\times \mathscr{C}\rightarrow \mathsf{Ab}$. By \cite[Proposition 4.8]{HLN1}, define $\diamondsuit_{\mathfrak{s}}$ to be the class of $(n+2)$-$\Sigma$-sequences from distinguished $n$-exangles, then $(\mathscr{C},\mathbb{E}_{\Sigma},\diamondsuit_{\mathfrak{s}})$ becomes an $(n+2)$-angulated category. Consequently, the almost $n$-exact structure in $\mathscr{C}$ is the same as they defined in \cite{TWZ}.

  {\rm (iii)} Suppose that $(\mathscr{C}, \mathbb{E},\mathfrak{s})$ is an $n$-exangulated category in which any $\mathfrak{s}$-inflation is monomorphic and any $\mathfrak{s}$-deflation are epimorphic. Let $\mathcal{X}$ be the class of all $\mathfrak{s}$-conflations, then $(\mathscr{C},\mathcal{X})$ becomes an $n$-exact
  category (cf. \cite[Proposition 4.37]{HLN1}). In this case,  the almost $n$-exact structure in $\mathscr{C}$ is exactly the $n$-proper class in \cite[2.12]{AS}. For exact categories and abelian categories, see \cite[1.2]{DRSS}  and \cite{ASo}, respectively.
\end{example}

\section{Ideal Cotorsion Pairs and Higher Phantom Morphisms}
Let $(\mathscr{C}, \mathbb{E},\mathfrak{s})$ be an $n$-exangulated category. In this section, we introduce and study the ideal cotorsion pairs and higher phantom morphisms in $\mathscr{C}$.

\subsection{Precover and preenvelope ideals}
A {\em two sided ideal} $\mathfrak{I}$ of $\mathscr{C}$ is a subfunctor
$$
  \mathfrak{I}(-,-)\colon \mathscr{C}^{op}\times \mathscr{C} \rightarrow \mathbf{Ab}
$$
of the bifunctor $\mathscr{C}(-,-)$ that associates to every pair $(A,A')$, where $A,A'\in \mathscr{C}$, a subgroup $\mathfrak{I}(-,-)\subseteq \mathscr{C}(A,A')$ such that for any $f\colon X\rightarrow A, g\colon B\rightarrow Y$, $\mathscr{C}(f,g)(i)=gif\in\mathfrak{I}(X,Y)$ for any $i\in \mathfrak{I}(A,B)$. We call an object $A$ of $\mathscr{C}$ is in $\mathfrak{I}$ if the identity morphism $1_A\in \mathfrak{I}(A,A)$ and we define ${\bf Ob} \mathfrak{I}=\{ A\in\mathscr{C}~|~1_A\in\mathfrak{I} \} $.

Let $\mathfrak{I}$ be an ideal of $\mathscr{C}$ and $A$ be an object of $\mathscr{C}$. An $\mathfrak{I}$-{\em precover} of $A$ is a morphism $\phi\colon C\rightarrow A$ in $\mathfrak{I}$ such that for any $g\colon C'\rightarrow A$, $g$ factors through $\phi$, i.e., there exists $h\colon C'\rightarrow C$ such that $g=\phi h$.
The ideal $\mathfrak{I}$ is a {\em precovering ideal} if for any $A\in \mathscr{C}$, $A$ has an $\mathfrak{I}$-precover. Dually, we can define the notions of $\mathfrak{J}$-{\em preenvelope} and {\em preenveloping ideal}. Let $\mathcal{M}$ be a collection of morphisms in $\mathscr{C}$, we give the notions of left and right orthogonal ideals by setting
$$
  \mathcal{M}^{\perp }:= \{ g~|~\mathbb{E}(m,g)=0, \forall m\in \mathcal{M} \}
$$
and
$$
    ^{\perp }{\mathcal{M}}:= \{ f~|~\mathbb{E}(f,m)=0, \forall m\in \mathcal{M} \},
$$
respectively.

\begin{proposition}\label{Prop2}
  Let $\mathcal{M}$ be a collection of morphisms in $\mathscr{C}$, then both $\mathcal{M}^{\perp }$ and $^{\perp }{\mathcal{M}}$ are ideals of $\mathscr{C}$.
  \begin{proof}
    We just verify that $\mathcal{M}^{\perp}$ is an ideal. Take any distinguished $n$-exangle $X_{0}\longrightarrow X_{1}\longrightarrow X_{2}\longrightarrow \cdots \longrightarrow  X_{n}\longrightarrow X_{n+1} \stackrel{\delta}\dashrightarrow$. By the additvity of bifunctor $\mathbb{E}$, for any $g_1,g_2\colon X_0\rightarrow Y$ in $\mathcal{M}^{\perp}$ and $m\colon Y'\rightarrow X_{n+1}$, we have that \begin{flalign*}
      \mathbb{E}(m,g_1+g_2)(\delta)&=\mathbb{E}(m,Y)\circ\mathbb{E}(X_{n+1},g_1+g_2)(\delta)\\
      &=\mathbb{E}(m,Y)\circ(\mathbb{E}(X_{n+1},g_1)+\mathbb{E}(X_{n+1},g_2))(\delta)\\
      &=\mathbb{E}(m,g_1)(\delta)+\mathbb{E}(m,g_2)(\delta)=0,
    \end{flalign*} 
    which implies that $g_1+g_2\in \mathcal{M}^{\perp}$.

    It remains to show that $\mathbb{E}(m,g_1k)=0$ and $\mathbb{E}(m,hg_1)=0$ for any morphisms $h\colon Y\rightarrow V$ and $k\colon W\rightarrow X_0$. For any distinguished $n$-exangle $W\longrightarrow W_{1}\longrightarrow W_{2}\longrightarrow \cdots \longrightarrow  W_{n}\longrightarrow X_{n+1} \stackrel{\eta}\dashrightarrow$, we have that $\mathbb{E}(m,g_1k)(\eta)=\mathbb{E}(m,X_0)\circ\mathbb{E}(Y',g_1)\circ\mathbb{E}(Y',k)(\eta)=\mathbb{E}(m,g_1)\circ\mathbb{E}(Y',k)(\eta)=0$, which implies that $g_1k\in\mathcal{M}^{\perp}$. Similarly, we can show that $hg_1\in \mathcal{M}^{\perp}$. Then $\mathcal{M}^{\perp}$ is an ideal.
  \end{proof}
\end{proposition}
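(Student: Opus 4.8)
The plan is to prove that $\mathcal{M}^{\perp}$ is an ideal of $\mathscr{C}$; the assertion for $^{\perp}\mathcal{M}$ is then dual (interchange the two variables of $\mathbb{E}$ throughout, equivalently pass to $\mathscr{C}^{op}$ with its opposite $n$-exangulated structure), so it suffices to treat one side. Write $\mathfrak{I} := \mathcal{M}^{\perp}$. Recall what must be checked: (a) for each pair $A, B \in \mathscr{C}$ the set $\mathfrak{I}(A,B) = \{\, g \in \mathscr{C}(A,B) \mid \mathbb{E}(m,g) = 0 \text{ for all } m \in \mathcal{M}\,\}$ is a subgroup of $\mathscr{C}(A,B)$, and (b) $\mathfrak{I}$ is stable under pre- and post-composition by arbitrary morphisms of $\mathscr{C}$.

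For (a), fix $m : C' \to C$ in $\mathcal{M}$. Since each $\mathbb{E}(C,-) : \mathscr{C} \to \mathsf{Ab}$ is an additive functor it kills the zero morphism, so $0 \in \mathfrak{I}(A,B)$; and for $g_1, g_2 \in \mathfrak{I}(A,B)$ additivity in the second variable gives $\mathbb{E}(m, g_1 \pm g_2) = \mathbb{E}(m,g_1) \pm \mathbb{E}(m,g_2) = 0$, whence $g_1 \pm g_2 \in \mathfrak{I}(A,B)$. Concretely, as in the bifunctor calculation, one may verify this after applying both sides to an arbitrary extension $\delta \in \mathbb{E}(C, A)$ and expanding $\mathbb{E}(m,g_1+g_2)(\delta)=\mathbb{E}(m,B)\bigl(\mathbb{E}(A,g_1+g_2)(\delta)\bigr)$.

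For (b), the mechanism is simply that $\mathbb{E}$ is a bifunctor, so pushforwards compose functorially and commute with the pullback $m^{\ast}$. Let $g \in \mathfrak{I}(A,B)$ and let $k : W \to A$, $h : B \to V$ be arbitrary. For every $m : C' \to C$ in $\mathcal{M}$ and every $\delta \in \mathbb{E}(C, W)$ we get
\[
\mathbb{E}(m, gk)(\delta) = (gk)_{\ast} m^{\ast}\delta = g_{\ast}\, m^{\ast}(k_{\ast}\delta) = \mathbb{E}(m,g)(k_{\ast}\delta) = 0 ,
\]
and for every $\delta \in \mathbb{E}(C, A)$,
\[
\mathbb{E}(m, hg)(\delta) = (hg)_{\ast} m^{\ast}\delta = h_{\ast}\bigl( g_{\ast} m^{\ast}\delta \bigr) = h_{\ast}\bigl( \mathbb{E}(m,g)(\delta) \bigr) = 0 .
\]
Hence $gk, hg \in \mathfrak{I}$, and iterating, $h g k \in \mathfrak{I}$ for all $h,k$; together with (a) this is exactly the statement that $\mathfrak{I} = \mathcal{M}^{\perp}$ is an ideal, and the word-for-word dual argument (additivity and functoriality in the first variable of $\mathbb{E}$) handles $^{\perp}\mathcal{M}$.

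I do not expect a genuine obstacle: every step is immediate from $\mathbb{E}$ being an additive bifunctor. The only points deserving attention are the bookkeeping of variances — keeping straight which composite is a pullback and which is a pushforward, and that they commute — and remembering to record the zero morphism and closure under negation, so that one obtains a subgroup rather than merely a submonoid of each hom-group.
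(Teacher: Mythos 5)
Your proof is correct and follows essentially the same route as the paper: verify closure under addition via additivity of $\mathbb{E}$ in one variable, and closure under pre- and post-composition via functoriality of $\mathbb{E}$, then invoke duality for $^{\perp}\mathcal{M}$. If anything, your version is slightly cleaner in explicitly recording the zero morphism and closure under negation (so that one genuinely has a subgroup), which the paper's proof leaves implicit.
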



\begin{definition}\label{F11}
  {\em Let $\mathfrak{I}$ and $\mathfrak{J}$ be two ideals of $\mathscr{C}$. A pair $(\mathfrak{I},\mathfrak{J})$ is called an $n$-orthogonal pair of ideals if for any $f\in\mathfrak{I}$ and $g\in\mathfrak{J}$, $\mathbb{E}(f,g)=0$. A pair $(f,g)$ which satisfies the above condition is called an $n$-orthogonal pair of morphisms.}
\end{definition}

\begin{remark}
  {\rm (1)} Taken an ideal $\mathfrak{I}$ in $\mathscr{C}$, it is easy to construct two $n$-orthogonal pairs $(\mathfrak{I},\mathfrak{I}^{\perp})$ and $(\mathfrak{I},{^{\perp }\mathfrak{I}})$.

  {\rm (2)} The pair $(1_{X_0},g)$ is an $n$-orthogonal pair of morphisms if and only if $\mathbb{E}(X_0,g)=0$; Dually, the pair $(f,1_{X_{n+1}})$ is an $n$-orthogonal pair of morphisms if and only if $\mathbb{E}(f,X_{n+1})=0$; The pair $(1_{X_0},1_{X_{n+1}})$ is an $n$-orthogonal pair of morphisms if and only if $\mathbb{E}(X_0,X_{n+1})=0$.
\end{remark}

\begin{definition}
  {\em The $n$-orthogonal pair $(\mathfrak{I},\mathfrak{J})$ of ideals in $\mathscr{C}$ is called an $n$-ideal cotorsion pair if $\mathfrak{I}={^{\perp}\mathfrak{J}}$ and $\mathfrak{J}=\mathfrak{I}^{\perp}$.}
\end{definition}

Note that given ideals $\mathfrak{I}$ and $\mathfrak{J}$ in $\mathscr{C}$, in general, the $n$-orthogonal pairs $(\mathfrak{I},\mathfrak{I}^{\perp})$ and $(\mathfrak{I},{^{\perp }\mathfrak{I}})$ are not $n$-ideal cotorsion pairs unless $\mathfrak{I}={^{\perp}(\mathfrak{I}^{\perp})}$ and $\mathfrak{J}={^{\perp}(\mathfrak{J}^{\perp})}$, respectively. It is natural to hope that $(\mathfrak{I},\mathfrak{I}^{\perp})$ becomes an $n$-ideal cotorsion pair. For this purpose, we introduce the notion of special precovering ideals. This is an analogue of {\rm\cite[Section 1]{FGHT}}.

\begin{definition}\label{F12}
  {\em Let $\mathfrak{I}$ be an ideal of $\mathscr{C}$ and $C\in\mathscr{C}$ be any object. A morphism $i\colon Y_n\rightarrow C$ in $\mathfrak{I}$ is called a special $\mathfrak{I}$-precover of $C$ if it is the $\mathfrak{s}$-deflation of distinguished $n$-exangle which realizing $\mathbb{E}(C,j)(\delta)$ for some $\delta\in\mathbb{E}(C,A)$ and $j\colon A\rightarrow A'$ in $\mathfrak{I}^{\perp}$, i.e., we have the following commutative diagram
  $$
    \xymatrix{
      A \ar[d]_-{j\in\mathfrak{I}^{\perp}} \ar[r] & X_1 \ar[d] \ar[r] & X_2 \ar[d] \ar[r] & \cdots  \ar[r] & X_n \ar[d] \ar[r] & C \ar@{=}[d] \ar@{-->}^{\delta}[r] & \\
      A' \ar[r] & Y_1 \ar[r] & Y_2 \ar[r] & \cdots \ar[r] & Y_n \ar[r]^{i} & C \ar@{-->}^{j_*\delta}[r] & . }
  $$
  The ideal $\mathfrak{I}$ is called a special precovering ideal if each object $C\in\mathscr{C}$ has a special $\mathfrak{I}$-precover. Dually, we can define the special $\mathfrak{J}$-preenvelopes and special preenveloping ideals.}
\end{definition}

\begin{definition}\label{F13}
  {\em An $n$-ideal cotorsion pair $(\mathfrak{I},\mathfrak{J})$ is called complete if for any $C\in \mathscr{C}$, $C$ has a special $\mathfrak{I}$-precover and a special $\mathfrak{J}$-preenvelope.}
\end{definition}

\begin{proposition}\label{Prop3}
  If $i$ is a special $\mathfrak{I}$-precover (resp. special $\mathfrak{J}$-preenvelope), then $i$ is a $\mathfrak{I}$-precover (resp. $\mathfrak{J}$-preenvelope).
\end{proposition}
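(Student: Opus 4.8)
The plan is to extract the required factorization directly from the exactness built into the distinguished $n$-exangle that witnesses $i$ being a special $\mathfrak{I}$-precover. Recall from Definition~\ref{F12} that a special $\mathfrak{I}$-precover $i\colon Y_n\to C$ is the $\mathfrak{s}$-deflation of a distinguished $n$-exangle
$$A'\longrightarrow Y_1\longrightarrow \cdots \longrightarrow Y_n\stackrel{i}\longrightarrow C\stackrel{j_*\delta}\dashrightarrow$$
realizing $j_*\delta=\mathbb{E}(C,j)(\delta)$ for some $\delta\in\mathbb{E}(C,A)$ and some $j\colon A\to A'$ in $\mathfrak{I}^{\perp}$, and that $i\in\mathfrak{I}$. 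Thus it remains only to show that every $g\in\mathfrak{I}$ with codomain $C$ factors through $i$.

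So I would fix such a $g\colon C'\to C$ and pull the witnessing extension back along it. Since $j\in\mathfrak{I}^{\perp}$ and $g\in\mathfrak{I}$, $n$-orthogonality gives $\mathbb{E}(g,j)=0$ as a homomorphism $\mathbb{E}(C,A)\to\mathbb{E}(C',A')$; evaluating at $\delta$ and using biadditivity of $\mathbb{E}$ yields $g^{*}(j_*\delta)=\mathbb{E}(g,j)(\delta)=0$. Equivalently, $g$ lies in the kernel of $\big((j_*\delta)_{\sharp}\big)_{C'}\colon \mathscr{C}(C',C)\to \mathbb{E}(C',A')$.

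Next I would invoke condition (1) of Definition~\ref{F3}: the bottom row above is a distinguished $n$-exangle, hence an $n$-exangle, so the sequence of abelian groups
$$\mathscr{C}(C',Y_n)\stackrel{\mathscr{C}(C',i)}\longrightarrow \mathscr{C}(C',C)\stackrel{(j_*\delta)_{\sharp}}\longrightarrow \mathbb{E}(C',A')$$
is exact. Therefore $g\in\Ker (j_*\delta)_{\sharp}=\im \mathscr{C}(C',i)$, i.e.\ $g=i\circ h$ for some $h\colon C'\to Y_n$; this is exactly the factorization needed, so $i$ is an $\mathfrak{I}$-precover. The statement for a special $\mathfrak{J}$-preenvelope is proved dually, using condition (2) of Definition~\ref{F3} together with the orthogonality of ${}^{\perp}\mathfrak{J}$ against $\mathfrak{J}$ (which makes $g_*(q^{*}\delta)=0$ for the relevant morphism $q\in{}^{\perp}\mathfrak{J}$). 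I do not expect a genuine obstacle here; the only point requiring care is the variance — since we must factor morphisms \emph{into} $C$, it is the contravariant exact sequence attached to the $n$-exangle that must be used, and the vanishing that feeds it, $g^{*}(j_*\delta)=0$, is exactly where the hypotheses $g\in\mathfrak{I}$ and $j\in\mathfrak{I}^{\perp}$ enter.
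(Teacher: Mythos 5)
Your proof is correct and rests on the same key observation as the paper's: for $g\in\mathfrak{I}$ and $j\in\mathfrak{I}^{\perp}$ one has $\mathbb{E}(g,j)=0$, hence $g^{*}(j_{*}\delta)=0$. Where the paper then forms the pullback of the witnessing $n$-exangle along $g$, observes that the pulled-back extension $g^{*}(j_{*}\delta)=0$ makes that row split (Lemma~\ref{lemma4}), and composes the resulting section with the lift to obtain the factorization, you instead read the factorization off directly from the exactness of
$$\mathscr{C}(C',Y_n)\xrightarrow{\mathscr{C}(C',i)}\mathscr{C}(C',C)\xrightarrow{(j_{*}\delta)_{\sharp}}\mathbb{E}(C',A')$$
guaranteed by condition~(1) of Definition~\ref{F3}. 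Your route is marginally more economical: it bypasses the pullback construction (which implicitly uses the lifting axiom (R0)) and the splitting criterion, invoking only the defining exactness of an $n$-exangle. The paper's argument is perhaps more in keeping with diagrammatic reasoning used elsewhere in the text, but mathematically the two are equivalent and both are fully correct, with the dual statement handled analogously via condition~(2) of Definition~\ref{F3}.
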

\begin{proof}
We only prove the case of special $\mathfrak{I}$-precover. The proof of special $\mathfrak{J}$-preenvelope is dual. Since $i$ is a special $\mathfrak{I}$-precover, there exists $\delta\in\mathbb{E}(C,A)$ and $j\in\mathfrak{I}^{\perp}$ such that we have the upper half part of the following commutative diagram:
      $$\xymatrix{
        A \ar[d]_-{j} \ar[r] & X_1 \ar[d] \ar[r] & X_2 \ar[d] \ar[r] & \cdots  \ar[r] & X_n \ar[d] \ar[r] & C \ar@{=}[d] \ar@{-->}^{\delta}[r] & \\
        A' \ar[r] & Y_1 \ar[r] & Y_2 \ar[r] & \cdots \ar[r] & Y_n \ar[r]^{i} & C \ar@{-->}^{j_*\delta}[r] & \\
        A' \ar@{=}[u] \ar[r] & Z_1 \ar[u]^-{f^1} \ar[r] & Z_2 \ar[u]^-{f^2} \ar[r] & \cdots \ar[r] & Z_n  \ar[u]^-{f^n} \ar[r]^-{d_Z^n} & Y \ar[u]^-{f} \ar@{-->}^-{f^*j_*\delta}[r] & . }$$
    For any morphism $f\colon Y\rightarrow C$ in $\mathfrak{I}$, $\mathbb{E}(f,j)=0$, thus $f^*j_*\delta=0$. So the third row in the above diagram splits. By Lemma \ref{lemma4}, there exists $r\colon Y\rightarrow Z_n$ such that $d_Z^n r=\Id$. It follows that $if^nr=f$,
   which proves that $i$ is a $\mathfrak{I}$-precover.
\end{proof}


\begin{theorem}\label{Thm2}
  Let $\mathfrak{I}$ be a special $\mathfrak{I}$-precover of $\mathscr{C}$, then the $n$-orthogonal pair of ideals $(\mathfrak{I},\mathfrak{I}^{\perp})$ is an $n$-ideal cotorsion pair.
  \begin{proof}
    We just need to show that ${^{\perp}(\mathfrak{I}^{\perp})}\subseteq\mathfrak{I}$. Take any morphism $i'\colon C'\rightarrow C$ in ${^{\perp}(\mathfrak{I}^{\perp})}$, since $\mathfrak{I}$ is special precovering,  there exists $\delta\in\mathbb{E}(C,A)$ and $j\in\mathfrak{I}^{\perp}$ such that we have the following commutative diagram
      $$\xymatrix{
        A \ar[d]_-{j} \ar[r] & X_1 \ar[d] \ar[r] & X_2 \ar[d] \ar[r] & \cdots  \ar[r] & X_n \ar[d] \ar[r] & C \ar@{=}[d] \ar@{-->}^{\delta}[r] & \\
        A' \ar[r] & Y_1 \ar[r] & Y_2 \ar[r] & \cdots \ar[r] & Y_n \ar[r]^-{i} & C \ar@{-->}^{j_*\delta}[r] & ,
      }$$
    where $i\colon Y_n\rightarrow C$ is an $\mathfrak{I}$-precover by Proposition \ref{Prop3}. Note that $\mathbb{E}(i',j)=0$, since $i'\in{^{\perp}(\mathfrak{I}^{\perp})}$. Thus, we get that
    $$
      \mathbb{E}(i',A')(j_*\delta)=\mathbb{E}(i',A')\circ\mathbb{E}(C,j)(\delta)=\mathbb{E}(i',j)(\delta)=0.
    $$
    Hence, there exists $h\colon C'\rightarrow Y_n$ such that $i'=ih$.
    Therefore, $i'\in \mathfrak{I}$.
  \end{proof}
\end{theorem}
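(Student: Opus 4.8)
The plan is to establish the two identities defining an $n$-ideal cotorsion pair for the pair $(\mathfrak{I},\mathfrak{I}^{\perp})$. Writing it as $(\mathfrak{I},\mathfrak{J})$ with $\mathfrak{J}=\mathfrak{I}^{\perp}$, the identity $\mathfrak{J}=\mathfrak{I}^{\perp}$ holds by construction, so the entire content is the equality $\mathfrak{I}={^{\perp}(\mathfrak{I}^{\perp})}$. One inclusion is automatic: $(\mathfrak{I},\mathfrak{I}^{\perp})$ is an $n$-orthogonal pair of ideals by the very definition of $\mathfrak{I}^{\perp}$, so every $f\in\mathfrak{I}$ satisfies $\mathbb{E}(f,g)=0$ for all $g\in\mathfrak{I}^{\perp}$, i.e. $f\in{^{\perp}(\mathfrak{I}^{\perp})}$. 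Hence the real work is to prove ${^{\perp}(\mathfrak{I}^{\perp})}\subseteq\mathfrak{I}$, and the special precovering hypothesis will be used exactly once, to supply a witnessing distinguished $n$-exangle.

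First I would fix a morphism $i'\colon C'\rightarrow C$ in ${^{\perp}(\mathfrak{I}^{\perp})}$. Since $\mathfrak{I}$ is a special precovering ideal, $C$ has a special $\mathfrak{I}$-precover: there exist $\delta\in\mathbb{E}(C,A)$, a morphism $j\colon A\rightarrow A'$ in $\mathfrak{I}^{\perp}$, and a distinguished $n$-exangle
\[
A'\rightarrow Y_1\rightarrow\cdots\rightarrow Y_n\stackrel{i}{\longrightarrow} C\stackrel{j_*\delta}{\dashrightarrow}
\]
whose $\mathfrak{s}$-deflation $i\colon Y_n\rightarrow C$ lies in $\mathfrak{I}$. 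The next step is a short bifunctoriality computation: since $j_*\delta=\mathbb{E}(C,j)(\delta)$,
\[
\mathbb{E}(i',A')(j_*\delta)=\mathbb{E}(i',A')\circ\mathbb{E}(C,j)(\delta)=\mathbb{E}(i',j)(\delta)=0,
\]
the last equality holding because $i'\in{^{\perp}(\mathfrak{I}^{\perp})}$ and $j\in\mathfrak{I}^{\perp}$ force $\mathbb{E}(i',j)=0$. Thus $(i')^{*}(j_*\delta)=0$.

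Finally I would invoke the contravariant exactness built into the $n$-exangle $\langle Y^{\bullet},j_*\delta\rangle$ (Definition \ref{F3}): evaluating the exact sequence of functors $\mathscr{C}^{op}\rightarrow\mathsf{Ab}$ at $C'$ gives
\[
\mathscr{C}(C',Y_n)\stackrel{\mathscr{C}(C',i)}{\longrightarrow}\mathscr{C}(C',C)\stackrel{(j_*\delta)_{\sharp}}{\longrightarrow}\mathbb{E}(C',A').
\]
Since $(j_*\delta)_{\sharp}(i')=(i')^{*}(j_*\delta)=0$, the morphism $i'$ factors through $i$, say $i'=i\circ h$ with $h\colon C'\rightarrow Y_n$. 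As $i\in\mathfrak{I}$ and $\mathfrak{I}$ is a two-sided ideal, $i'=i\circ h=\mathscr{C}(h,C)(i)\in\mathfrak{I}$, so ${^{\perp}(\mathfrak{I}^{\perp})}\subseteq\mathfrak{I}$, which finishes the proof.

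I do not expect a genuine obstacle here: the argument is the ideal-theoretic analogue of the classical Salce-type factorization. The only points that require attention are keeping track of which inclusion is free (so that the special precovering hypothesis is invoked solely to produce the $n$-exangle whose left-hand leg $j$ lies in $\mathfrak{I}^{\perp}$), and applying the exactness of the $n$-exangle at the specific object $C'$ — this is precisely what upgrades the vanishing $(i')^{*}(j_*\delta)=0$ to the needed factorization $i'=ih$. Some care is also needed with the variance conventions, since $j$ is pushed forward along the left-hand object while $i'$ is pulled back along the right-hand object of the relevant extension.
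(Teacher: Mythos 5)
Your argument is correct and follows the paper's proof essentially step for step: fix $i'\in{^{\perp}(\mathfrak{I}^{\perp})}$, produce the special $\mathfrak{I}$-precover $i$ of $C$ together with $j\in\mathfrak{I}^{\perp}$, compute $(i')^{*}(j_*\delta)=\mathbb{E}(i',j)(\delta)=0$, and use exactness of the $n$-exangle at $C'$ to factor $i'=ih$ through $i\in\mathfrak{I}$. The only (harmless) divergence is presentational: you appeal directly to the exactness condition in Definition \ref{F3} rather than citing Proposition \ref{Prop3}, and you spell out the trivial inclusion $\mathfrak{I}\subseteq{^{\perp}(\mathfrak{I}^{\perp})}$ that the paper leaves implicit.
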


\subsection{Higher phantom morphisms in $n$-exangulated categories}
In the subsection, we introduce and study the basic properties of higher phantom morphisms. 

Throughout the subsection, $\mathbb{F}$ denotes an additive subfunctor of $\mathbb{E}$. By the discussion in subsection \ref{AnS}, we have a one-to-one correspondence between the additive subfunctors and almost $n$-exact structures. Hence we also use $\mathbb{F}$ to denote an almost $n$-exact structure. For any $\mathfrak{s}(\delta)=[ A\stackrel{d^{0}_{X}}\longrightarrow X_{1}\stackrel{d^{1}_{X}}\longrightarrow \cdots \stackrel{d^{n-2}_{X}}\longrightarrow X_{n-1}\stackrel{d^{n-1}_{X}}\longrightarrow X_{n}\stackrel{d^{n}_{X}}\longrightarrow C]$ with $\delta\in \mathbb{F}(C,A)$, $d^0_X$ and $d^n_X$ are called $\mathfrak{s}|_\mathbb{F}$-{\em inflation} and $\mathfrak{s}|_\mathbb{F}$-{\em deflation}, respectively. For the simplicity of notation, we write $\mathfrak{s}|_\mathbb{F}$ as $\mathbb{F}$ and call them $\mathbb{F}$-{\em inflations} and $\mathbb{F}$-{\em deflations}, respectively.

\begin{definition}\label{F14}
  {\em Given an additive subfunctor $\mathbb{F}$ of $\mathbb{E}$, a morphism $\phi\colon X\rightarrow C $ is called an $n$-$\mathbb{F}$-phantom-morphism if for any $\delta\in \mathbb{E}(C,A)$, we have that $\phi^*\delta\in \mathbb{F}(X,A)$. It can be depicted by the following commutative diagram
    $$\xymatrix{
      A \ar@{=}[d] \ar[r] & Y_1 \ar[d] \ar[r] & Y_2 \ar[d] \ar[r] & \cdots  \ar[r] & Y_n \ar[d] \ar[r] & X \ar[d]^-{\phi} \ar@{-->}^{\phi^*\delta\in\mathbb{F}}[r] & \\
      A \ar[r] & X_1 \ar[r] & X_2 \ar[r] & \cdots \ar[r] & X_n \ar[r] & C \ar@{-->}^-{\delta}[r] & .
    }$$
  The collection of all $n$-$\mathbb{F}$-phantom-morphisms is denoted to ${\bf Ph}(\mathbb{F})$. The $n$-$\mathbb{F}$-cophantom-morphisms and ${\bf CoPh}(\mathbb{F})$ are defined dually.}
\end{definition}

We can check that ${\bf Ph}(\mathbb{F})$ and ${\bf CoPh}(\mathbb{F})$ are ideals of $\mathscr{C}$: (i) For any morphism $\phi\colon X\rightarrow C$ in ${\bf Ph}(\mathbb{F})$, $\delta\in \mathbb{E}(W,A)$ and $f\in \mathscr{C}(C,W)$, we have that $\phi^*f^*\delta=(f\phi)^*\delta\in\mathbb{F}(X,A)$. Hence, $f\phi\in {\bf Ph}(\mathbb{F})$; (ii) for any $\rho\in \mathbb{E}(C,A)$ and $g\in \mathscr{C}(V,X)$, we have that $(\phi g)^*\rho=g^*\phi^*\rho$. Since $\phi\in {\bf Ph}(\mathbb{F})$, $\phi^*\rho\in \mathbb{F}(X,A)$, we get that $(\phi g)^*\rho\in \mathbb{F}(V,A)$; (iii) By the additivity of $\mathbb{F}$, it is easy to see that $\phi+\psi \in {\bf Ph}(\mathbb{F})$, if $\phi,\psi\in\mathscr{C}(X,C)\cap {\bf Ph}(\mathbb{F})$.

Given an ideal $\mathfrak{I}$, define {\bf PB}$(\mathfrak{I})$ to be a collection of distinguished $n$-exangles that realizes the $\mathbb{E}(f,A)(\delta)$ for some $\delta\in\mathbb{E}(C,A)$ with $C,A\in\mathscr{C}$ and morphism $f$ in $\mathfrak{I}$.

\begin{proposition}\label{Prop4}
For any ideal $\mathfrak{I}$ of $\mathcal{T}$, ${\bf PB}(\mathfrak{I})$ is an almost $n$-exact structure in $\mathscr{C}$.
  \begin{proof}
    By the definition of almost $n$-exact structures, we need to verify ${\bf PB}(\mathfrak{I})$ contains all split $n$-exangles, is closed under isomorphisms and direct sums, and satisfies {\rm (NE2)} and {\rm (NE3)}.

    (i) Consider two isomorphic distinguished $n$-exangles which realize $\delta\in\mathbb{E}(X,A)$ and $\rho\in\mathbb{E}(Y,B)$, respectively. If the distinguished $n$-exangle realizing $\delta$ belongs to ${\bf PB}(\mathfrak{I})$, then there exists $\eta\in\mathbb{E}(C,A)$ and $f\in\mathscr{C}(X,C)$ such that $f^*\eta=\delta$. Thus, $(fg)^*(a_*\eta)=\rho$ and the distinguished $n$-exangle realizing $\rho$ belongs to ${\bf PB}(\mathfrak{I})$.
      $$\xymatrix{
        B \ar[d]_-{\wr} \ar[r] & Z_1 \ar[d]_-{\wr} \ar[r] & Z_2 \ar[d]_-{\wr} \ar[r] & \cdots \ar[r] & Z_n \ar[d]_-{\wr} \ar[r] & Y \ar[d]_-{\wr}^-{g} \ar@{-->}^{\rho}[r] & \\
        A \ar@{=}[d] \ar[r] & Y_1 \ar[d] \ar[r] & Y_2 \ar[d] \ar[r] & \cdots \ar[r] & Y_n \ar[d] \ar[r] & X \ar[d]_-{f} \ar@{-->}^{\delta}[r] & \\
        A \ar[d]_-{\wr}^-{a} \ar[r] & X_1 \ar[d] \ar[r] & X_2 \ar[d] \ar[r] & \cdots \ar[r] & X_n \ar[d] \ar[r] & C \ar@{=}[d] \ar[r] \ar@{-->}^{\eta}[r] & \\
        B \ar[r] & U_1 \ar[r] & U_2 \ar[r] & \cdots \ar[r] & U_n \ar[r] & C \ar@{-->}^-{a_*\eta}[r] & .
      }$$

    (ii) For a split $n$-exangle
    $$
      A\longrightarrow X_{1}\longrightarrow X_2 \longrightarrow \cdots \longrightarrow X_{n}\longrightarrow C \stackrel{\delta}\dashrightarrow,
    $$
    we take the zero morphism $0\colon C\rightarrow C$ in $\mathfrak{I}$, then $0^*(\delta)=\delta$. Hence, ${\bf PB}(\mathfrak{I})$ contains all split $n$-exangles.

    (iii) For any two distinguished $n$-exangles in ${\bf PB}(\mathfrak{I})$ which realize $\delta$ and $\delta'$, respectively, there exists $\mathfrak{s}$-extensions $\eta$, $\eta'$ and morphisms $i\colon C\rightarrow B$, $i'\colon C'\rightarrow B'$ in $\mathfrak{I}$ such that $\mathbb{E}(i,A)(\eta)=\delta$ and $\mathbb{E}(i',A')(\eta')=\delta'$, respectively. Then the direct sum $\delta\oplus\delta'$ of $\delta$ and $\delta'$ is denoted by $\mathbb{E}(\begin{bmatrix} i & 0\\ 0 & i' \end{bmatrix}, A\oplus A')(\eta\oplus\eta')$. This shows that ${\bf PB}(\mathfrak{I})$ is closed under direct sums.

    (iv) For any $\mathfrak{s}$-conflation $A\longrightarrow X_{1}\longrightarrow X_{2}\longrightarrow\cdots \longrightarrow  X_{n}\longrightarrow C \stackrel{\delta}\dashrightarrow$ in ${\bf PB}(\mathfrak{I})$, there exists $A\longrightarrow Y_{1}\longrightarrow Y_2 \longrightarrow \cdots \longrightarrow Y_{n}\longrightarrow X \stackrel{\eta}\dashrightarrow$ and $i\colon C\rightarrow X\in\mathfrak{I}$ such that $\mathbb{E}(i,A)(\eta)=\delta$. For any $f\colon W\rightarrow C$, we have $\mathbb{E}(f,A)\circ\mathbb{E}(i,A)(\eta)=\mathbb{E}(if,A)(\eta)$. This shows {\rm (NE2)}.

    (v) We use the same assumption in (iv), i.e. $\mathbb{E}(i,A)(\eta)=\delta$. For any $g\colon A\rightarrow V$, we have
    $$
      \mathbb{E}(C,g)(\delta)=\mathbb{E}(X,g)\circ\mathbb{E}(i,A)(\eta)=\mathbb{E}(i,V)(\mathbb{E}(X,g)(\eta)).
    $$
    This shows {\rm (NE3)}.
  \end{proof}
\end{proposition}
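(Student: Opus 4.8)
The plan is to check directly that ${\bf PB}(\mathfrak{I})$ satisfies every requirement in the definition of an almost $n$-exact structure: it contains all split $n$-exangles, it is closed under isomorphisms and finite direct sums, and it verifies (NE2) and (NE3). Everything will come from two ingredients alone --- the two-variable functoriality and biadditivity of $\mathbb{E}$, and the defining properties of an ideal, namely that $\mathfrak{I}$ is closed under arbitrary pre- and post-composition and that each $\mathfrak{I}(A,B)$ is a subgroup of $\mathscr{C}(A,B)$. Throughout I will use the reformulation: a distinguished $n$-exangle belongs to ${\bf PB}(\mathfrak{I})$ precisely when the extension $\delta\in\mathbb{E}(C,A)$ it realizes can be written as $\delta=\mathbb{E}(i,A)(\eta)=i^{*}\eta$ for some $\eta\in\mathbb{E}(X,A)$ and some $i\colon C\to X$ in $\mathfrak{I}$. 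Since $\mathfrak{s}$ is an exact realization, each extension we produce below ($i^{*}\eta$, $\eta\oplus\eta'$, $g_{*}\eta$, and so on) is automatically realized by a distinguished $n$-exangle, so it is harmless to argue at the level of extensions.

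First I would treat the conditions in (NE1). For a split $n$-exangle the realized extension is $0\in\mathbb{E}(C,A)$ by Lemma \ref{lemma4}; taking $i=0_{C}\in\mathfrak{I}(C,C)$ we have $i^{*}(\eta)=0$ for any $\eta$, so every split $n$-exangle lies in ${\bf PB}(\mathfrak{I})$. For closure under direct sums, suppose $\delta=i^{*}\eta$ and $\delta'=(i')^{*}\eta'$ with $i\colon C\to X$ and $i'\colon C'\to X'$ in $\mathfrak{I}$. Under the natural decomposition of $\mathbb{E}(C\oplus C',A\oplus A')$ one identifies $\delta\oplus\delta'$ with $\mathbb{E}(\begin{bmatrix} i & 0\\ 0 & i'\end{bmatrix},A\oplus A')(\eta\oplus\eta')$; and $\begin{bmatrix} i & 0\\ 0 & i'\end{bmatrix}$ lies in $\mathfrak{I}$, being a sum of composites of $i,i'$ with split inclusions and projections, so $\delta\oplus\delta'$ yields an $n$-exangle in ${\bf PB}(\mathfrak{I})$. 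Closure under isomorphism is the same computation: if an $n$-exangle realizing $\rho$ is isomorphic to one realizing $\delta=i^{*}\eta\in{\bf PB}(\mathfrak{I})$, then chasing the isomorphism --- with component $a$ on the inflation side and $g$ on the deflation side --- one rewrites $\rho$ as $(ig)^{*}(a_{*}\eta)$, and $ig\in\mathfrak{I}$ since $\mathfrak{I}$ is an ideal.

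For (NE2), take $\delta=i^{*}\eta$ with $i\colon C\to X$ in $\mathfrak{I}$ and an arbitrary $f\colon W\to C$. Functoriality of $\mathbb{E}(-,A)$ gives $f^{*}\delta=f^{*}i^{*}\eta=(if)^{*}\eta$, and $if\in\mathfrak{I}$ because $\mathfrak{I}$ is closed under precomposition, so $f^{*}\delta\in{\bf PB}(\mathfrak{I})$. For (NE3), with the same $\delta=\mathbb{E}(i,A)(\eta)$ and an arbitrary $g\colon A\to V$, the commutativity of the bifunctor $\mathbb{E}$ gives $\mathbb{E}(C,g)\circ\mathbb{E}(i,A)=\mathbb{E}(i,V)\circ\mathbb{E}(X,g)$, hence $g_{*}\delta=\mathbb{E}(C,g)(\delta)=\mathbb{E}(i,V)(\mathbb{E}(X,g)(\eta))=\mathbb{E}(i,V)(g_{*}\eta)$, which exhibits $g_{*}\delta\in{\bf PB}(\mathfrak{I})$ since $i\in\mathfrak{I}$.

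I do not anticipate a genuine obstacle here: the statement is essentially bookkeeping with the two-variable functoriality of $\mathbb{E}$ and the ideal axioms. The only points that deserve a line of justification are the elementary fact that an ideal, being an additive subfunctor of $\mathscr{C}(-,-)$, is closed under finite direct sums of its morphisms (used in the direct-sum and isomorphism steps), and the observation that, because $\mathfrak{s}$ is an exact realization, each extension constructed is genuinely realized by a distinguished $n$-exangle, so that we really land inside the class ${\bf PB}(\mathfrak{I})$ rather than merely among abstract extensions.
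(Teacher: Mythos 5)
Your proof is correct and takes essentially the same route as the paper's: the zero morphism for split $n$-exangles, the block-diagonal matrix for direct sums, and the bifunctor identities for (NE2) and (NE3). If anything, your version is slightly cleaner in the split case, where you explicitly invoke Lemma~\ref{lemma4} to observe $\delta=0$ before applying $0_C^*$, whereas the paper writes ``$0^*(\delta)=\delta$'' and leaves that point implicit.
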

For the almost $n$-exact structure ${\bf PB}(\mathfrak{I})$ in Proposition \ref{Prop4}, we have the corresponding subfunctor, which is also denoted by ${\bf PB}(\mathfrak{I})$.

\begin{definition}\label{F15}
  {\em A morphism $f\colon X\rightarrow C$ in $\mathscr{C}$ is called $\mathbb{F}$-projective if for every object $A$ in $\mathscr{C}$, $\mathbb{F}(f,A)=0$, i.e., we have the following morphism of distinguished $n$-exangles
    $$\xymatrix{
      A \ar@{=}[d] \ar[r] & Y_1 \ar[d] \ar[r] & Y_2 \ar[d] \ar[r] & \cdots  \ar[r] & Y_n \ar[d] \ar[r] & X \ar[d]^-{f} \ar@{-->}^{f^*\delta=0}[r] & \\
      A \ar[r] & X_1 \ar[r] & X_2 \ar[r] & \cdots \ar[r] & X_n \ar[r] & C \ar@{-->}^-{\delta}[r] & .
    }$$
An object $C$ in $\mathscr{C}$ is called $\mathbb{F}$-projective if $1_C$ is an $\mathbb{F}$-projective morphism. It is easy to see that the collection of all $\mathbb{F}$-projective morphisms is an ideal, denoted by $\mathbb{F}$-{\bf proj}. Similarly, the notions of $\mathbb{F}$-injective morphisms and $\mathbb{F}$-injective objects are defined dually. The ideal of $\mathbb{F}$-injective morphisms is denoted by $\mathbb{F}$-{\bf inj}.}
\end{definition}

Let $\mathfrak{I}$ be an ideal of $\mathcal{T}$ and set $\mathbb{F}={\bf PB}(\mathfrak{I})$. Noting that $j\in\mathfrak{I}^{\perp}\Leftrightarrow \mathbb{E}(i,j)=0$ for any $i\in\mathfrak{I} \Leftrightarrow j\in {\bf PB}(\mathfrak{I})$-${\bf inj}$, we have that $\mathbb{F}$-${\bf inj}=\mathfrak{I}^{\perp}$.

\begin{proposition}\label{Prop5}
  For any special precovering ideal $\mathfrak{I}$, $\mathfrak{I}$ equals to the ideal ${\bf Ph}({\bf PB}(\mathfrak{I}))$ of $n$-${\bf PB}(\mathfrak{I})$-phantom morphisms.
  \begin{proof}
    By the definition of ${\bf PB}(\mathfrak{I})$, it is easy to see that each $i\in\mathfrak{I}$ is an $n$-${\bf PB}(\mathfrak{I})$-phantom morphism. It remains to prove each $n$-${\bf PB}(\mathfrak{I})$-phantom morphism $i$ belongs to $\mathfrak{I}$. For any distinguished $n$-exangle $A\longrightarrow X_{1}\longrightarrow X_2 \longrightarrow \cdots \longrightarrow X_{n}\longrightarrow C \stackrel{\delta}\dashrightarrow$, we have that $\mathbb{E}(i,A)(\delta)\in {\bf PB}(\mathfrak{I})$.
    For any $j\in\mathfrak{I}^{\perp}$, then by the discussion above we have $j\in{\bf PB}(\mathfrak{I})$-{\bf inj}. Hence $\mathbb{E}(i,j)(\delta)=0$, this shows that $i\in {^{\perp}(\mathfrak{I}^{\perp})}$. Since by Theorem \ref{Thm2}, ${^{\perp}(\mathfrak{I}^{\perp})}=\mathfrak{I}$, then $i\in\mathfrak{I}$, as we desired.
  \end{proof}
\end{proposition}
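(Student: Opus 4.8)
The plan is to prove the two inclusions $\mathfrak{I}\subseteq {\bf Ph}({\bf PB}(\mathfrak{I}))$ and ${\bf Ph}({\bf PB}(\mathfrak{I}))\subseteq \mathfrak{I}$ separately, the first being immediate from the definitions and the second requiring the special precovering hypothesis together with Theorem \ref{Thm2}. For the forward inclusion, I would simply observe that if $i\in\mathfrak{I}$ and $\delta\in\mathbb{E}(C,A)$ is any extension with target $C$ (the source of $i$), then $\mathbb{E}(i,A)(\delta)$ is by construction precisely an extension realized by a distinguished $n$-exangle lying in ${\bf PB}(\mathfrak{I})$ — this is the defining property of the collection ${\bf PB}(\mathfrak{I})$. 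Hence $i^*\delta\in {\bf PB}(\mathfrak{I})$ for all such $\delta$, which is exactly the condition that $i$ be an $n$-${\bf PB}(\mathfrak{I})$-phantom morphism. So $\mathfrak{I}\subseteq {\bf Ph}({\bf PB}(\mathfrak{I}))$.

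For the reverse inclusion, let $i$ be an arbitrary $n$-${\bf PB}(\mathfrak{I})$-phantom morphism, say $i\colon C'\rightarrow C$. The strategy is to show $i\in {^{\perp}(\mathfrak{I}^{\perp})}$ and then invoke Theorem \ref{Thm2}, which gives ${^{\perp}(\mathfrak{I}^{\perp})}=\mathfrak{I}$ since $\mathfrak{I}$ is special precovering. To prove $i\in {^{\perp}(\mathfrak{I}^{\perp})}$, I must check $\mathbb{E}(i,j)=0$ for every $j\in\mathfrak{I}^{\perp}$. Fix such a $j\colon A\rightarrow A'$ and any $\delta\in\mathbb{E}(C,A)$. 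Since $i$ is an $n$-${\bf PB}(\mathfrak{I})$-phantom morphism, $\mathbb{E}(i,A)(\delta)\in {\bf PB}(\mathfrak{I})$, i.e. lies in the subfunctor ${\bf PB}(\mathfrak{I})$. Now I use the identification ${\bf PB}(\mathfrak{I})\text{-}{\bf inj}=\mathfrak{I}^{\perp}$ established in the paragraph preceding the proposition: since $j\in\mathfrak{I}^{\perp}$, $j$ is ${\bf PB}(\mathfrak{I})$-injective, meaning $\mathbb{E}(C',j)$ kills every extension in the subfunctor ${\bf PB}(\mathfrak{I})$ with source $C'$. Therefore
$$
  \mathbb{E}(i,j)(\delta)=\mathbb{E}(C',j)\bigl(\mathbb{E}(i,A)(\delta)\bigr)=0,
$$
using the bifunctoriality identity $\mathbb{E}(i,j)=\mathbb{E}(C',j)\circ\mathbb{E}(i,A)$. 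Since $\delta$ was arbitrary, $\mathbb{E}(i,j)=0$, so $i\in {^{\perp}(\mathfrak{I}^{\perp})}=\mathfrak{I}$.

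The argument is essentially a bookkeeping exercise once the two preliminary identifications — namely that ${\bf PB}(\mathfrak{I})$ is genuinely an almost $n$-exact structure (Proposition \ref{Prop4}) and that its injective ideal coincides with $\mathfrak{I}^{\perp}$ — are in hand, and once Theorem \ref{Thm2} supplies the equality ${^{\perp}(\mathfrak{I}^{\perp})}=\mathfrak{I}$. The only point demanding a little care is to confirm that the phantom condition "$i^*\delta\in{\bf PB}(\mathfrak{I})$ for all $\delta$" is being applied with the correct variance: the definition of ${\bf Ph}(\mathbb{F})$ quantifies over all extensions $\delta\in\mathbb{E}(C,A)$ with $C$ the codomain of $i$, and $i^*\delta=\mathbb{E}(i,A)(\delta)$ sits in $\mathbb{E}(C',A)$, which is precisely the slot where ${\bf PB}(\mathfrak{I})$-injectivity of $j$ can be applied. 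So I expect no genuine obstacle; the main thing is to keep the two directions of the orthogonality and the source/target of $i$ straight.
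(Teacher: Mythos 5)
Your proposal is correct and follows essentially the same route as the paper's own proof: both directions rely on the definition of ${\bf PB}(\mathfrak{I})$, the identification ${\bf PB}(\mathfrak{I})\text{-}{\bf inj}=\mathfrak{I}^{\perp}$ established just before the proposition, and Theorem \ref{Thm2} to conclude ${^{\perp}(\mathfrak{I}^{\perp})}=\mathfrak{I}$. The only difference is that you spell out the bifunctoriality factorization $\mathbb{E}(i,j)=\mathbb{E}(C',j)\circ\mathbb{E}(i,A)$ explicitly, which the paper leaves implicit.
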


\begin{corollary}\label{Coro1}
Let $\mathfrak{I}$ be a special precovering ideal of $\mathscr{C}$ and take $\mathbb{F}={\bf PB}(\mathfrak{I})$, then $({\bf Ph}(\mathbb{F}),\mathbb{F}$-${\bf inj})$ is an $n$-ideal cotorsion pair.
\begin{proof}
By Proposition \ref{Prop5}, $\mathbb{F}$-${\bf inj}=\mathfrak{I}^{\perp}=({\bf Ph}(\mathbb{F}))^{\perp}$ and ${\bf Ph}(\mathbb{F})$ is a special precovering ideal. So by Theorem \ref{Thm2}, $({\bf Ph}(\mathbb{F}),({\bf Ph}(\mathbb{F}))^{\perp})$ is an $n$-ideal cotorsion pair. Hence $({\bf Ph}(\mathbb{F}),\mathbb{F}$-${\bf inj})$ is an $n$-ideal cotorsion pair.
  \end{proof}
\end{corollary}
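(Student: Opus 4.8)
The plan is to deduce the corollary directly from Proposition~\ref{Prop5} and Theorem~\ref{Thm2} by identifying the two coordinates of the pair with $\mathfrak{I}$ and $\mathfrak{I}^{\perp}$. First I would invoke Proposition~\ref{Prop5}: since $\mathfrak{I}$ is a special precovering ideal and $\mathbb{F}={\bf PB}(\mathfrak{I})$, we have ${\bf Ph}(\mathbb{F})={\bf Ph}({\bf PB}(\mathfrak{I}))=\mathfrak{I}$. In particular the first coordinate of the pair literally equals $\mathfrak{I}$, so it is again a special precovering ideal.

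Next I would dispatch the second coordinate using the observation recorded just before Proposition~\ref{Prop5}: for $\mathbb{F}={\bf PB}(\mathfrak{I})$ one has $j\in\mathfrak{I}^{\perp}\Leftrightarrow\mathbb{E}(i,j)=0$ for all $i\in\mathfrak{I}\Leftrightarrow j\in\mathbb{F}$-${\bf inj}$, i.e. $\mathbb{F}$-${\bf inj}=\mathfrak{I}^{\perp}$. Combining this with the previous step gives $\mathbb{F}$-${\bf inj}=\mathfrak{I}^{\perp}=({\bf Ph}(\mathbb{F}))^{\perp}$, so the pair $({\bf Ph}(\mathbb{F}),\mathbb{F}$-${\bf inj})$ is nothing but $({\bf Ph}(\mathbb{F}),({\bf Ph}(\mathbb{F}))^{\perp})$, equivalently $(\mathfrak{I},\mathfrak{I}^{\perp})$.

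Finally I would apply Theorem~\ref{Thm2} to the special precovering ideal ${\bf Ph}(\mathbb{F})$ (this is legitimate precisely because Proposition~\ref{Prop5} identifies it with $\mathfrak{I}$, which is special precovering by hypothesis) to conclude that $({\bf Ph}(\mathbb{F}),({\bf Ph}(\mathbb{F}))^{\perp})$ is an $n$-ideal cotorsion pair, and then rewrite $({\bf Ph}(\mathbb{F}))^{\perp}$ as $\mathbb{F}$-${\bf inj}$ using the equality from the second step. This yields the claim.

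I do not expect a genuine obstacle here: all the mathematical content has already been done in Proposition~\ref{Prop5} and Theorem~\ref{Thm2}. The only point that needs a moment of care is to verify the hypothesis of Theorem~\ref{Thm2} for the ideal ${\bf Ph}(\mathbb{F})$ rather than for an abstract $\mathfrak{I}$, and this is exactly what the equality ${\bf Ph}(\mathbb{F})=\mathfrak{I}$ from Proposition~\ref{Prop5} provides.
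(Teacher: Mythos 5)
Your proof is correct and follows essentially the same route as the paper's: both use Proposition~\ref{Prop5} to identify ${\bf Ph}(\mathbb{F})$ with $\mathfrak{I}$ and $\mathbb{F}$-${\bf inj}$ with $\mathfrak{I}^{\perp}$, and then apply Theorem~\ref{Thm2} to conclude. Your version is merely a bit more explicit about why ${\bf Ph}(\mathbb{F})$ inherits the special precovering property, which the paper leaves implicit.
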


\begin{lemma}\label{lemma5}
  If $i_0\colon A'\rightarrow Y_1$ is an $\mathbb{F}$-inflation which factors through an $\mathfrak{s}$-inflation $i\colon A'\rightarrow X_1$, then $i$ is an $\mathbb{F}$-inflation.
  \begin{proof}
    By definition, we have the following diagram:
      $$\xymatrix{
        A' \ar@{=}[d] \ar[r]^-{i} & X_1 \ar[d]^-{g} \ar[r] & X_2 \ar[r] & \cdots  \ar[r] & X_n  \ar[r] & C'  \ar@{-->}^{\delta}[r] & \\
        A' \ar[r]^-{i_0} & Y_1 \ar[r] & Y_2 \ar[r] & \cdots \ar[r] & Y_n \ar[r] & C \ar@{-->}^-{\theta}[r] & .
      }$$
    By Lemma \ref{lemma1}, the above diagram can be completed as follows
      $$\xymatrix{
        A' \ar@{=}[d] \ar[r]^-{i} & X_1 \ar[d]^-{g} \ar[r] & X_2 \ar[r] \ar[d] & \cdots  \ar[r] & X_n \ar[d] \ar[r] & C' \ar[d]^-{h} \ar@{-->}^{\delta}[r] & \\
        A' \ar[r]^-{i_0} & Y_1 \ar[r] & Y_2 \ar[r] & \cdots \ar[r] & Y_n \ar[r] & C \ar@{-->}^-{\theta}[r] &
      }$$
    such that $\delta=h^*\theta$. Since $\theta\in\mathbb{F}(C,A')$, $\delta\in\mathbb{F}(C',A')$, we obtain that $i$ is an $\mathbb{F}$-inflation.
  \end{proof}
\end{lemma}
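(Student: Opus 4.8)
The plan is to realise both inflations by distinguished $n$-exangles and then to compare them via the lifting axiom. First I would fix a distinguished $n$-exangle
$$A'\stackrel{i}\longrightarrow X_1\longrightarrow X_2\longrightarrow\cdots\longrightarrow X_n\longrightarrow C'\stackrel{\delta}\dashrightarrow$$
with $\delta\in\mathbb{E}(C',A')$ witnessing that $i$ is an $\mathfrak{s}$-inflation, together with a distinguished $n$-exangle
$$A'\stackrel{i_0}\longrightarrow Y_1\longrightarrow Y_2\longrightarrow\cdots\longrightarrow Y_n\longrightarrow C\stackrel{\theta}\dashrightarrow$$
with $\theta\in\mathbb{F}(C,A')$ witnessing that $i_0$ is an $\mathbb{F}$-inflation. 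Since $i_0$ factors through $i$, I write $i_0=g\circ i$ for a suitable $g\colon X_1\to Y_1$, so that the square with rows $i,i_0$ and vertical maps $1_{A'},g$ commutes. Our goal is to show $\delta\in\mathbb{F}(C',A')$.

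Next I would apply Lemma \ref{lemma1}(1) to this commutative square, taking $a=1_{A'}$ and $b=g$; it yields a morphism of distinguished $n$-exangles $f^\bullet\colon\langle X^\bullet,\delta\rangle\to\langle Y^\bullet,\theta\rangle$ with $f^0=1_{A'}$ and $f^1=g$. Writing $h:=f^{n+1}\colon C'\to C$, the defining relation of a morphism of $n$-exangles (Definition \ref{F2}(2)) gives $(f^0)_*\delta=(f^{n+1})^*\theta$, that is, $\delta=h^*\theta$. Finally, because $\mathbb{F}$ is an additive subfunctor of $\mathbb{E}$, it is closed under the base-change operation $h^*$, so $\delta=h^*\theta\in\mathbb{F}(C',A')$, which is exactly the assertion that $i$ is an $\mathbb{F}$-inflation.

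I do not anticipate a genuine obstacle: the argument is a single diagram chase. The one point that needs care is to invoke the correct form of the lifting lemma, namely Lemma \ref{lemma1}(1), which requires only a commutative square in degrees $0$ and $1$ (not a full morphism of complexes) and then produces the morphism of $n$-exangles whose top component $h$ pulls $\theta$ back to $\delta$; equivalently, in the language of almost $n$-exact structures, this last step is precisely closure of $\mathbb{F}$ under pullbacks, condition (NE2). Once $\delta=h^*\theta$ is established, closure of the subfunctor under $h^*$ closes the argument at once.
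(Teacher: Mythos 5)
Your proposal is correct and follows essentially the same route as the paper's proof: realize the two inflations by distinguished $n$-exangles, apply Lemma \ref{lemma1} to the commuting square in degrees $0,1$ to obtain a morphism of $n$-exangles ending in $h\colon C'\to C$, read off $\delta=h^*\theta$ from the defining relation of such a morphism, and conclude by the subfunctor closure of $\mathbb{F}$ under $h^*$. Your write-up is a bit more explicit in citing part (1) of Lemma \ref{lemma1} and in naming the compatibility $(f^0)_*\delta=(f^{n+1})^*\theta$, but the argument is the same.
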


We say that an additive subfunctor $\mathbb{F}\subset \mathbb{E}$ has {\em enough injective morphisms} if for every object $A'\in\mathscr{C}$, there exists a distinguished $n$-$\mathbb{F}$-exangle
$$
  A'\stackrel{e}\longrightarrow X_{1}\longrightarrow X_2 \longrightarrow \cdots \longrightarrow X_{n}\longrightarrow C \stackrel{\delta}\dashrightarrow
$$
with $e\colon A'\rightarrow X_1\in\mathbb{F}$-${\bf inj}$. The notion of {\em enough projective morphisms} is defined dually.

\begin{definition}\label{F16}
  {\em An additive subfunctor of $\mathbb{F}\subseteq\mathbb{E}$ has enough special injective morphisms if for every object $A'\in\mathscr{C}$, there exists a distinguished $n$-$\mathbb{F}$-exangle obtained by an $n$-$\mathbb{F}$-phantom-morphism $f\colon C\rightarrow C'$ and an $n$-exangle $\delta$, namely, we have the following commutative diagram
    $$\xymatrix{
      A' \ar@{=}[d] \ar[r]^-{e} & X_1 \ar[d] \ar[r] & X_2 \ar[r] \ar[d] & \cdots  \ar[r] & X_n \ar[d] \ar[r] & C \ar[d]^-{f} \ar@{-->}^{f^*\delta}[r] & \\
      A' \ar[r] & Y_1 \ar[r] & Y_2 \ar[r] & \cdots \ar[r] & Y_n \ar[r] & C' \ar@{-->}^-{\delta}[r] &
    }$$
  with $e\colon A'\rightarrow X_1$ being in $\mathbb{F}$-{\bf inj}. The notion of enough special projective morphisms is defined dually.}
\end{definition}

\begin{proposition}\label{Prop6}
  Let $\mathbb{F}\subseteq\mathbb{E}$ be an additive subfunctor which has enough injective morphisms, then ${\bf Ph}(\mathbb{F})={^{\perp}{\mathbb{F}}}$-${\bf inj}$.
  \begin{proof}
    Noting that the pair $({\bf Ph}(\mathbb{F}), \mathbb{F}$-${\bf inj})$ is an $n$-orthogonal pair, for $i\in{\bf Ph}(\mathbb{F}),j\in\mathbb{F}$-${\bf inj}$ and $\delta\in\mathbb{E}(C,A')$, we have the commutative diagram
      $$\xymatrix@=0.8cm{
       & A' \ar[rr] \ar@{=}[dd] \ar[dl]^{j} & & Z_1 \ar[rr] \ar[dl] \ar[dd] & & \cdots \ar[rr] \ar@{..>}[dd] \ar@{..>}[dl] & & Z_n \ar[rr] \ar[dd] \ar[dl] & & C' \ar@{=}[dl] \ar[dd]^-{i} \ar@{-->}[rr]^{i^*\delta} & & \\
       A \ar[rr] \ar@{=}[dd] & & U_1 \ar[dd] \ar[rr] & & \cdots \ar[rr] \ar@{..>}[dd] & & U_n \ar[dd] \ar[rr] & & C' \ar@{-->}[rr] \ar[dd]^-{i} & & & \\
       & A' \ar[rr] \ar[dl]^-{j} & & X_1 \ar[rr] \ar[dl] & & \cdots \ar[rr] \ar@{..>}[dl] & & X_n \ar[rr] \ar[dl] & & C \ar@{=}[dl] \ar@{-->}[rr]^-{\delta} & & \\
       A \ar[rr] & & Y_1 \ar[rr] & & \cdots \ar[rr] & & Y_n \ar[rr] & & C \ar@{-->}[rr]^-{j_*\delta} & & & \\
      }$$
    Then we obtain that $\mathbb{E}(i,j)(\delta)=0$, and then ${\bf Ph}(\mathbb{F})\subseteq {^{\perp}{\mathbb{F}}}$-${\bf inj}$. Conversely, take any morphism $f\colon X\rightarrow C$ in ${^{\perp}{\mathbb{F}}}$-${\bf inj}$ and any $\eta\in\mathbb{E}(C,A)$, there exists an $\mathbb{F}$-inflation $e\colon A\rightarrow Y$ which belongs to $\mathbb{F}$-${\bf inj}$, since $\mathbb{F}$ has enough injective morphisms. Then we have the commutative diagram
      $$\xymatrix@=0.8cm{
        & A \ar[rr]^-{d_\delta^0} \ar@{=}[dd] \ar[dl]_-{e} & & W_1 \ar[rr] \ar@{-->}[dlll]^{g} \ar[dl] \ar[dd] & & \cdots \ar[rr] \ar@{..>}[dd] \ar@{..>}[dl] & & W_n \ar[rr] \ar[dd] \ar[dl] & & X \ar@{=}[dl] \ar[dd] \ar@{-->}[rr]^{f^*\eta} & & \\
        Y \ar[rr] \ar@{=}[dd] & & V_1 \ar[dd] \ar[rr] & & \cdots \ar[rr] \ar@{..>}[dd] & & V_n \ar[dd] \ar[rr] & & X \ar@{-->}[rr] \ar[dd]^-{f} & & & \\
        & A \ar[rr] \ar[dl]_-{e} & & Q_1 \ar[rr] \ar[dl] & & \cdots \ar[rr] \ar@{..>}[dl] & & Q_n \ar[rr] \ar[dl] & & C \ar@{=}[dl] \ar@{-->}[rr]^-{\eta} & & \\
        Y \ar[rr] & & P_1 \ar[rr] & & \cdots \ar[rr] & & P_n \ar[rr] & & C \ar@{-->}[rr]^-{e_*\eta} & & &
       }$$
    Since $f\in {^{\perp}{\mathbb{F}}}$-${\bf inj}$ and $e\in {\mathbb{F}}$-${\bf inj}$, we get that $\mathbb{E}(f,e)=0$, i.e., $e_*f^*\delta=0$. Thus, there exists $g\colon W_1\rightarrow Y$ such that $gd_\delta^0=e$. By Lemma \ref{lemma5}, we obtain that $d_\delta^0$ is an $\mathbb{F}$-inflation since $e$ is an $\mathbb{F}$-inflation and $d_\delta^0$ is an $\mathfrak{s}$-inflation. Hence, $f^*\eta\in\mathbb{F}(X,A)$, i.e., $f\in{\bf Ph}(\mathbb{F})$.
  \end{proof}
\end{proposition}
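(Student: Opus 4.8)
The statement asserts an equality of two ideals, so the plan is to prove the two inclusions ${\bf Ph}(\mathbb{F})\subseteq {^{\perp}{\mathbb{F}}}$-${\bf inj}$ and ${^{\perp}{\mathbb{F}}}$-${\bf inj}\subseteq{\bf Ph}(\mathbb{F})$, with only the second one requiring that $\mathbb{F}$ have enough injective morphisms. For the inclusion ``$\subseteq$'' it is enough to check that $({\bf Ph}(\mathbb{F}),\mathbb{F}$-${\bf inj})$ is an $n$-orthogonal pair of ideals: given $i\colon X\to C$ in ${\bf Ph}(\mathbb{F})$, $j\colon A'\to A$ in $\mathbb{F}$-${\bf inj}$ and any $\delta\in\mathbb{E}(C,A')$, one has $\mathbb{E}(i,j)(\delta)=\mathbb{E}(X,j)(i^{\ast}\delta)$; since $i$ is an $n$-$\mathbb{F}$-phantom morphism, $i^{\ast}\delta\in\mathbb{F}(X,A')$, and since $j$ is $\mathbb{F}$-injective, $\mathbb{F}(X,j)=0$, so (using that $\mathbb{F}$ is a subfunctor of $\mathbb{E}$) $\mathbb{E}(X,j)$ kills $i^{\ast}\delta$. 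Thus $\mathbb{E}(i,j)=0$ for all such $\delta$, i.e. $i\in{^{\perp}{\mathbb{F}}}$-${\bf inj}$.

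For the reverse inclusion, fix $f\colon X\to C$ in ${^{\perp}{\mathbb{F}}}$-${\bf inj}$ and an arbitrary extension $\eta\in\mathbb{E}(C,A)$; since $\eta$ is arbitrary, it suffices to show $\delta:=f^{\ast}\eta\in\mathbb{F}(X,A)$. First realize $\delta$ by a distinguished $n$-exangle $A\stackrel{d^{0}_{\delta}}{\longrightarrow}W_{1}\longrightarrow\cdots\longrightarrow W_{n}\longrightarrow X\stackrel{\delta}{\dashrightarrow}$. By the hypothesis of enough injective morphisms applied to $A$, there is a distinguished $n$-$\mathbb{F}$-exangle whose inflation $e\colon A\to Y$ lies in $\mathbb{F}$-${\bf inj}$; in particular $e$ is an $\mathbb{F}$-inflation. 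Since $f\in{^{\perp}{\mathbb{F}}}$-${\bf inj}$ and $e\in\mathbb{F}$-${\bf inj}$ we get $\mathbb{E}(f,e)=0$, hence $e_{\ast}\delta=e_{\ast}f^{\ast}\eta=\mathbb{E}(f,e)(\eta)=0$. Now feed $e_{\ast}\delta=0$ into the defining exactness of the $n$-exangle $\langle W^{\bullet},\delta\rangle$ (Definition \ref{F3}(2), evaluated at $Y$): exactness of $\mathscr{C}(W_{1},Y)\stackrel{(d^{0}_{\delta})^{\ast}}{\longrightarrow}\mathscr{C}(A,Y)\stackrel{\delta^{\sharp}}{\longrightarrow}\mathbb{E}(X,Y)$ at the middle term, together with $\delta^{\sharp}(e)=e_{\ast}\delta=0$, yields $g\colon W_{1}\to Y$ with $e=g\circ d^{0}_{\delta}$. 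So the $\mathbb{F}$-inflation $e$ factors through the $\mathfrak{s}$-inflation $d^{0}_{\delta}$, and Lemma \ref{lemma5} then shows $d^{0}_{\delta}$ is an $\mathbb{F}$-inflation, which is exactly the assertion that the extension $\delta=f^{\ast}\eta$ realized by $\langle W^{\bullet},\delta\rangle$ lies in $\mathbb{F}(X,A)$. Therefore $f\in{\bf Ph}(\mathbb{F})$.

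I expect the main obstacle to be the organization of the second half: one must assemble, in a single compatible diagram, the ladder connecting the exangle realizing $\eta$, the $n$-$\mathbb{F}$-exangle built from $e$, and the exangle realizing $f^{\ast}\eta$, so that the identity $e_{\ast}\delta=0$ is visibly available and the hypotheses of Lemma \ref{lemma5} are met verbatim; this is most cleanly presented as one three-dimensional diagram built by iterating Lemma \ref{lemma1}. A secondary point to treat with care is the final step ``$d^{0}_{\delta}$ an $\mathbb{F}$-inflation $\Rightarrow f^{\ast}\eta\in\mathbb{F}$'', which is precisely what the construction inside the proof of Lemma \ref{lemma5} delivers (via Lemma \ref{lemma1} and the fact that $\mathbb{F}\subseteq\mathbb{E}$, being a subfunctor, is stable under the pullback operation $h^{\ast}$).
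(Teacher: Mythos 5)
Your proposal is correct and follows essentially the same route as the paper: first inclusion by verifying that $({\bf Ph}(\mathbb{F}),\mathbb{F}\text{-}{\bf inj})$ is an $n$-orthogonal pair via the factorization $\mathbb{E}(i,j)(\delta)=\mathbb{E}(X,j)(i^{\ast}\delta)$ and the subfunctor property; second inclusion by using enough $\mathbb{F}$-injective morphisms to get $e\in\mathbb{F}\text{-}{\bf inj}$, computing $e_{\ast}f^{\ast}\eta=\mathbb{E}(f,e)(\eta)=0$, invoking exactness of the $n$-exangle at $\mathscr{C}(A,Y)$ to produce the factorization $e=g\circ d^{0}_{\delta}$, and concluding with Lemma \ref{lemma5}. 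Your closing remark about the step ``$d^{0}_{\delta}$ is an $\mathbb{F}$-inflation $\Rightarrow f^{\ast}\eta\in\mathbb{F}$'' correctly identifies the subtlety: Lemma \ref{lemma5}'s proof actually shows that the extension realized by the given $n$-exangle (not merely some other one) lies in $\mathbb{F}$, which is what is needed here, and the paper uses it in exactly this way.
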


\begin{corollary}\label{Coro2}
  Let $\mathfrak{I}$ be an ideal of $\mathscr{C}$ such that $(\mathfrak{I},\mathfrak{I}^{\perp})$ is a complete $n$-ideal cotorsion pair. Take $\mathbb{F}={\bf PB}(\mathfrak{I})$, then $\mathbb{F}\subseteq \mathbb{E}$ is an additive subfunctor which has enough special injective morphisms and $\mathfrak{I}={\bf Ph}(\mathbb{F})$.
  \begin{proof}
    By Proposition \ref{Prop4}, $\mathbb{F}$ is an additive subfunctor of $\mathbb{E}$. Since $(\mathfrak{I},\mathfrak{I}^{\perp})$ is a complete $n$-ideal cotorsion pair and $\mathbb{F}$-${\bf inj}=\mathfrak{I}^{\perp}$, $\mathbb{F}$-{\bf inj} is a special preenveloping ideal. That is, each $A\in\mathscr{C}$ has a special $\mathbb{F}$-{\bf inj}-preenvelope $j\colon A\rightarrow X_1$. Thus, there exists a distinguished $n$-exangle $A\longrightarrow Y_{1}\longrightarrow Y_{2}\longrightarrow \cdots \longrightarrow  Y_{n}\longrightarrow C \stackrel{\delta}\dashrightarrow$ such that we have the following commutative diagram
      $$\xymatrix{
      A \ar@{=}[d] \ar[r]^-{j} & X_1 \ar[d] \ar[r] & X_2 \ar[d] \ar[r] & \cdots  \ar[r] & X_n \ar[d] \ar[r] & C' \ar[d]^-{i} \ar@{-->}^{i^*\delta}[r] & \\
      A \ar[r] & Y_1 \ar[r] & Y_2 \ar[r] & \cdots \ar[r] & Y_n \ar[r] & C \ar@{-->}^{\delta}[r] &
      }$$
    with $i\colon C'\rightarrow C$ being in ${^{\perp}{\mathbb{F}}}$-{\bf inj}. By the definition of $\mathbb{F}$-{\bf inj}-preenvelopes, $j\in \mathbb{F}$-{\bf inj}. Since $(\mathfrak{I},\mathfrak{I}^{\perp})$ is complete, $\mathfrak{I}$ is a special precovering ideal. By Proposition \ref{Prop5}, $\mathfrak{I}={\bf Ph}({\bf PB}(\mathfrak{I}))={\bf Ph}(\mathbb{F})$. Note that $i\in {^{\perp}{\mathbb{F}}}$-${\bf inj}=\mathfrak{I}={\bf Ph}(\mathbb{F})$. Therefore, we finish the proof.
  \end{proof}
\end{corollary}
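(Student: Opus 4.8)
The plan is to deduce both assertions from results already established, treating the equality $\mathfrak{I}={\bf Ph}(\mathbb{F})$ first and then using it to supply the data required by Definition \ref{F16}.

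For the equality I would first note that $\mathbb{F}={\bf PB}(\mathfrak{I})$ is an additive subfunctor of $\mathbb{E}$ by Proposition \ref{Prop4}, and that $\mathbb{F}$-${\bf inj}=\mathfrak{I}^{\perp}$ by the observation recorded just before Proposition \ref{Prop5}. Since $(\mathfrak{I},\mathfrak{I}^{\perp})$ is a \emph{complete} $n$-ideal cotorsion pair, $\mathfrak{I}$ is in particular a special precovering ideal, so Proposition \ref{Prop5} applies and gives $\mathfrak{I}={\bf Ph}({\bf PB}(\mathfrak{I}))={\bf Ph}(\mathbb{F})$.

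For the existence of enough special injective morphisms I would exploit the other half of completeness: $\mathfrak{I}^{\perp}=\mathbb{F}$-${\bf inj}$ is a special preenveloping ideal, so any $A'\in\mathscr{C}$ admits a special $\mathbb{F}$-${\bf inj}$-preenvelope $e\colon A'\to X_1$. By the dual of Definition \ref{F12}, this $e$ is the $\mathfrak{s}$-inflation of a distinguished $n$-exangle $A'\stackrel{e}{\longrightarrow} X_1\longrightarrow\cdots\longrightarrow X_n\longrightarrow C'\dashrightarrow$ realizing $i^*\delta$ for some $\delta\in\mathbb{E}(C,A')$ and some morphism $i\colon C'\to C$ in ${^{\perp}(\mathfrak{I}^{\perp})}=\mathfrak{I}$, together with a comparison diagram to a distinguished $n$-exangle realizing $\delta$. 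By the first step $i\in\mathfrak{I}={\bf Ph}(\mathbb{F})$, hence $i^*\delta\in\mathbb{F}(C',A')$; thus the top row is a distinguished $n$-$\mathbb{F}$-exangle whose inflation $e$ lies in $\mathbb{F}$-${\bf inj}$, and it is obtained from the $n$-$\mathbb{F}$-phantom morphism $i$ and the $n$-exangle $\delta$. This is precisely the commutative diagram demanded by Definition \ref{F16}, so $\mathbb{F}$ has enough special injective morphisms.

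The argument is essentially a matter of matching up definitions, so I do not expect a genuine obstacle; the one point requiring care is the logical order — the identification $\mathfrak{I}={\bf Ph}(\mathbb{F})$ must be in hand before the second step, since it is precisely what guarantees that the morphism $i$ coming from the special $\mathbb{F}$-${\bf inj}$-preenvelope is an $n$-$\mathbb{F}$-phantom morphism, and hence what forces the relevant row to be a distinguished $n$-$\mathbb{F}$-exangle.
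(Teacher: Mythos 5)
Your proposal is correct and follows essentially the same route as the paper's proof: you invoke Proposition \ref{Prop4} for the subfunctor claim, the identification $\mathbb{F}$-${\bf inj}=\mathfrak{I}^{\perp}$ together with Proposition \ref{Prop5} for $\mathfrak{I}={\bf Ph}(\mathbb{F})$, and the special preenveloping half of completeness to produce the diagram demanded by Definition \ref{F16}. The only difference is cosmetic ordering — you establish $\mathfrak{I}={\bf Ph}(\mathbb{F})$ up front and then read off that $i\in{}^{\perp}(\mathfrak{I}^{\perp})=\mathfrak{I}={\bf Ph}(\mathbb{F})$, whereas the paper draws the diagram first and closes with the same identification — and you make explicit (which the paper leaves implicit) that $i\in{\bf Ph}(\mathbb{F})$ forces $i^{*}\delta\in\mathbb{F}$, so the top row is genuinely a distinguished $n$-$\mathbb{F}$-exangle.
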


\begin{corollary}\label{Coro3}
  If $\mathbb{F}\subseteq \mathbb{E}$ has enough special injective morphisms, then the ideal $\mathbb{F}$-{\bf inj} is a special preenveloping ideal.
  \begin{proof}
    Since $\mathbb{F}\subseteq \mathbb{E}$ has enough special injective morphisms, for any $A\in\mathscr{C}$ there exists a distinguished $n$-$\mathbb{F}$-exangle $A \longrightarrow X_{1}\longrightarrow X_2 \longrightarrow \cdots \longrightarrow X_{n}\longrightarrow C \stackrel{\delta}\dashrightarrow$ such that we have the following commutative diagram
      $$\xymatrix{
        A \ar@{=}[d] \ar[r]^-{e} & Y_1 \ar[d] \ar[r] & Y_2 \ar[d] \ar[r] & \cdots  \ar[r] & Y_n \ar[d] \ar[r] & C' \ar[d]^-{i} \ar@{-->}^{i^*\delta}[r] & \\
        A \ar[r] & X_1 \ar[r] & X_2 \ar[r] & \cdots \ar[r] & X_n \ar[r] & C \ar@{-->}^{\delta}[r] &
        }$$
    with  $i\in{\bf Ph}(\mathbb{F})$ and $e\in\mathbb{F}$-{\bf inj}. By Proposition \ref{Prop6}, we obtain that $i\in {^{\perp}{\mathbb{F}}}$-{\bf inj}. Thus, ${\mathbb{F}}$-{\bf inj} is a special preenveloping ideal.
  \end{proof}
\end{corollary}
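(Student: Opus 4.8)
The plan is to read a special $\mathbb{F}$-$\mathbf{inj}$-preenvelope straight off the data furnished by the hypothesis, after lining that data up against the dual of Definition~\ref{F12}.

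First I would fix an arbitrary object $A\in\mathscr{C}$. Since $\mathbb{F}$ has enough special injective morphisms, there is a distinguished $n$-$\mathbb{F}$-exangle whose $\mathfrak{s}$-inflation $e\colon A\rightarrow Y_1$ lies in $\mathbb{F}$-$\mathbf{inj}$, fitting into a commutative diagram
$$\xymatrix{
  A \ar@{=}[d] \ar[r]^-{e} & Y_1 \ar[d] \ar[r] & \cdots \ar[r] & Y_n \ar[d] \ar[r] & C' \ar[d]^-{i} \ar@{-->}^{i^{\ast}\delta}[r] & \\
  A \ar[r] & X_1 \ar[r] & \cdots \ar[r] & X_n \ar[r] & C \ar@{-->}^{\delta}[r] & }$$
in which $i\colon C'\rightarrow C$ is an $n$-$\mathbb{F}$-phantom morphism and $\delta\in\mathbb{E}(C,A)$ is an $n$-exangle. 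Thus $e$ is \emph{already} presented as the $\mathfrak{s}$-inflation of a distinguished $n$-exangle realizing $i^{\ast}\delta=\mathbb{E}(i,A)(\delta)$; comparing with the dual of Definition~\ref{F12}, $e$ will be a special $\mathbb{F}$-$\mathbf{inj}$-preenvelope of $A$ as soon as we verify $i\in{}^{\perp}(\mathbb{F}\text{-}\mathbf{inj})$.

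To get that last membership I would invoke Proposition~\ref{Prop6}. Its hypothesis is indeed available: since $i\in{\bf Ph}(\mathbb{F})$ we have $i^{\ast}\delta\in\mathbb{F}$, so the top row of the diagram above is itself a distinguished $n$-$\mathbb{F}$-exangle with $\mathfrak{s}$-inflation $e\in\mathbb{F}$-$\mathbf{inj}$; as $A$ was arbitrary this says exactly that $\mathbb{F}$ has enough injective morphisms. Proposition~\ref{Prop6} then gives ${\bf Ph}(\mathbb{F})={}^{\perp}(\mathbb{F}\text{-}\mathbf{inj})$, whence $i\in{}^{\perp}(\mathbb{F}\text{-}\mathbf{inj})$ and the proof closes. (One can even sidestep this: only the inclusion ${\bf Ph}(\mathbb{F})\subseteq{}^{\perp}(\mathbb{F}\text{-}\mathbf{inj})$ is needed, and that holds unconditionally because $({\bf Ph}(\mathbb{F}),\mathbb{F}\text{-}\mathbf{inj})$ is an $n$-orthogonal pair.)

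The argument is essentially bookkeeping, so the main obstacle is keeping the dualization honest: in passing from ``special precover'' to ``special preenvelope'' one must be sure that the relevant extension is \emph{pulled back} along $i$ (not pushed out), that $e$ rather than some later component is the candidate preenvelope, and that the orthogonal ideal containing $i$ is ${}^{\perp}(\mathbb{F}\text{-}\mathbf{inj})$ and not $(\mathbb{F}\text{-}\mathbf{inj})^{\perp}$. Once the diagram coming from ``enough special injective morphisms'' is transcribed with its labels aligned to Definition~\ref{F12}, the conclusion is immediate from Proposition~\ref{Prop6}.
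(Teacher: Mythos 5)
Your proof is correct and follows essentially the same route as the paper: read the special preenvelope directly off the data supplied by ``enough special injective morphisms'' and apply Proposition~\ref{Prop6} to conclude $i\in{}^{\perp}(\mathbb{F}\text{-}\mathbf{inj})$. Your parenthetical remark is a genuine (if small) improvement over the paper's phrasing: only the unconditional inclusion ${\bf Ph}(\mathbb{F})\subseteq{}^{\perp}(\mathbb{F}\text{-}\mathbf{inj})$ coming from the $n$-orthogonal pair is needed, so one need not check that $\mathbb{F}$ has enough injective morphisms (the hypothesis of Proposition~\ref{Prop6}), which the paper invokes without comment.
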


\section{Special Precovering Ideals and Phantom Morphisms }
  In this section we study the connections between special precovering ideals and $n$-$\mathbb{F}$-phantom morphisms. As before, we still assume that $(\mathscr{C}, \mathbb{E},\mathfrak{s})$ is an $n$-exangulated category.

\begin{lemma}\label{lemma6}
  Let $\mathbb{F}\subseteq\mathbb{E}$ and $A\in\mathscr{C}$. Take a distinguished $n$-exangle $K\longrightarrow P_{1}\longrightarrow P_2 \longrightarrow \cdots \longrightarrow P_{n}\stackrel{p}\longrightarrow A \stackrel{\delta}\dashrightarrow$ satisfying $p\colon P_n\rightarrow A$ is a projective morphism. Then $\phi\colon X\rightarrow A$ is an $n$-$\mathbb{F}$-phantom morphism if and only if $\mathbb{E}(\phi,K)(\delta)\in\mathbb{F}(X,K)$.
  \begin{proof}
    By the definition of $n$-$\mathbb{F}$-phantom morphisms, the necessity is clear.  Let
    $$
      A' \longrightarrow X_{1}\longrightarrow X_2 \longrightarrow \cdots \longrightarrow X_{n}\longrightarrow A \stackrel{\eta}\dashrightarrow
    $$ be a distinguished $n$-exangle.
  We have the following commutative diagram
      $$\xymatrix{
        A' \ar@{=}[d] \ar[r] & Z_1 \ar[d] \ar[r] & Z_2 \ar[r] \ar[d] & \cdots \ar[r] & Z_n \ar[d] \ar[r] & P_n \ar@{-->}[dl]_{h^n} \ar[d]^{p} \ar@{-->}^{p^*\eta=0}[r] & \\
        A' \ar[r] & X_1 \ar[r] & X_2 \ar[r] & \cdots \ar[r] & X_n \ar[r]^{d_X^n} & A \ar@{-->}^-{\eta}[r] & .
      }$$
    Hence there exists $h^n\colon P_n\rightarrow X_n$ such that $d_X^nh^n=p$. Thus we have a commutative square
      $$\xymatrix{
        K \ar[r] & P_1 \ar[r] & P_2 \ar[r] & \cdots  \ar[r] & P_n \ar[d] \ar[r] & A \ar@{=}[d] \ar@{-->}^{\delta}[r] & \\
        A' \ar[r] & X_1 \ar[r] & X_2 \ar[r] & \cdots \ar[r] & X_n \ar[r] & A \ar@{-->}^{\eta}[r] & .
      }$$
    By the dual of Lemma \ref{lemma1}, the above diagram can be completed as follows
      $$\xymatrix{
        K \ar[r] \ar[d]^{g} & P_1 \ar[d] \ar[r] & P_2 \ar[d] \ar[r] & \cdots  \ar[r] & P_n \ar[d] \ar[r] & A \ar@{=}[d] \ar@{-->}^{\delta}[r] & \\
        A' \ar[r] & X_1 \ar[r] & X_2 \ar[r] & \cdots \ar[r] & X_n \ar[r] & A \ar@{-->}^{\eta}[r] & .
      }$$
    Then we have that $\mathbb{E}(\phi,A')(\eta)=\phi^*g_*\delta=g_*\phi^*\delta$. Since $\phi^*\delta\in\mathbb{F}(X,K)$, we have that $\mathbb{E}(\phi,A')(\eta)\in\mathbb{F}(X,A')$. Hence, $\phi$ is an $n$-$\mathbb{F}$-phantom morphism.
  \end{proof}
\end{lemma}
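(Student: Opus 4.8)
The plan is to prove the two implications separately; the forward direction is immediate, and essentially all the content lies in the converse. For necessity, observe that $\delta$ itself is an element of $\mathbb{E}(A,K)$, so if $\phi\colon X\to A$ is an $n$-$\mathbb{F}$-phantom morphism then Definition \ref{F14}, applied to this particular extension, yields $\mathbb{E}(\phi,K)(\delta)=\phi^{*}\delta\in\mathbb{F}(X,K)$, which is exactly the asserted condition.

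For sufficiency, assume $\phi^{*}\delta\in\mathbb{F}(X,K)$ and fix an arbitrary extension $\eta\in\mathbb{E}(A,A')$, realized by a distinguished $n$-exangle $A'\to X_{1}\to\cdots\to X_{n}\stackrel{d_{X}^{n}}\longrightarrow A\stackrel{\eta}\dashrightarrow$; I must show $\phi^{*}\eta\in\mathbb{F}(X,A')$. The key reduction is to show that $\eta$ is a pushforward of $\delta$, i.e. that there is a morphism $g\colon K\to A'$ with $g_{*}\delta=\eta$. Granting this, bifunctoriality of $\mathbb{E}$ gives $\phi^{*}\eta=\phi^{*}(g_{*}\delta)=g_{*}(\phi^{*}\delta)$; since $\mathbb{F}$ is an additive subfunctor of $\mathbb{E}$ it is closed under pushforwards, so $g_{*}(\phi^{*}\delta)\in\mathbb{F}(X,A')$, and as $\eta$ was arbitrary we conclude $\phi\in{\bf Ph}(\mathbb{F})$.

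To produce $g$, I would exploit that $p\colon P_{n}\to A$ is a projective morphism, so $p^{*}\eta=0$. Feeding $p$ into the exact sequence $\mathscr{C}(P_{n},X_{n})\stackrel{\mathscr{C}(P_{n},d_{X}^{n})}\longrightarrow\mathscr{C}(P_{n},A)\stackrel{\eta_{\sharp}}\longrightarrow\mathbb{E}(P_{n},A')$ coming from axiom (1) of the $n$-exangle $\langle X^{\bullet},\eta\rangle$, the vanishing $\eta_{\sharp}(p)=p^{*}\eta=0$ produces $h^{n}\colon P_{n}\to X_{n}$ with $d_{X}^{n}h^{n}=p$. This $h^{n}$ together with $1_{A}$ forms a commutative square at the right-hand end of the distinguished $n$-exangles $\langle P^{\bullet},\delta\rangle$ and $\langle X^{\bullet},\eta\rangle$, so by the dual of Lemma \ref{lemma1}(1) it extends to a morphism of $n$-exangles $\langle P^{\bullet},\delta\rangle\to\langle X^{\bullet},\eta\rangle$ which is $1_{A}$ in degree $n+1$ and $h^{n}$ in degree $n$; take $g\colon K\to A'$ to be its degree-$0$ component. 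The compatibility condition built into a morphism of extensions (Definition \ref{F2}(2)) then reads $g_{*}\delta=1_{A}^{*}\eta=\eta$, completing the reduction.

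The only step that requires any thought is this construction of $g$: one has to realize that a distinguished $n$-exangle $\delta$ whose deflation is projective dominates, via pushforward, every $\eta\in\mathbb{E}(A,-)$, then lift $p$ through $d_{X}^{n}$ using projectivity and convert the resulting single commutative square into a full morphism of $n$-exangles by the dual of Lemma \ref{lemma1}. After that the argument is just bifunctoriality of $\mathbb{E}$ and the fact that the subfunctor $\mathbb{F}$ is closed under pushforwards, so I expect no further obstacle.
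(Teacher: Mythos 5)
Your proof is correct and takes essentially the same route as the paper. The only cosmetic difference is in how you obtain the lift $h^{n}\colon P_{n}\to X_{n}$: you invoke exactness of $\mathscr{C}(P_{n},X_{n})\to\mathscr{C}(P_{n},A)\stackrel{\eta_{\sharp}}\to\mathbb{E}(P_{n},A')$ directly from Definition \ref{F3}(1), whereas the paper first realizes $p^{*}\eta=0$ by a split $n$-exangle and reads off the section; both steps are just unwindings of the same vanishing $p^{*}\eta=0$. The remainder — completing the commutative square to a morphism of $n$-exangles via the dual of Lemma \ref{lemma1}, extracting $g$, noting $g_{*}\delta=\eta$, and then using bifunctoriality together with the closure of the additive subfunctor $\mathbb{F}$ under pushforwards — matches the paper's argument exactly.
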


\begin{lemma}\label{lemma9}
  Let $\mathfrak{J}$ be an ideal of $\mathscr{C}$ and consider the following diagram of distinguished $n$-exangles
    $$\xymatrix{
      A \ar@{=}[d] \ar[r]^-{j} & X_1 \ar[d] \ar[r] & X_2 \ar[r] \ar[d] & \cdots  \ar[r] & X_n \ar[d] \ar[r] & C' \ar[d]^-{i} \ar@{-->}^{i^*\delta}[r] & \\
      A \ar[r] & Y_1 \ar[r] & Y_2 \ar[r] & \cdots \ar[r] & Y_n \ar[r] & C \ar@{-->}^-{\delta}[r] & ,
    }$$
  where $i\in{^{\perp}\mathfrak{J}}$. If $j\in\mathfrak{J}$, then $j$ is an $\mathfrak{J}$-preenvelope.
  \begin{proof}
    Consider any morphism $j'\colon A\rightarrow D$ in $\mathfrak{J}$. Since $\mathbb{E}(C',j')\circ\mathbb{E}(i,A)(\delta)=\mathbb{E}(i,j')(\delta)=0$, there exists $X_1\rightarrow D$ such that $j'$ factors through $j$. Hence $j$ is an $\mathfrak{J}$-preenvelope.
  \end{proof}
\end{lemma}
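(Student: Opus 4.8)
The plan is to check the definition of a $\mathfrak{J}$-preenvelope directly. Since $j\in\mathfrak{J}$ is assumed by hypothesis, what remains is to show that every morphism $j'\colon A\rightarrow D$ lying in $\mathfrak{J}$ factors through $j$.

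First I would unwind the hypothesis $i\in{^{\perp}\mathfrak{J}}$: by the definition of the left orthogonal ideal this means $\mathbb{E}(i,m)=0$ for every $m\in\mathfrak{J}$, hence in particular $\mathbb{E}(i,j')=0$. Using the bifunctoriality of $\mathbb{E}$ together with the fact that the top row realizes the extension $i^{*}\delta=\mathbb{E}(i,A)(\delta)$, I would compute the pushforward of $i^{*}\delta$ along $j'$:
$$
(j')_{*}(i^{*}\delta)=\mathbb{E}(C',j')\bigl(\mathbb{E}(i,A)(\delta)\bigr)=\mathbb{E}(i,j')(\delta)=0 .
$$

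Next I would invoke the exactness built into the distinguished $n$-exangle in the top row. Writing $j=d^{0}_{X}$ for its first differential and $C'$ for its $(n+1)$-st term, Definition~\ref{F3}(2) yields that the sequence $\mathscr{C}(X_{1},D)\stackrel{\mathscr{C}(j,D)}{\longrightarrow}\mathscr{C}(A,D)\stackrel{(i^{*}\delta)^{\sharp}}{\longrightarrow}\mathbb{E}(C',D)$ is exact for every $D\in\mathscr{C}$. Since $(i^{*}\delta)^{\sharp}$ carries $j'\colon A\rightarrow D$ to $(j')_{*}(i^{*}\delta)=0$, the morphism $j'$ lies in the image of $\mathscr{C}(j,D)$; that is, there is $w\colon X_{1}\rightarrow D$ with $j'=w\circ j$. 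As $j'\in\mathfrak{J}$ was arbitrary and $j\in\mathfrak{J}$, this is exactly the statement that $j$ is a $\mathfrak{J}$-preenvelope.

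This argument presents no real obstacle; the only points requiring care are selecting the correct (covariant) half of the $n$-exangle exactness axiom and evaluating it at the first differential of the top row, and applying the bifunctoriality identity $\mathbb{E}(C',j')\circ\mathbb{E}(i,A)=\mathbb{E}(i,j')$ in the right order. The second row serves only to supply the extension $\delta$ from which the top extension $i^{*}\delta$ is pulled back, and plays no further role in the factorization.
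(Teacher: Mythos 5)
Your proposal is correct and follows essentially the same route as the paper's own (very terse) proof: both reduce the claim to the vanishing of $\mathbb{E}(i,j')(\delta)$ via the orthogonality hypothesis $i\in{^{\perp}\mathfrak{J}}$ and bifunctoriality of $\mathbb{E}$, then conclude the factorization of $j'$ through $j$. Your write-up additionally makes explicit the exactness of $\mathscr{C}(X_1,D)\to\mathscr{C}(A,D)\to\mathbb{E}(C',D)$ from Definition~\ref{F3}(2), which the paper leaves implicit, so it is a faithful and slightly more detailed version of the same argument.
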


\begin{theorem}\label{Thm4}
  Suppose that $\mathscr{C}$ has enough projective morphisms and $\mathbb{F}\subseteq \mathbb{E}$ is an additive subfunctor with enough injective morphisms. Let $A\in\mathscr{C}$ and $K\longrightarrow P_{1}\longrightarrow P_2 \longrightarrow \cdots \longrightarrow P_{n}\stackrel{p}\longrightarrow A \stackrel{\delta}\dashrightarrow$ be a distinguished $n$-exangle with $p\colon P_n\rightarrow A$ being a projective morphism. For any $\mathbb{F}$-inflation $e$ with $e\in\mathbb{F}$-{\bf inj}, consider the following commutative diagram
    $$\xymatrix{
      K \ar[r] \ar[d]^{e} & P_1 \ar[d] \ar[r] & P_2 \ar[d] \ar[r] & \cdots  \ar[r] & P_n \ar[d] \ar[r] & A \ar@{=}[d] \ar@{-->}^{\delta}[r] & \\
      C \ar[r] & X_1 \ar[r] & X_2 \ar[r] & \cdots \ar[r] & X_n \ar[r]^{\phi} & A \ar@{-->}^{e_*\delta}[r] & ,
    }$$
   then $\phi\colon X_n\rightarrow A$ is a special ${\bf Ph}(\mathbb{F})$-precover of $A$.
  \begin{proof}
    It suffices to show that: (i) $e\in{\bf Ph}(\mathbb{F})^{\perp}$; (ii) $\phi$ is a special ${\bf Ph}(\mathbb{F})$-precover. Note that $\mathbb{F}$-{\bf inj}$\subseteq {\bf Ph}(\mathbb{F})^{\perp}$ since $({\bf Ph}(\mathbb{F}),\mathbb{F}$-${\bf inj})$ is an $n$-orthogonal pair. Hence, {\rm (i)} holds. For {\rm (ii)}, it suffices to prove that $\mathbb{E}(\phi,K)(\delta)\in\mathbb{F}(X_n,K)$ by Lemma \ref{lemma6} and the dual of Lemma \ref{lemma9}. Consider the following diagram
      $$\xymatrix@=1.2cm{
        K \ar@{=}[d] \ar[r] & Z_1 \ar[d] \ar[r] & Z_2 \ar[d] \ar[r] & \cdots \ar[r] & Z_n \ar[d] \ar[r] & X_n \ar[d]^-{\phi} \ar@{-->}^{\phi^*\delta}[r] & \\
        K \ar[d]^{e} \ar[r] & P_1 \ar[d] \ar[r] & P_2 \ar[d] \ar[r] & \cdots \ar[r] & P_n \ar[d] \ar[r] & A \ar@{=}[d] \ar@{-->}^{\delta}[r] & \\
        C  \ar[r] & X_1 \ar[r] & X_2 \ar[r] & \cdots \ar[r] & X_n  \ar[r] & A  \ar@{-->}^-{e_*\delta}[r] & .
        }$$
    By the compositions, we can get the following morphism of distinguished $n$-exangles
      $$\xymatrix@=0.8cm{
        K \ar[d]^{e} \ar[r]^{d_Z^0} & Z_1 \ar@{-->}[dl]|{h} \ar[d] \ar[r] & Z_2 \ar[d] \ar[r] & \cdots \ar[r] & Z_n \ar[d] \ar[r] & X_n \ar[dl]|{1_{X_n}} \ar[d]^{\phi} \ar@{-->}^{\phi^*\delta}[r] & \\
        C  \ar[r] & X_1 \ar[r] & X_2 \ar[r] & \cdots \ar[r] & X_n \ar[r]^{\phi} & A \ar@{-->}^-{e_*\delta}[r] & .
        }$$
    By Lemma \ref{lemma2}, there exists $h\colon Z_1\rightarrow C$ such that $hd_Z^n=e$. Since $e$ is an $\mathbb{F}$-inflation, by Lemma \ref{lemma5} we get that $d_Z^0$ is an $\mathbb{F}$-inflation. This shows that $\mathbb{E}(\phi,K)(\delta)\in\mathbb{F}(X_n,K)$.
  \end{proof}
\end{theorem}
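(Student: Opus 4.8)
The plan is to verify directly that $\phi$ meets Definition \ref{F12} for the ideal $\mathfrak{I}={\bf Ph}(\mathbb{F})$. This requires exactly two things: that $\phi\in{\bf Ph}(\mathbb{F})$, and that $\phi$ is the $\mathfrak{s}$-deflation of a distinguished $n$-exangle realizing $\mathbb{E}(A,j)(\delta)$ for some $j\in{\bf Ph}(\mathbb{F})^{\perp}$. The second requirement is almost already present in the set-up: the bottom row of the displayed diagram is a distinguished $n$-exangle realizing $e_*\delta=\mathbb{E}(A,e)(\delta)$ and $\phi$ is its $\mathfrak{s}$-deflation, so the only thing missing there is the assertion $e\in{\bf Ph}(\mathbb{F})^{\perp}$. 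Accordingly I would organise the proof around the two claims: (i) $e\in{\bf Ph}(\mathbb{F})^{\perp}$, and (ii) $\phi\in{\bf Ph}(\mathbb{F})$; the dual of Lemma \ref{lemma9} then additionally confirms that $\phi$ is a genuine ${\bf Ph}(\mathbb{F})$-precover.

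Claim (i) is immediate: the pair $({\bf Ph}(\mathbb{F}),\mathbb{F}$-${\bf inj})$ is an $n$-orthogonal pair (observed right after Definition \ref{F14}), so $\mathbb{F}$-${\bf inj}\subseteq{\bf Ph}(\mathbb{F})^{\perp}$, and $e\in\mathbb{F}$-${\bf inj}$ by hypothesis.

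For claim (ii), by Lemma \ref{lemma6} applied to the fixed distinguished $n$-exangle $K\to P_1\to\cdots\to P_n\to A\dashrightarrow$ whose $\mathfrak{s}$-deflation $p$ is a projective morphism, it is enough to show $\mathbb{E}(\phi,K)(\delta)=\phi^{*}\delta\in\mathbb{F}(X_n,K)$. I would pick a distinguished $n$-exangle $K\to Z_1\to\cdots\to Z_n\to X_n\dashrightarrow$ realizing $\phi^{*}\delta$ and call its $\mathfrak{s}$-inflation $d^0_Z$. Since $\phi^{*}\delta$ is the pullback of $\delta$ along $\phi$, there is a morphism of distinguished $n$-exangles from this exangle to $K\to P_1\to\cdots\to P_n\to A\dashrightarrow$ with outer components $1_K$ and $\phi$; composing it with the morphism $(e,1_A)$ furnished by the displayed diagram produces a morphism of distinguished $n$-exangles from the $\phi^{*}\delta$-exangle to the $e_*\delta$-exangle whose degree-$0$ component is $e\colon K\to C$ and whose degree-$(n+1)$ component is $\phi\colon X_n\to A$. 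Because $\phi$ is the $\mathfrak{s}$-deflation of the $e_*\delta$-exangle it factors through that deflation (trivially, via $1_{X_n}$), so Lemma \ref{lemma2} applied to the composite morphism shows that $e$ factors through the $\mathfrak{s}$-inflation $d^0_Z$. As $e$ is an $\mathbb{F}$-inflation that factors through the $\mathfrak{s}$-inflation $d^0_Z$, Lemma \ref{lemma5} forces $d^0_Z$ to be an $\mathbb{F}$-inflation, i.e.\ $\phi^{*}\delta\in\mathbb{F}(X_n,K)$, which is exactly what is needed; by Lemma \ref{lemma6} this gives $\phi\in{\bf Ph}(\mathbb{F})$.

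The existence of the realization of $\phi^{*}\delta$ and of all the lifts used above is guaranteed by (R0), (R1) and Lemma \ref{lemma1}, so those steps are routine. The one point that needs genuine care is the composition step: one must track precisely which maps occupy degrees $0$ and $n+1$ in the composed morphism of $n$-exangles, so that Lemma \ref{lemma2} is invoked with left-hand map $e$ and right-hand map $\phi$. Once that identification is made correctly the remainder is a short chase through Lemmas \ref{lemma5} and \ref{lemma6}, together with the $n$-orthogonality of $({\bf Ph}(\mathbb{F}),\mathbb{F}$-${\bf inj})$.
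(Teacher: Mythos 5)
Your argument is correct and follows the same route as the paper's: you show $e\in{\bf Ph}(\mathbb{F})^{\perp}$ via the $n$-orthogonal pair $({\bf Ph}(\mathbb{F}),\mathbb{F}\text{-}{\bf inj})$, reduce (via Lemma \ref{lemma6} and the dual of Lemma \ref{lemma9}) to showing $\phi^{*}\delta\in\mathbb{F}(X_n,K)$, build the composite morphism of distinguished $n$-exangles from the $\phi^{*}\delta$-exangle to the $e_*\delta$-exangle with $e$ in degree $0$ and $\phi$ in degree $n+1$, invoke Lemma \ref{lemma2} to factor $e$ through $d_Z^0$, and conclude via Lemma \ref{lemma5}. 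Your write-up is, if anything, slightly more explicit than the paper's about why Lemma \ref{lemma2} applies (the $1_{X_n}$ factorization of $\phi$), and it sidesteps a minor typo in the paper ($hd_Z^n=e$ should read $hd_Z^0=e$).
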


By Theorem \ref{Thm4}, we can get the following results.

\begin{theorem}\label{Thm6}
  Let $(\mathscr{C}, \mathbb{E},\mathfrak{s})$ be an $n$-exangulated category with enough projective morphisms. Then the ideal $\mathfrak{I}$ of $\mathscr{C}$ is special precovering if there exists an additive subfunctor $\mathbb{F}\subseteq \mathbb{E}$ with enough injective morphisms such that $\mathfrak{I}={\bf Ph}(\mathbb{F})$.
  \begin{proof}
    It suffices to show that for any $A\in\mathscr{C}$, $A$ has a speical {\bf Ph}$(\mathbb{F})$-precover. By Theorem \ref{Thm4} we finish the proof.
  \end{proof}
\end{theorem}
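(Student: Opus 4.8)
The plan is to reduce the statement entirely to Theorem \ref{Thm4}, which already manufactures a special ${\bf Ph}(\mathbb{F})$-precover under precisely the hypotheses in force here. Since $\mathfrak{I}={\bf Ph}(\mathbb{F})$, by Definition \ref{F12} it is enough to prove that every object $A\in\mathscr{C}$ admits a special ${\bf Ph}(\mathbb{F})$-precover.

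So fix $A\in\mathscr{C}$. First I would use that $\mathscr{C}$ has enough projective morphisms to choose a distinguished $n$-exangle
$$K\longrightarrow P_1\longrightarrow P_2\longrightarrow\cdots\longrightarrow P_n\stackrel{p}\longrightarrow A\stackrel{\delta}\dashrightarrow$$
in which $p\colon P_n\to A$ is a projective morphism. Next, I would apply the assumption that $\mathbb{F}$ has enough injective morphisms to the object $K$ (rather than to $A$): this produces a distinguished $n$-$\mathbb{F}$-exangle whose leftmost differential is an $\mathbb{F}$-inflation $e$ with source $K$ and with $e\in\mathbb{F}$-${\bf inj}$. Forming the pushout of $\delta$ along $e$ --- a construction available in any $n$-exangulated category via the exact realization axiom (R0) together with Lemma \ref{lemma1} --- yields a morphism of distinguished $n$-exangles
$$\xymatrix{
  K \ar[r] \ar[d]^{e} & P_1 \ar[d] \ar[r] & P_2 \ar[d] \ar[r] & \cdots \ar[r] & P_n \ar[d] \ar[r] & A \ar@{=}[d] \ar@{-->}^{\delta}[r] & \\
  C \ar[r] & X_1 \ar[r] & X_2 \ar[r] & \cdots \ar[r] & X_n \ar[r]^{\phi} & A \ar@{-->}^{e_*\delta}[r] & .
}$$
This is exactly the configuration appearing in the statement of Theorem \ref{Thm4}, so that theorem applies verbatim and tells us that $\phi\colon X_n\to A$ is a special ${\bf Ph}(\mathbb{F})$-precover of $A$. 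Since $A$ was arbitrary, ${\bf Ph}(\mathbb{F})=\mathfrak{I}$ is a special precovering ideal.

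I do not expect any genuine obstacle: the theorem is essentially a repackaging of Theorem \ref{Thm4} together with the two ``enough morphisms'' hypotheses. The only points requiring care are that ``enough injective morphisms'' must be invoked for the syzygy object $K$, not for $A$ itself, so that the $\mathbb{F}$-injective inflation $e$ has the correct source for the pushout, and that the pushout square one builds genuinely has the shape demanded by Theorem \ref{Thm4}. Both are bookkeeping matters; no new estimates, constructions, or case distinctions are needed.
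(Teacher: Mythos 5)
Your proposal is correct and matches the paper's (very terse) argument exactly: both reduce the claim to Theorem \ref{Thm4}, applying ``enough projective morphisms'' to produce the exangle ending in $A$, applying ``enough injective morphisms'' of $\mathbb{F}$ to the syzygy $K$ to obtain the $\mathbb{F}$-injective $\mathbb{F}$-inflation $e$, and pushing out along $e$ to land in the configuration of Theorem \ref{Thm4}. Your write-up merely spells out the bookkeeping the paper leaves implicit.
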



\section{Salce's Lemma}
It is well known that Salce's Lemma plays an important role in the classical approximation theory (cf. \cite{S}). Throughout this section, let $(\mathscr{C}, \mathbb{E},\mathfrak{s})$ be an extriangulated category with the Condition (WIC) and $\mathcal{T}$ be a nicely embedded $n$-cluster tilting subcategory of an extriangulated category $\mathscr{C}$. In this section, we prove Salce's Lemma in $\mathcal{T}$.

\begin{lemma}\label{lemma10}
  Recall that a nicely embedded $n$-cluster tilting subcategory $\mathcal{T}$ of an extriangulated category $\mathscr{C}$ is an $n$-exangulated category. Given a morphism $f\in\mathcal{T}(C',C)$ and an $\mathfrak{s}$-decomposable distinguished $n$-exangle $A\longrightarrow Y_{1}\longrightarrow Y_{2}\longrightarrow \cdots \longrightarrow Y_{n}\longrightarrow C \stackrel{\rho}\dashrightarrow$ in $\mathcal{T}$, then $\mathbb{E}^n(f,A)(\rho)$ has an $\mathfrak{s}$-decomposable realization.
  \begin{proof}
    Consider the $\mathbb{E}$-triangle $N_n\rightarrow Y_n\rightarrow C\stackrel{\rho_{(n)}}\dashrightarrow$, then we have the following commutative diagram
  $$\xymatrix{
    N_n \ar@{=}[d] \ar[r] & Z_n \ar[d] \ar[r] & C' \ar[d]^-{f} \ar@{-->}[r]^-{f^*\rho_{(n)}} & \\
    N_n \ar[r] & Y_n \ar[r] & C \ar@{-->}[r]^-{\rho_{(n)}} & .
  }$$
By the definition of nicely embedded $n$-cluster tilting subcategories, there exists a right $\mathcal{T}$-approximation $\gamma^n\colon X_n\rightarrow Z_n$ of $Z_n$  and it is also an $\mathfrak{s}$-deflation. Hence there exists $N^{'}_n$ such that we have the $\mathbb{E}$-triangle $N^{'}_n\rightarrow X_n\rightarrow C'\dashrightarrow$, then by {\rm (ET3)} in extriangulated categories we get a morphism of $\mathbb{E}$-triangles
  $$\xymatrix{
    N^{'}_n \ar[d]_-{f'^n} \ar[r] & X_n \ar[d]^-{f^n} \ar[r] & C' \ar[d]^-{f} \ar@{-->}[r]^-{\delta_{(n)}} & \\
    N_n \ar[r] & Y_n \ar[r] & C \ar@{-->}[r]^-{\rho_{(n)}} &
    }$$
with $X_n\in\mathcal{T}$. Then we consider the next $\mathbb{E}$-triangle $N_{n-1}\rightarrow Y_{n-1}\rightarrow N_n\stackrel{\rho_{(n-1)}}\dashrightarrow$ and $f'^n\colon N^{'}_n \rightarrow N_n$. Similarly, there exists a right $\mathcal{T}$-approximation $X_{n-1}\rightarrow Z_{n-1}$ of $Z_{n-1}$ and a morphism of $\mathbb{E}$-triangles, which can be depicted by the following diagrams
  $$\xymatrix{
    N_{n-1} \ar@{=}[d] \ar[r] & Z_{n-1} \ar[d] \ar[r] & N'_n \ar[d]^{f'^n} \ar@{-->}[r] & \\
    N_{n-1} \ar[r] & Y_{n-1} \ar[r] & N_n \ar@{-->}[r]^{\rho_{(n-1)}} &
  }
$$
$$
\xymatrix{
    N^{'}_{n-1} \ar[d]_-{f'^{n-1}} \ar[r] & X_{n-1} \ar[d]^-{f^{n-1}} \ar[r] & N^{'}_n \ar[d]^-{f'^n} \ar@{-->}[r]^-{\delta_{(n-1)}} & \\
    N_{n-1} \ar[r] & Y_{n-1} \ar[r] & N_n \ar@{-->}[r]^-{\rho_{(n-1)}} &
    }$$
with $X_{n-1}\in\mathcal{T}$. Repeating this procedure and setting $N'_0=N_0=A$, $N'_{n+1}=C'$ and $N_{n+1}=C$, we obtain the following commutative diagram
$$\xymatrix@=0.6cm{
   & & & & & N'_2 \ar[dr] \ar[dd]^-{f'^2} & & & & & & \\
  N'_0 \ar@{=}[dd] \ar[rr]  & & X_1 \ar@{-->}[rr] \ar[dd]^-{f^1} \ar[dr] & & X_2 \ar@{-->}[rr] \ar[dd]^-{f^2} \ar[ur] & & \ddots \ar@{.>}[dd] \ar@{-->}[rr] \ar[dr] & & X_n \ar[dd]^-{f^n} \ar[rr] & & N'_{n+1} \ar[dd]^-{f} \\
   & & & N'_1 \ar[dd]^-{f'^1} \ar[ur] & & N_2 \ar[dr] & & N'_n \ar[ur] \ar[dd]^-{f'^n} & & \\
  N_0 \ar[rr] & & Y_1 \ar[rr] \ar[dr] & & Y_2 \ar[rr] \ar[ur] & & \ddots \ar[dr] \ar[rr] & & Y_n \ar[rr] & & N_{n+1} \\
  & & & N_1 \ar[ur] & & & & N_n \ar[ur] & & \\
}$$
with $\mathbb{E}$-triangles $N'_{i-1}\longrightarrow X_i \longrightarrow N'_i \stackrel{\delta_{(i-1)}}\dashrightarrow$ and $N_{i-1}\longrightarrow X_i \longrightarrow N_i \stackrel{\rho_{(i-1)}}\dashrightarrow$ for $1\leq i \leq n+1$ and $X_j\in\mathcal{T}$ for $1\leq j\leq n$. By \cite[Propsition 3.18 and 3.19]{HLN2}, we obtain
\begin{flalign*}
  \delta_{(1)}\cup\delta_{(2)}\cup\cdots\cup\delta_{(n)}& =((f'^1)^*\rho_{(1)}\cup\delta_{(2)})\cup\cdots\cup\delta_{(n)}\\
   & =\rho_{(1)}\cup((f'^1)_*\delta_{(2)})\cup\cdots\cup\delta_{(n)}\\
   & =\rho_{(1)}\cup((f'^2)^*\rho_{(2)})\cup\cdots\cup\delta_{(n)}\\
   & \cdots\\
   & =(\rho_{(1)}\cup\cdots\cup\rho_{(n-1)})\cup(f^*\delta_{(n)})\\
   & =\mathbb{E}^n(f,A)(\rho_{(1)}\cup\cdots\cup\rho_{(n)})\\
   & =\mathbb{E}^n(f,A)(\rho).
\end{flalign*}
Hence, the above construction gives an $\mathfrak{s}$-decomposable realization for $\mathbb{E}^n(f,A)(\delta)$.
  \end{proof}
\end{lemma}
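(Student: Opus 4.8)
The plan is to produce an $\mathfrak{s}$-decomposable realization of $\mathbb{E}^n(f,A)(\rho)=f^{*}\rho$ explicitly, i.e.\ a complex $X^{\bullet}$ of the form $A\to X_1\to\cdots\to X_n\to C'$ in $\mathcal{T}$ together with an $\mathfrak{s}$-decomposition in the sense of Definition \ref{F10}. I would build this one $\mathbb{E}$-triangle at a time, starting from the $C'$-end, where the pullback along $f$ occurs, and descending toward $A$. First, fix an $\mathfrak{s}$-decomposition of the exangle realizing $\rho$: $\mathbb{E}$-triangles $M_{i-1}\to Y_i\to M_i\stackrel{\rho_{(i)}}\dashrightarrow$ for $1\le i\le n$ with $M_0=A$, $M_n=C$, each $Y_i\in\mathcal{T}$, and $\rho=\rho_{(1)}\cup\cdots\cup\rho_{(n)}$; this is exactly what $\mathfrak{s}$-decomposability supplies.

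Then I would perform a descending induction on $i$, carrying at each stage a morphism $g_i\colon M'_i\to M_i$, with $M'_n:=C'$ and $g_n:=f$. Given $g_i$, pull the $\mathbb{E}$-triangle $M_{i-1}\to Y_i\to M_i\stackrel{\rho_{(i)}}\dashrightarrow$ back along $g_i$ to get $M_{i-1}\to Z_i\to M'_i\stackrel{g_i^{*}\rho_{(i)}}\dashrightarrow$ with its comparison map $Z_i\to Y_i$. Since $\mathcal{T}$ is functorially finite, Remark \ref{G1} yields a right $\mathcal{T}$-approximation $\gamma_i\colon X_i\to Z_i$ that is an $\mathfrak{s}$-deflation; then $X_i\to Z_i\to M'_i$, being a composite of $\mathfrak{s}$-deflations, is again an $\mathfrak{s}$-deflation, so it fits into an $\mathbb{E}$-triangle $M'_{i-1}\to X_i\to M'_i\stackrel{\delta_{(i)}}\dashrightarrow$ with $X_i\in\mathcal{T}$. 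Applying the dual of (ET3) to the commuting square $(\gamma_i,1_{M'_i})$ and composing the result with the pullback morphism produces a morphism of $\mathbb{E}$-triangles over $(g_{i-1},\,\cdot\,,g_i)$ from $\delta_{(i)}$ to $\rho_{(i)}$, hence a morphism $g_{i-1}\colon M'_{i-1}\to M_{i-1}$ with $(g_{i-1})_{*}\delta_{(i)}=g_i^{*}\rho_{(i)}$. Declaring the differentials of $X^{\bullet}$ to be the composites $X_i\to M'_i\to X_{i+1}$, the collection of $\mathbb{E}$-triangles $M'_{i-1}\to X_i\to M'_i\stackrel{\delta_{(i)}}\dashrightarrow$ constitutes an $\mathfrak{s}$-decomposition of $X^{\bullet}$.

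It remains to identify the $A$-end and to compute the cup product. For the $A$-end one must check that at $i=1$ the object $M'_0$ is $A$ and $g_0=1_A$: because $A\in\mathcal{T}$, the inflation $A\to Z_1$ of the last pullback factors through $\gamma_1$, Condition (WIC) makes the factoring map $A\to X_1$ an $\mathfrak{s}$-inflation, and then, with the nicely embedded hypothesis in force, one uses the octahedral axiom to see that the cokernel of $A\to X_1$ is $M'_1$, so the successive approximations have not enlarged the object over $A$. (The bare existence of \emph{some} $\mathfrak{s}$-decomposable realization of $f^{*}\rho$ is in any case immediate from \cite[Theorem 3.41]{HLN2}, as $A,C'\in\mathcal{T}$; the construction above is carried out in order to record the explicit comparison with the exangle of $\rho$.) With $g_0=1_A$, so that $\delta_{(1)}=g_1^{*}\rho_{(1)}$, the identity $f^{*}\rho=\delta_{(1)}\cup\cdots\cup\delta_{(n)}$ follows by sliding each $g_i$ across the cup product, using the naturality formulas $(h^{*}\xi)\cup\theta=\xi\cup(h_{*}\theta)$ and $\xi\cup(h^{*}\theta)=h^{*}(\xi\cup\theta)$ from \cite[Propositions 3.18 and 3.19]{HLN2} together with the step relations $(g_{i-1})_{*}\delta_{(i)}=g_i^{*}\rho_{(i)}$, which gives
\[
\delta_{(1)}\cup\cdots\cup\delta_{(n)}=f^{*}(\rho_{(1)}\cup\cdots\cup\rho_{(n)})=\mathbb{E}^n(f,A)(\rho).
\]

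I expect the main obstacle to be precisely the control of the $A$-end: guaranteeing that the chain of right $\mathcal{T}$-approximations terminates on $A$ itself rather than on some object merely mapping onto $A$. This is the one step that is not a plain diagram chase, and it is where the nicely embedded structure and Condition (WIC) are genuinely needed; the construction of the intermediate $\mathbb{E}$-triangles and the cup-product bookkeeping are then routine.
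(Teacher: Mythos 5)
Your proposal mirrors the paper's proof very closely: both proceed by descending induction from the $C'$-end, pulling back each $\mathbb{E}$-triangle of the given $\mathfrak{s}$-decomposition along the inductively constructed morphism $g_i\colon M'_i\to M_i$, taking a right $\mathcal{T}$-approximation $\gamma_i\colon X_i\to Z_i$ of the pulled-back middle term, composing deflations to get the new $\mathbb{E}$-triangle, and finally assembling the extension class by the cup-product naturality rules of \cite[Propositions 3.18 and 3.19]{HLN2}. You also correctly observe that the bare existence of an $\mathfrak{s}$-decomposable realization of $f^{*}\rho$ is already supplied by \cite[Theorem 3.41]{HLN2} since $A,C'\in\mathcal{T}$, which is all that the lemma literally asserts; the descending construction is really carried out because the explicit comparison morphism is what gets used later (in Lemma \ref{lemma8}).

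Your instinct to single out the $A$-end as the delicate point is sound, and in fact the paper's own proof passes over it: after the last approximation step the cocone of $X_1\to N'_1$ is an object $N'_0$ fitting into an $\mathbb{E}$-triangle $K\to N'_0\to A\dashrightarrow$ where $K$ is the cocone of $\gamma_1$, and the paper simply declares $N'_0=N_0=A$. Your attempted repair, however, does not quite close this: factoring the inflation $A\to Z_1$ through $\gamma_1$ and invoking (WIC) yields an inflation $\alpha\colon A\to X_1$, and one can check $\alpha$ factors through $N'_0\to X_1$ as $\alpha=(N'_0\to X_1)\circ\beta$ with $\beta\colon A\to N'_0$ again an inflation by (WIC); but what is needed is that $\beta$ is an \emph{isomorphism} (equivalently, that the $\mathbb{E}$-triangle $K\to N'_0\to A\dashrightarrow$ splits), and neither the octahedral axiom nor the nicely-embedded hypothesis gives that directly. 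So your proposal correctly identifies the step where more is needed, but the argument you sketch for it is not yet complete — and since the paper's proof is silent on the same point, you are at least as rigorous as the source. If you want a fully airtight argument, it is cleaner to cite \cite[Theorem 3.41]{HLN2} for the existence of the $\mathfrak{s}$-decomposable realization of $f^{*}\rho$ and then use axiom (R0) for $\mathfrak{s}^n$ to lift the morphism of extensions $(1_A,f)\colon f^{*}\rho\to\rho$ to a morphism of distinguished $n$-exangles, rather than trying to produce both simultaneously by the approximation recursion.
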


We need the following notions.

\begin{definition}\label{F17}
  {\em Suppose that $(\mathscr{A},\mathbb{E},\mathfrak{s})$ is an $n$-exangulated category. An object $E\in\mathscr{A}$ is called $\mathfrak{s}$-injective if, for any distinguished $n$-exangle
  $$
    A\longrightarrow X_{1}\longrightarrow X_2 \longrightarrow \cdots \longrightarrow X_{n}\longrightarrow C \stackrel{\delta}\dashrightarrow,
  $$
  the induced morphism $\mathscr{A}(X_{1},E) \longrightarrow \mathscr{A}(A,E)$ is an epimorphism, or equivalently, $\mathbb{E}(C,E)=0$ for any $C\in\mathscr{A}$. An $n$-exangulated category has enough $\mathfrak{s}$-injective objects if for any $X\in\mathscr{A}$, there exists a distinguished $n$-exangle
  $$
    X\longrightarrow E_{1}\longrightarrow E_2 \longrightarrow \cdots \longrightarrow E_{n}\longrightarrow X' \stackrel{\delta}\dashrightarrow
  $$
  with $E_1,\cdots,E_n$ being $\mathfrak{s}$-injective objects.}
\end{definition}

\begin{definition}\label{F18}
  {\em Let $(\mathscr{A},\mathbb{E},\mathfrak{s})$ be an $n$-exangulated category and $\mathfrak{I}$ be an ideal of $\mathscr{A}$. Let
    $$\xymatrix{
      X_0 \ar[d]_{f^0} \ar[r] & X_1 \ar[d]_{f^1} \ar[r] & X_2 \ar[d]_{f^2} \ar[r] & \cdots  \ar[r] & X_n \ar[d]_{f^n} \ar[r] & X_{n+1} \ar[d]_{f^{n+1}} \ar@{-->}^{\delta}[r] & \\
      Y_0 \ar[r] & Y_1 \ar[r] & Y_2 \ar[r] & \cdots \ar[r] & Y_n \ar[r] & Y_{n+1} \ar@{-->}^{\eta}[r] &
      }$$
  be a morphism of distinguished $n$-exangles. We say $\mathfrak{I}$ is closed under $n$-extensions by $\mathfrak{s}$-injective objects if $f^0\in\mathfrak{I}$ and $X_{n+1}$ is an $\mathfrak{s}$-injective object, then one can deduce that $f^i\in\mathfrak{I}$, $i=1,2,\cdots,n$. Dually one can define that $\mathfrak{I}$ is closed under $n$-coextensions by $\mathfrak{s}$-projective objects.}
\end{definition}

Now we can state Salce's Lemma.
\begin{theorem}\label{Thm3}
  (Salce's Lemma) Let $(\mathcal{T},\mathbb{E}^n,\mathfrak{s}^n)$ be a nicely embedded $n$-cluster tilting subcategory of $(\mathscr{C},\mathbb{E},\mathfrak{s})$ which satisfies the Condition (WIC). Suppose that $(\mathfrak{I},\mathfrak{J})$ is an $n$-ideal cotorsion pair such that $\mathfrak{I}$ is closed under $n$-coextensions by $\mathfrak{s}^n$-projective objects and $\mathfrak{J}$ is closed under $n$-extensions by $\mathfrak{s}^n$-injective objects. If $\mathcal{T}$ has enough $\mathfrak{s}^n$-injective objects, then $\mathfrak{I}$ is a special precovering ideal if and only if $\mathfrak{J}$ is a special preenveloping ideal.
\end{theorem}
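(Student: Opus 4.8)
The overall strategy is to imitate the classical proof of Salce's Lemma, with ``enough $\mathfrak{s}^n$-injective objects'' playing the role of enough injectives and the closure hypotheses replacing the statement that injectives (resp.\ projectives) lie in one half of the cotorsion pair. By symmetry it suffices to prove in full the implication ``$\mathfrak{I}$ special precovering $\Rightarrow$ $\mathfrak{J}$ special preenveloping'', the converse being obtained by the dual argument (now using that $\mathfrak{I}$ is closed under $n$-coextensions by $\mathfrak{s}^n$-projective objects together with the duals of the constructions below). So assume $\mathfrak{I}$ is a special precovering ideal. By Proposition~\ref{Prop5} we have $\mathfrak{I}={\bf Ph}(\mathbb{F})$ for $\mathbb{F}:={\bf PB}(\mathfrak{I})$, and, as observed after Proposition~\ref{Prop4}, $\mathbb{F}$-${\bf inj}=\mathfrak{I}^{\perp}=\mathfrak{J}$; moreover, since $\mathfrak{I}={\bf Ph}(\mathbb{F})$, every distinguished $n$-$\mathbb{F}$-exangle is automatically obtained from an $n$-$\mathbb{F}$-phantom morphism. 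Hence, by Corollary~\ref{Coro3}, it is enough to show that $\mathbb{F}$ has enough injective morphisms, i.e.\ that each $A\in\mathcal{T}$ admits a distinguished $n$-$\mathbb{F}$-exangle whose $\mathfrak{s}^n$-inflation lies in $\mathfrak{J}$.

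To build such an exangle, fix $A\in\mathcal{T}$. Using that $\mathcal{T}$ has enough $\mathfrak{s}^n$-injective objects, choose a distinguished $n$-exangle $A\stackrel{d^0}{\longrightarrow}E_1\longrightarrow\cdots\longrightarrow E_n\stackrel{p}{\longrightarrow}A'\stackrel{\theta}{\dashrightarrow}$ with each $E_i$ $\mathfrak{s}^n$-injective; note that $d^0\in\mathfrak{J}$, since $\mathbb{E}^n(-,E_1)=0$ forces $\mathbb{F}(W,d^0)=0$ for every $W\in\mathcal{T}$, i.e.\ $d^0\in\mathbb{F}$-${\bf inj}=\mathfrak{J}$. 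Since $\mathfrak{I}$ is special precovering, pick a special $\mathfrak{I}$-precover $i\colon V_n\to A'$; it lies in $\mathfrak{I}={\bf Ph}(\mathbb{F})$ and sits in the defining diagram of Definition~\ref{F12} with some $\sigma\in\mathbb{E}^n(A',B)$ and $j\colon B\to B'$ in $\mathfrak{J}$. Now form $i^{*}\theta=\mathbb{E}^n(i,A)(\theta)\in\mathbb{E}^n(V_n,A)$; because $i\in\mathfrak{I}$ this extension lies in ${\bf PB}(\mathfrak{I})=\mathbb{F}$, and by Lemma~\ref{lemma10} it has an $\mathfrak{s}^n$-decomposable realization $A\stackrel{e}{\longrightarrow}W_1\longrightarrow\cdots\longrightarrow W_n\stackrel{q}{\longrightarrow}V_n\stackrel{i^{*}\theta}{\dashrightarrow}$ whose construction also supplies a morphism of distinguished $n$-exangles $(1_A,f^1,\dots,f^n,i)$ from it down to the $\theta$-exangle (in particular $d^0=f^1\circ e$). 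Since $i\in\mathfrak{I}={}^{\perp}\mathfrak{J}$, this exangle realizes $\mathbb{E}^n(i,A)(\theta)$ exactly as required in (the dual of) Definition~\ref{F12}; the only thing left is to verify that its $\mathfrak{s}^n$-inflation $e$ lies in $\mathfrak{J}$, which then makes $e\colon A\to W_1$ a special $\mathfrak{J}$-preenvelope of $A$ (by the duals of Proposition~\ref{Prop3} and Lemma~\ref{lemma9}).

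This last point is the main obstacle. The relation $d^0=f^1\circ e$ with $d^0\in\mathfrak{J}$ only exhibits $e$ as a left divisor of an element of $\mathfrak{J}$, which is not sufficient; more precisely, for $m\in\mathfrak{I}$ and $\xi\in\mathbb{E}^n(Q,A)$ one gets $f^1_{*}e_{*}(m^{*}\xi)=d^0_{*}(m^{*}\xi)=0$ (as $\mathbb{E}^n(-,E_1)=0$), so $e_{*}(m^{*}\xi)\in\ker f^1_{*}$, and one must argue this kernel is trivial on the relevant classes. This is where the hypothesis that $\mathfrak{J}$ is closed under $n$-extensions by $\mathfrak{s}^n$-injective objects enters. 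The plan is to pass to the $\mathbb{E}$-triangles of the $\mathfrak{s}^n$-decompositions of $i^{*}\theta$ and of $\theta$, compare them degreewise using the good lifts and mapping cones furnished by (EA2) and Lemmas~\ref{lemma1} and \ref{lemma2}, use Lemma~\ref{lemma5} to propagate the property of being an $\mathbb{F}$-inflation, and thereby realize $e$ (or the components $f^{i}$) as the degree-$0$ map of a morphism of distinguished $n$-exangles with terminal object one of the $\mathfrak{s}^n$-injective $E_i$ and with its degree-$0$ component built from $d^0\in\mathfrak{J}$; Definition~\ref{F18} then forces the intermediate components, and ultimately $e$, into $\mathfrak{J}$. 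Reconciling the $\mathbb{E}$-triangles, which live in the ambient extriangulated category $\mathscr{C}$, with the $n$-exangles of $(\mathcal{T},\mathbb{E}^n,\mathfrak{s}^n)$, and keeping track of which extension is pulled back or pushed out at each stage, is the technical core; the special (not merely approximating) nature of $i$, i.e.\ the presence of $j\in\mathfrak{J}$, is precisely what makes this bookkeeping close, just as in the classical situation.

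Granting $e\in\mathfrak{J}$, the ideal $\mathbb{F}={\bf PB}(\mathfrak{I})$ has enough injective morphisms, hence enough special injective morphisms, so Corollary~\ref{Coro3} gives that $\mathfrak{J}=\mathbb{F}$-${\bf inj}$ is a special preenveloping ideal. For the converse, assume $\mathfrak{J}$ is special preenveloping; dualizing the entire argument, the dual of Proposition~\ref{Prop5} writes $\mathfrak{J}={\bf CoPh}(\mathbb{G})$ with $\mathbb{G}:={\bf PO}(\mathfrak{J})$ and $\mathbb{G}$-${\bf proj}={}^{\perp}\mathfrak{J}=\mathfrak{I}$, the dual construction (building the required $n$-exangles from $\mathfrak{s}^n$-injective data and special $\mathfrak{J}$-preenvelopes) together with the hypothesis that $\mathfrak{I}$ is closed under $n$-coextensions by $\mathfrak{s}^n$-projective objects shows that $\mathbb{G}$ has enough special projective morphisms, and the dual of Corollary~\ref{Coro3} then yields that $\mathfrak{I}={}^{\perp}\mathfrak{J}=\mathbb{G}$-${\bf proj}$ is special precovering. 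Combining the two implications proves the theorem.
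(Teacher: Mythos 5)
Your overall reduction agrees with the paper's: take $A\in\mathcal T$, choose an $n$-exangle $A\stackrel{d^0}\to E_1\to\cdots\to E_n\to C\stackrel{\delta}\dashrightarrow$ with $\mathfrak{s}^n$-injective $E_i$, take a special $\mathfrak I$-precover $i\colon X_n\to C$, pull $\delta$ back along $i$ to obtain a candidate $n$-exangle $A\stackrel{j}\to U_1\to\cdots\to U_n\to X_n\stackrel{i^*\delta}\dashrightarrow$, and observe (via Lemma \ref{lemma9}) that it suffices to show $j\in\mathfrak J$. This is also where the paper lands, and that far the two arguments are equivalent up to whether one routes through $\mathbb F={\bf PB}(\mathfrak I)$/Corollary~\ref{Coro3} (you) or through Lemma~\ref{lemma9} directly (the paper).

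However, the proof is not complete. You explicitly flag the crux --- ``This last point is the main obstacle\ldots the only thing left is to verify that\ldots $e$ lies in $\mathfrak J$'' --- and then give only a prose outline of a ``plan'' (``pass to the $\mathbb E$-triangles\ldots compare them degreewise\ldots Definition~\ref{F18} then forces\ldots''), followed by ``Granting $e\in\mathfrak J$\ldots''. That granting is precisely the statement to be proved, so the key step is asserted, not established. The observation $d^0=f^1\circ e$ with $d^0\in\mathfrak J$ is correctly noted to be useless on its own, but no actual mechanism is produced that places $e$ (or any morphism $e$ factors through) in $\mathfrak J$.

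What the paper actually does at this point, and what is missing from your argument, is Lemma \ref{lemma8}: a step-by-step lifting lemma that, given a commutative ``outer frame'' of $\mathfrak{s}^n$-decomposable $n$-exangles and morphisms at the two ends, fills in all intermediate components using right $\mathcal T$-approximations and the weak-pullback Lemma \ref{lemma7}, together with the injectivity of $\mathscr C(U_i,N_i)\to\mathscr C(U_i,Y_{i+1})$ coming from the nicely-embedded hypothesis. Applying Lemma \ref{lemma8} iteratively, the paper builds a ladder of $n$-exangles interpolating between the special-precover data $(Y\to Z_\bullet\to C,\ j'\colon Y\to C')$ and the $\mathfrak s^n$-injective resolution $A\to E_\bullet\to C$. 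Reading this diagram two ways produces one morphism of distinguished $n$-exangles whose source ends in $E_1$ (an $\mathfrak{s}^n$-injective object), whose degree-$0$ component is $j'\in\mathfrak J$, and whose degree-$n$ component $j^1$ therefore lands in $\mathfrak J$ by the closure hypothesis (Definition \ref{F18}). Finally the commutativity of the ladder shows $j$ factors through $j^1$, whence $j\in\mathfrak J$. That whole mechanism --- Lemma \ref{lemma8} and the resulting factorization of $j$ through a map $j^1\in\mathfrak J$ --- is the part your outline gestures at but does not supply, and without it the argument does not close.

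A smaller technical point: Corollary~\ref{Coro3} requires enough \emph{special} injective morphisms, not merely enough injective morphisms; your parenthetical that ``every distinguished $n$-$\mathbb F$-exangle is automatically obtained from an $n$-$\mathbb F$-phantom morphism'' is essentially right given $\mathbb F={\bf PB}(\mathfrak I)$ and $\mathfrak I={\bf Ph}(\mathbb F)$, but it should be stated and checked rather than assumed; the paper sidesteps this by invoking Lemma~\ref{lemma9} directly on the pullback diagram instead of passing through the $\mathbb F$-machinery.
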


Before proving Theorem \ref{Thm3}, we need the following lemmas. For the convenience of readers, we give the proof of Lemma \ref{lemma7} {\rm\cite[Lemma 3.16]{LW}}.

\begin{lemma}
  \label{lemma7}
  Let $A\stackrel{f}{\longrightarrow} B\stackrel{g}{\longrightarrow} C\stackrel{\delta}{\dashrightarrow}$ and $A\stackrel{u}{\longrightarrow} D\stackrel{v}{\longrightarrow} E\stackrel{\eta}{\dashrightarrow}$ be two $\mathbb{E}$-triangles in $\mathscr{C}$. If there exists the following commutative diagram
    $$\xymatrix{
      A\ar@{=}[d] \ar[r]^{f} & B \ar[d]_{s} \ar[r]^{g} & C \ar[d]_{t}\ar@{-->}[r]^{\delta}&   \\
      A \ar[r]^{u} & D \ar[r]^{v} & E \ar@{-->}[r]^{\eta} &.
    }$$
Then the right square is a weak pullback, namely, if we have the morphisms $i\colon M\longrightarrow C$ and $j\colon M\longrightarrow D$ such that $ti=vj$, then there exists $l\colon M\rightarrow B$ such that $sl=j$ and $gl=i$.
  \begin{proof}
By \cite[Lemma  3.2]{NP}, we have that $v^{\ast}\eta=0$ . Using
  $$
    \delta_{\sharp}(\delta)=i^{\ast}\delta=i^{\ast}t^{\ast}\eta=(ti)^{\ast}\eta=(vj)^{\ast}\eta=j^{\ast}v^{\ast}\eta=0
  $$
  and the exactness of
  $$
    \mathscr{C}(M,B)\longrightarrow\mathscr{C}(M,C)\longrightarrow\mathbb{E}(M,A),
  $$
  we obtain that there exists $\rho:M\longrightarrow B$ such that $i=g\rho$. Moreover, by
  $$
    v(s\rho-j)=vs\rho-vj=tg\rho-vj=ti-vj=0
  $$
  and the exactness of
  $$
    \mathscr{C}(M,A)\longrightarrow\mathscr{C}(M,D)\longrightarrow\mathscr{C}(M,E),
  $$
  there exists $\beta:M\longrightarrow A$ such that $u\beta=s\rho-j$. Hence, we have that
  $$
    j=s\rho-u\beta=s\rho-sf\beta=s(\rho-f\beta)~\text{and}~g(\rho-f\beta)=g\rho-gf\beta=i.
  $$
  Taking $l=\rho-f\beta$, we have the following commutative diagram
    $$\xymatrix{
    M \ar@/_/[ddr]_{j} \ar@/^/[drr]^{i}
      \ar@{.>}[dr]|-{l}                   \\
     & B \ar[d]^{s} \ar[r]_{g} & C \ar[d]_{t}    \\
     & D \ar[r]^{v}     & E.
     }$$
  \end{proof}
\end{lemma}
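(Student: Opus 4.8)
The plan is to prove this by a diagram chase inside the extriangulated category $\mathscr{C}$, using the long exact $\mathrm{Hom}$--$\mathbb{E}$ sequences attached to the two given $\mathbb{E}$-triangles together with the compatibility relations carried by the morphism of $\mathbb{E}$-triangles $(1_A,s,t)$. Concretely, that morphism forces $\delta = t^{*}\eta$, the defining commutative square gives $vs = tg$ and $sf = u$, and one always has $gf = 0$; in addition, by \cite[Lemma 3.2]{NP} applied to the $\mathbb{E}$-triangle $A\stackrel{u}{\longrightarrow} D\stackrel{v}{\longrightarrow} E\stackrel{\eta}{\dashrightarrow}$, we have $v^{*}\eta = 0$. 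These five identities are all the input that is needed.

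First I would produce a preliminary lift of $i$. Given $i\colon M\to C$ and $j\colon M\to D$ with $ti = vj$, I compute $\delta_{\sharp}(i) = i^{*}\delta = i^{*}t^{*}\eta = (ti)^{*}\eta = (vj)^{*}\eta = j^{*}(v^{*}\eta) = 0$. Applying the exactness built into the notion of $\mathbb{E}$-triangle (Definition \ref{F3}(1), for $n=1$) to $A\stackrel{f}{\longrightarrow} B\stackrel{g}{\longrightarrow} C\stackrel{\delta}{\dashrightarrow}$, the sequence $\mathscr{C}(M,B)\stackrel{g_{*}}{\longrightarrow}\mathscr{C}(M,C)\stackrel{\delta_{\sharp}}{\longrightarrow}\mathbb{E}(M,A)$ is exact, so there is $\rho\colon M\to B$ with $g\rho = i$.

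Next I would correct $\rho$ so that $s\rho$ becomes exactly $j$. Consider $s\rho - j\colon M\to D$: using $vs = tg$ and $g\rho = i$ one gets $v(s\rho - j) = tg\rho - vj = ti - vj = 0$, so by exactness of $\mathscr{C}(M,A)\stackrel{u_{*}}{\longrightarrow}\mathscr{C}(M,D)\stackrel{v_{*}}{\longrightarrow}\mathscr{C}(M,E)$ (again Definition \ref{F3}(1), now for the second $\mathbb{E}$-triangle) there is $\beta\colon M\to A$ with $u\beta = s\rho - j$. Setting $l := \rho - f\beta$, one checks $sl = s\rho - sf\beta = s\rho - u\beta = j$ and $gl = g\rho - gf\beta = i$, which is precisely the desired factorization, so the right square of the morphism of $\mathbb{E}$-triangles is a weak pullback.

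Since this is a purely formal chase, I do not expect a genuine obstacle; the only care required is bookkeeping --- tracking which of the relations $\delta = t^{*}\eta$, $v^{*}\eta = 0$, $vs = tg$, $sf = u$, $gf = 0$ is invoked at each step, and which of the two exactness statements (for the $\delta$-triangle or for the $\eta$-triangle) is being used.
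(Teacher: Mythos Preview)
Your proof is correct and follows essentially the same route as the paper's: both compute $i^{*}\delta=j^{*}v^{*}\eta=0$ to produce a preliminary lift $\rho$ via exactness for the first $\mathbb{E}$-triangle, then correct it by $\beta$ obtained from exactness for the second $\mathbb{E}$-triangle, setting $l=\rho-f\beta$. Your presentation is arguably cleaner in that you explicitly record the relations $\delta=t^{*}\eta$, $vs=tg$, $sf=u$, $gf=0$, $v^{*}\eta=0$ at the outset, but the argument is identical.
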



\begin{lemma}\label{lemma8}
  Consider the following commutative diagram
    $$\xymatrix@=0.4cm{
      &  & &  & &  & &  & &  & & U_{n+1} \ar[dl]^-{f^{n+1}} \ar[dd]^-{g^{n+1}}  & & \\
       & &  & &  & &  & &  & & V_{n+1}  \ar[dd]^{h^{n+1}} & & & \\
      & X_0 \ar[rr] \ar[dl]_-{f^0} & & X_1 \ar[rr] \ar[dl] & & X_2 \ar[rr] \ar[dl] & & \cdots \ar[rr] \ar@{..>}[dl] & & X_n \ar[rr] \ar[dl] & & X_{n+1} \ar[dl] \ar@{-->}[rr]^-{\eta} & & \\
      Y_0 \ar[rr] & & Y_1 \ar[rr] & & Y_2 \ar[rr] & & \cdots \ar[rr] & & Y_n \ar[rr] & & Y_{n+1} \ar@{-->}[rr]^-{\delta} & & &
     }$$
  where $X_0\longrightarrow X_{1}\longrightarrow X_2 \longrightarrow \cdots \longrightarrow X_{n}\longrightarrow X_{n+1} \stackrel{\eta}\dashrightarrow$ and $Y_0\longrightarrow Y_{1}\longrightarrow Y_2 \longrightarrow \cdots \longrightarrow Y_{n}\longrightarrow Y_{n+1} \stackrel{\delta}\dashrightarrow$ are $\mathfrak{s}^n$-decomposable objects and $U_{n+1},V_{n+1}\in\mathcal{T}$. Then it can be completed as follows
   $$\xymatrix@=0.4cm{
      & X_0 \ar[rr]^{d_U^0} \ar@{=}[dd] \ar[dl]_{f^0} & & U_1 \ar[rr] \ar[dl] \ar[dd] & & U_2 \ar[rr] \ar[dl] \ar[dd] & & \cdots \ar[rr] \ar@{..>}[dd] \ar@{..>}[dl] & & U_n \ar[rr]^{d_U^n} \ar[dd]^{g^n} \ar[dl] & & U_{n+1} \ar[dl]^{f^{n+1}} \ar[dd]^-{g^{n+1}} \ar@{-->}[rr] & & \\
      Y_0 \ar[rr] \ar@{=}[dd] & & V_1 \ar[dd] \ar[rr] & & V_2 \ar[dd] \ar[rr] & & \cdots \ar[rr] \ar@{..>}[dd] & & V_n \ar[dd] \ar[rr] & & V_{n+1} \ar@{-->}[rr] \ar[dd]^-{h^{n+1}} & & & \\
      & X_0 \ar[rr] \ar[dl]_-{f^0} & & X_1 \ar[rr] \ar[dl] & & X_2 \ar[rr] \ar[dl] & & \cdots \ar[rr] \ar@{..>}[dl] & & X_n \ar[rr] \ar[dl] & & X_{n+1} \ar[dl]^{l^{n+1}} \ar@{-->}[rr]^-{\eta} & & \\
      Y_0 \ar[rr] & & Y_1 \ar[rr] & & Y_2 \ar[rr] & & \cdots \ar[rr] & & Y_n \ar[rr] & & Y_{n+1} \ar@{-->}[rr]^-{\delta} & & &
     }$$

\begin{proof}
    For convenience, we use the notations to denote the following morphisms:
    \begin{flalign*}
      d_U^i&\colon U_i\rightarrow U_{i+1}  &  d_V^i&\colon V_i\rightarrow V_{i+1}  & d_X^i&\colon X_i\rightarrow X_{i+1}  & d_Y^i&\colon Y_i\rightarrow Y_{i+1}\\
      g^{i}&\colon U_{i}\rightarrow X_{i}  & f^i&\colon U_i\rightarrow V_i  & l^i&\colon X_i\rightarrow Y_i  & h^i&\colon V_i\rightarrow Y_i.
    \end{flalign*}
    By Lemma \ref{lemma10}, we have the distinguished $n$-exangle $X_0 \longrightarrow U_1 \longrightarrow U_2 \longrightarrow \cdots \longrightarrow U_n \stackrel{d_U^n}\longrightarrow U_{n+1} \stackrel{(g_{n+1})^*\eta}\dashrightarrow$ with $U_1,U_2,\cdots,U_{n}\in\mathcal{T}$. It suffices to show that there exists morphisms $U_i\rightarrow V_i$ such that the following square is commutative for $1\leq i\leq n$
    $$\xymatrix@=0.4cm{
      U_i \ar[dd] \ar[rr] \ar[dr] & & U_{i+1} \ar[dd] \ar[dr]    \\
       & V_i \ar[dd] \ar[rr] & & V_{i+1} \ar[dd]  \\
       X_i \ar[dr] \ar[rr] & & X_{i+1} \ar[dr] & \\
       & Y_i \ar[rr] & & Y_{i+1}.
    }$$
    
    We show by descending induction on $i$. 
    
    Consider the first $\mathbb{E}$-triangle in the $\mathfrak{s}$-decomposable $n$-exangle 
    $$
      A\longrightarrow X_{1}\longrightarrow X_2 \longrightarrow \cdots \longrightarrow X_{n}\longrightarrow C \stackrel{\delta}\dashrightarrow.
    $$
    By Lemma \ref{lemma10}, we have the following commutative diagram
      $$\xymatrix{
         & V_{n} \ar[d]^{v^n} \ar[r]^{d_V^n} & V_{n+1} \ar@{=}[d] \\
      N_n \ar@{=}[d] \ar[r] & Z_n \ar[d] \ar[r]^{d_Z^n} & V_{n+1} \ar[d]^-{h^{n+1}} \ar@{-->}[r] & \\
        N_n \ar[r] & Y_n \ar[r]^{d_Y^n} & Y_{n+1} \ar@{-->}[r]^-{\delta_{(n)}} & ,
      }$$
    where $v^n$ is a right $\mathcal{T}$-approximation of $Z_{n}$ and moreover it is an $\mathfrak{s}$-deflation. By Lemma \ref{lemma8}, the bottom right square is a weak pullback. Noting that $h^{n+1}f^{n+1}d_U^n=l^{n+1}g^{n+1}d_U^n=l^{n+1}d_X^ng^n=d_Y^nl^ng^n$, we get that there is a morphism $t_n\colon U_n\rightarrow Z_n$ such that the following diagram is commutative
    \begin{flalign} \label{diyilahui}
     \xymatrix{
      U_n \ar@/_/[ddr]_{l^ng^n} \ar@/^/[drr]^{f^{n+1}d_U^n}
          \ar@{.>}[dr]|{t^n}    \\
       & Z_n \ar[d]^{s^n} \ar[r]_{d_Z^n} & V_{n+1} \ar[d]^{h^{n+1}}  \\
       & Y_n \ar[r]^{d_Y^n}     & Y_{n+1}.}\end{flalign}
    Since $v^n$ is a right $\mathcal{T}$-approximation of $Z_{n}$, there exists $f^n\colon U_n\rightarrow V_n$ such that $v^nf^n=t^n$. Hence we can replace $Z_n$ with $V_n$ in the above diagram and get a new commutative diagram
      $$\xymatrix@=0.3cm{
      U_n \ar[dd]_{g^n} \ar[rr]^{d_U^n} \ar@{.>}[dr]|{f^n} & & U_{n+1} \ar[dr]^{f^{n+1}}    \\
       & V_n \ar[dd]^{s^nv^n} \ar[rr]_{d_V^n} & & V_{n+1} \ar[dd]_{h^{n+1}}  \\
       X_n \ar[dr]^{l^n} & & & \\
       & Y_n \ar[rr]^{d_Y^n}  &  & Y_{n+1}.
       }$$
    So we complete the first step. For $1\leq i \leq n-1$, we have the following commutative diagram
      $$\xymatrix{
        N_{i-1} \ar@{=}[d] \ar[r] & Z_{i} \ar[d] \ar[r] & M_i \ar[d]^{{h'}^i} \ar@{-->}[r] & \\
        N_{i-1} \ar[r] & Y_{i} \ar[r] & N_i \ar@{-->}[r]^{\delta_{(i)}} & .
      }$$
    Assume that we have obtained the morphism $f^{i+1}\colon U_{i+1}\rightarrow V_{i+1}$.
    Since $M_i \longrightarrow V_{i+1} \longrightarrow M_{i+1} \dashrightarrow$ is an $\mathbb{E}$-triangle and $d_V^{i+1}f^{i+1}d_U^i=0$, there exists $\tau^{i}$ such that $\phi^i\tau^i=f^{i+1}d_U^i$. Then by hypothesis we have the following commutative diagram:
    \begin{equation}\label{fig3}
        \xymatrix@=0.4cm{
       & & U_i \ar[ddd] \ar[rr]^{d_U^i} \ar@{-->}[ddl]_{\tau^i} & & U_{i+1} \ar[rrr]^{d_U^{i+1}} \ar[dl]^{f^{i+1}} \ar[ddd] & & & U_{i+2} \ar[ddd]^{g^{i+2}} \ar[dl]^{f^{i+2}} \\
      Z_i \ar[rrr] \ar[dr] \ar[ddd] & & & V_{i+1} \ar[ddd]^{h^{i+1}} \ar[rrr]^{d_V^{i+1}} \ar[drr] & & & V_{i+2} \ar[ddd]^{h^{i+2}} \\
      & M_i \ar[rru]^{\phi^i} \ar[ddd]^-{h'^i} & & & & M_{i+1} \ar[ru] \ar[ddd]^{h'^{i+1}} \\
       & & X_i \ar[dll] \ar[rr] & & X_{i+1} \ar[dl]^{l^{i+1}} \ar[rrr] & & & X_{i+2} \ar[dl] \\
      Y_i \ar[dr]_{\xi^i} \ar[rrr] & & & Y_{i+1} \ar[drr] \ar[rrr]^{d_Y^{i+1}} & & & Y_{i+2} & \\
       & N_i \ar[rru]_{\psi^i} & & & & N_{i+1} \ar[ru] & &
    }\end{equation}
    By diagram chasing, we have that $\psi^i{h'}^{i}\tau^i=\psi^i\xi^il^ig^i$. Since $U_i\in\mathcal{T}$, by Lemma \cite[Lemma 3.33]{HLN2}, $\mathbb{E}^k(U_i,N_{i+1})=0$ for $1\leq k\neq  n-i-1 <n$. In particular, $\mathbb{E}^{n-1}(U_i,N_{i+1})=0$.
     Then by \cite[Proposition 3.27]{HLN2} we obtain that $\mathscr{C}(U_i,N_i)\longrightarrow \mathscr{C}(U_i,Y_{i+1})$ is injective. Hence ${h'}^{i}\tau^i=\xi^il^ig^i$. Similar to the diagram (\ref{diyilahui}), the weak pullback provides a morphism $t^i\colon U_i \rightarrow Z_i$ such that the following diagram is commutative
      $$\xymatrix@=0.6cm{
        U_i \ar@/_/[ddr]_{l^ig^i} \ar@/^/[drr]^{\tau^i}
            \ar@{.>}[dr]|{t^i}    \\
         & Z_i \ar[d]^{s^i} \ar[r]_{\eta^i} & M_i \ar[d]^{{h'}^{i}}  \\
         & Y_i \ar[r]^{\xi^i}     & N_i.
         }$$
    Since $v^i$ is a right $\mathcal{T}$-approximation of $Z_{i}$, there exists $f^i\colon U_i\rightarrow V_i$ such that $v^ir^i=t^i$. Hence we get a new diagram
    \begin{equation}\label{fig2}
      \xymatrix@=0.5cm{
      U_i \ar[dd]_{g^i} \ar@{=}[rr] \ar@{.>}[dr]|{f^i} & & U_{i} \ar[dr]^{\tau^i}    \\
       & V_i \ar[dd]^{s^iv^i} \ar[rr]_{\eta^i} & & M_i \ar[dd]_{{h'}^{i}}  \\
       X_i \ar[dr]^{l^i} & & & \\
       & Y_i \ar[rr]^{\xi^i}  &  & N_i.
       }
    \end{equation}
    Combining diagrams (\ref{fig2}) and (\ref{fig3}), we obtain the following commutative diagram
      $$\xymatrix{
        U_i \ar[d] \ar[r] & U_{i+1} \ar[d] \\
        V_i \ar[r] & V_{i+1}.
      }$$
    Then we finish the proof.
  \end{proof}
\end{lemma}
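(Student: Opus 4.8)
The plan is to produce the two unknown rows of the diagram via Lemma~\ref{lemma10}, and then to construct all the diagonal comparison morphisms between the four layers by a descending induction whose engine is the weak pullback of Lemma~\ref{lemma7} together with the $n$-cluster tilting vanishing for objects of $\mathcal T$.

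First I would realize the top two rows. Since the $n$-exangle $X_0\to X_1\to\cdots\to X_{n+1}\dashrightarrow$ realizing $\eta$ is $\mathfrak s^n$-decomposable and $U_{n+1}\in\mathcal T$, the prescribed morphism $U_{n+1}\to X_{n+1}$ is a morphism in $\mathcal T$, so Lemma~\ref{lemma10} supplies an $\mathfrak s^n$-decomposable distinguished $n$-exangle $X_0\to U_1\to\cdots\to U_n\to U_{n+1}\dashrightarrow$ realizing the pullback of $\eta$ along $U_{n+1}\to X_{n+1}$, with $U_1,\dots,U_n\in\mathcal T$; moreover, the construction used in the proof of Lemma~\ref{lemma10} simultaneously produces a morphism of $n$-exangles from this new row down to the $X$-row extending $\mathrm{id}_{X_0}$ and the map $U_{n+1}\to X_{n+1}$. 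Applying Lemma~\ref{lemma10} symmetrically to $V_{n+1}\to Y_{n+1}$ and the $Y$-row yields the second row $Y_0\to V_1\to\cdots\to V_n\to V_{n+1}\dashrightarrow$ with $V_1,\dots,V_n\in\mathcal T$ together with a morphism of $n$-exangles down to the $Y$-row. What then remains is to construct diagonal morphisms $f^i\colon U_i\to V_i$ for $1\le i\le n$ which commute with the differentials of the $U$- and $V$-rows, with the maps just built to the $X$- and $Y$-rows, and with the prescribed maps $U_{n+1}\to X_{n+1}$, $U_{n+1}\to V_{n+1}$, $V_{n+1}\to Y_{n+1}$, $X_{n+1}\to Y_{n+1}$ at the top level, and which at $i=0$ reduce to the given map $X_0\to Y_0$.

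I would build the $f^i$ by descending induction on $i$, starting at $i=n$. For the base case, the construction of the $V$-row realizes the last $\mathbb E$-triangle of the $\delta$-decomposition, pulled back along $V_{n+1}\to Y_{n+1}$, as an $\mathbb E$-triangle $N_n\to Z_n\to V_{n+1}\dashrightarrow$ in which $V_n\to Z_n$ is a right $\mathcal T$-approximation that is an $\mathfrak s$-deflation (here ``nicely embedded'' is used), and by Lemma~\ref{lemma7} the square with corners $Z_n$, $V_{n+1}$, $Y_n$, $Y_{n+1}$ is a weak pullback. A short computation using commutativity of the prescribed top-level square and the fact that the comparison maps from the $U$-row to the $X$-row and from the $X$-row to the $Y$-row are morphisms of $n$-exangles shows that the composites $U_n\to U_{n+1}\to V_{n+1}$ and $U_n\to X_n\to Y_n$ become equal after mapping to $Y_{n+1}$; the weak pullback then furnishes $U_n\to Z_n$, which, since $U_n\in\mathcal T$, factors through the approximation $V_n\to Z_n$, producing $f^n\colon U_n\to V_n$ with the required compatibilities. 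For the inductive step $1\le i\le n-1$, given $f^{i+1}$ one has that $f^{i+1}d_U^i$ is killed by $d_V^{i+1}$ (because $d_V^{i+1}f^{i+1}$ factors through $d_U^{i+1}$ by the induction hypothesis and $d_U^{i+1}d_U^i=0$), hence $f^{i+1}d_U^i$ factors through the term $M_i\to V_{i+1}$ of the $\delta$-decomposition by exactness of $\mathscr C(U_i,-)$ on the pertinent $\mathbb E$-triangle. One then repeats the base-step argument one level down: a diagram chase in the three-dimensional diagram relating the $\mathfrak s$-decompositions of all four rows, combined with the injectivity of $\mathscr C(U_i,N_i)\to\mathscr C(U_i,Y_{i+1})$ — which holds because $U_i\in\mathcal T$ forces $\mathbb E^{n-1}(U_i,N_{i+1})=0$ by \cite[Lemma~3.33]{HLN2} and \cite[Proposition~3.27]{HLN2} — shows that the two relevant morphisms out of $U_i$ land compatibly in the corresponding weak pullback $Z_i$; factoring the induced map through the $\mathcal T$-approximation $V_i\to Z_i$ yields $f^i\colon U_i\to V_i$, and one verifies that the squares $U_i\to U_{i+1}\to V_{i+1}$ versus $U_i\to V_i\to V_{i+1}$ and $U_i\to V_i\to Y_i$ versus $U_i\to X_i\to Y_i$ commute. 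When the induction reaches $i=0$, gluing the squares produces the completed four-layer diagram.

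The main obstacle is the inductive step, and more precisely the verification at each level $i$ that the factorization through $M_i$ and the composite $U_i\to X_i\to Y_i$ are genuinely compatible for the weak pullback $Z_i$, so that a map $U_i\to Z_i$ actually exists. This is a real chase in a large, roughly $4\times(n+2)$, diagram, and it rests squarely on the $n$-cluster tilting hypothesis: it is the vanishing $\mathbb E^{k}(U_i,-)=0$ for $U_i\in\mathcal T$ that upgrades several a priori merely exact $\mathrm{Hom}$-sequences out of $U_i$ to ones with an injective comparison map, which is exactly what forces the needed equalities instead of leaving them ambiguous. Keeping careful track of which $\mathbb E$-triangles occurring in the $\mathfrak s$-decompositions of the four rows are linked by weak pullbacks (Lemma~\ref{lemma7}) and by right $\mathcal T$-approximations is where most of the bookkeeping resides; by comparison the realization step via Lemma~\ref{lemma10} and the final gluing are routine.
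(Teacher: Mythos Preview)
Your proposal is correct and follows essentially the same route as the paper: both arguments realize the $U$- and $V$-rows via Lemma~\ref{lemma10}, then run a descending induction whose base case and inductive step each combine the weak pullback of Lemma~\ref{lemma7} with lifting through a right $\mathcal{T}$-approximation, using the vanishing $\mathbb{E}^{n-1}(U_i,N_{i+1})=0$ from \cite[Lemma~3.33]{HLN2} and \cite[Proposition~3.27]{HLN2} to force the required equality in $N_i$. The only difference is presentational: you make explicit why $d_V^{i+1}f^{i+1}d_U^i=0$ holds (via the previously established square and $d_U^{i+1}d_U^i=0$), which the paper simply asserts.
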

Now we are in the position to prove Theorem \ref{Thm3}.
\begin{proof}[\bf(Proof of Theorem \ref{Thm3})]
  Just verify that if $\mathfrak{I}$ is a special precovering ideal, then $\mathfrak{J}$ is a special preenveloping ideal. Dually one can prove the converse statement. For $A\in\mathcal{T}$, since $\mathcal{T}$ has enough $\mathfrak{s}^n$-injective objects, there exists distinguished $n$-exangle $A\longrightarrow E_{1}\longrightarrow E_2 \longrightarrow \cdots \longrightarrow E_{n}\longrightarrow C \stackrel{\delta}\dashrightarrow$. Since $\mathfrak{I}$ is a special precovering ideal, there exists a distinguished $n$-exangle $Y\longrightarrow Z_{1}\longrightarrow Z_2 \longrightarrow \cdots \longrightarrow Z_{n}\longrightarrow C \stackrel{\eta}\dashrightarrow$ and $j'\colon Y\rightarrow C' \in \mathfrak{J}$ such that $i\colon X_n\rightarrow C$ is an $\mathfrak{I}$-precover and the following diagram commutes
\begin{equation}\label{fig1}
      \xymatrix{
    Y \ar[d]_-{j'} \ar[r] & Z_1 \ar[d] \ar[r] & Z_2 \ar[d] \ar[r] & \cdots  \ar[r] & Z_n \ar[d] \ar[r] & C \ar@{=}[d] \ar@{-->}^{\eta}[r] & \\
    C' \ar[r] & X_1 \ar[r] & X_2 \ar[r] & \cdots \ar[r] & X_n \ar[r]^-{i} & C \ar@{-->}^{{j'}_*\eta}[r] & .\\
  }
\end{equation}
Consider the following commutative diagram
  $$\xymatrix{
    A \ar@{=}[d] \ar[r]^-{j} & U_1 \ar[d] \ar[r] & U_2 \ar[r] \ar[d] & \cdots  \ar[r] & U_n \ar[d] \ar[r] & X_n \ar[d]^-{i} \ar@{-->}^{i^*\delta}[r] & \\
    A \ar[r] & E_1 \ar[r] & E_2 \ar[r] & \cdots \ar[r] & E_n \ar[r] & C \ar@{-->}^-{\delta}[r] & .
  }$$
By Lemma \ref{lemma9}, it suffices to show $j\in\mathfrak{J}$. Consider the following diagram:
  $$\xymatrix@=0.4cm{
    &  & &  & &  & &  & &  & & Y \ar[dl]_{j'} \ar[dd] & & \\
      & &  & &   & & & &  & & C' \ar[dd] & & & \\
    &   & &  & &  & &  & &  & & Z_1 \ar[dl] \ar[dd] & & \\
    & &  & &   & & & &  & & X_1 \ar[dd] & & & \\
    &   & &  & &  & &  & &  & & \cdots \ar@{.>}[dl] \ar[dd] & & \\
    & &  & &   & & & &  & & \cdots \ar[dd] & & & \\
    & A \ar[rr] \ar@{=}[dd] \ar@{=}[dl] & & Y_1 \ar[rr] \ar[dl] \ar[dd] & & Y_2 \ar[rr] \ar[dl] \ar[dd] & & \cdots \ar[rr] \ar@{..>}[dd] \ar@{..>}[dl] & & Y_n \ar[rr] \ar[dd] \ar[dl] & & Z_n \ar[dl]_{h^{n}} \ar[dd]^-{{ih^n}} \ar@{-->}[rr] & & \\
    A \ar[rr] \ar@{=}[dd] & & U_1 \ar[dd] \ar[rr] & & U_2 \ar[dd] \ar[rr] & & \cdots \ar[rr] \ar@{..>}[dd] & & U_n \ar[dd] \ar[rr] & & X_{n} \ar@{-->}[rr] \ar[dd]_-{i} & & & \\
    & A \ar[rr] \ar@{=}[dl] & & E_1 \ar[rr] \ar@{=}[dl] & & E_2 \ar[rr] \ar@{=}[dl] & & \cdots \ar[rr] \ar@{..>}[dl] & & E_n \ar[rr] \ar@{=}[dl] & & C \ar@{=}[dl] \ar@{-->}[rr]^-{\delta} \ar@{-->}[dd]^{\eta} & & \\
    A \ar[rr] & & E_1 \ar[rr] & & E_2 \ar[rr] & & \cdots \ar[rr] & & E_n \ar[rr] & & C \ar@{-->}[rr]^-{\delta} \ar@{-->}[dd]_{{j'}_*\eta} & & & \\
    & & & & & & & & & & & & & & \\
    & & & & & & & & & & & & & &
   }$$
By Lemma \ref{lemma8}, starting with diagram (\ref{fig1}) as the right two columns, we get the rows step by step, which are $n$-exangles except the top two rows.
In addtion, consider diagram (\ref{fig1}) as the bottom and the last square of the bottom two rows, we have the following commutative diagram
  $$\xymatrix@=0.4cm{
    &  & &  & &  & &  & &  & & E_n \ar@{=}[dl] \ar[dd]  & & \\
     & &  & &  & &  & &  & & E_n  \ar[dd] & & & \\
    & Y \ar[rr] \ar[dl]^-{j'} & & Z_1 \ar[rr] \ar[dl] & & Z_2 \ar[rr] \ar[dl] & & \cdots \ar[rr] \ar@{..>}[dl] & & Z_n \ar[rr] \ar[dl] & & C \ar@{=}[dl] \ar@{-->}[rr]^-{\eta} & & \\
    C' \ar[rr] & & X_1 \ar[rr] & & X_2 \ar[rr] & & \cdots \ar[rr] & & X_n \ar[rr] & & C \ar@{-->}[rr]^-{j'_*\eta} & & &
   }$$
We can also complete it and get the columns step by step except the left two columns. By Lemma \ref{lemma8}, the two procedures give the same commutative diagram
  $$\xymatrix@=0.4cm{\
  & A \ar[rr] \ar@{=}[dd] \ar@{=}[dl] & & Y \ar@{=}[rr] \ar[dl] \ar[dd] & & Y \ar@{=}[rr] \ar[dl] \ar[dd] & & \cdots \ar@{=}[rr] \ar@{..>}[dd] \ar@{..>}[dl] & & Y \ar@{=}[rr] \ar[dd] \ar[dl] & & Y \ar[dl] \ar[dd] \ar@{-->}[rr] & & \\
  A \ar[rr] \ar@{=}[dd] & & C' \ar[dd] \ar@{=}[rr] & & C' \ar[dd] \ar@{=}[rr] & & \cdots \ar@{=}[rr] \ar@{..>}[dd] & & C' \ar[dd] \ar@{=}[rr] & & C' \ar@{-->}[rr] \ar[dd] & & & \\
  & A \ar[rr] \ar@{=}[ddd] \ar@{=}[dl] & & Y_1^1 \ar[rr] \ar[dl] \ar[ddd] & & Y_2^1 \ar[rr] \ar[dl] \ar[ddd] & & \cdots \ar[rr] \ar@{..>}[ddd] \ar@{..>}[dl] & & Y_n^1 \ar[rr] \ar[ddd] \ar[dl] & & Z_1 \ar[dl] \ar[ddd] \ar@{-->}[rr] & & \\
  A \ar[rr] \ar@{=}[ddd] & & U_1^1 \ar[ddd] \ar[rr] & & U_2^1 \ar[ddd] \ar[rr] & & \cdots \ar[rr] \ar@{..>}[ddd] & & U_n^1 \ar[ddd] \ar[rr] & & X_1 \ar@{-->}[rr] \ar[ddd] & & & \\
  & & & & & & & & & & & & & & \\
  & A \ar[rr] \ar@{=}[ddd] \ar@{=}[dl] & & \vdots \ar@{..>}[rr] \ar@{..>}[dl] \ar[ddd] & & \vdots \ar@{..>}[rr] \ar@{..>}[dl] \ar[ddd] & & \cdots \ar@{..>}[rr] \ar@{..>}[ddd] \ar@{..>}[dl] & & \vdots \ar@{..>}[rr] \ar[ddd] \ar@{..>}[dl] & & \vdots \ar@{..>}[dl] \ar[ddd] \ar@{-->}[rr] & & \\
  A \ar[rr] \ar@{=}[ddd] & & \vdots \ar[ddd] \ar@{..>}[rr] & & \vdots \ar[ddd] \ar@{..>}[rr] & & \cdots \ar@{..>}[rr] \ar@{..>}[ddd] & & \vdots \ar[ddd] \ar@{..>}[rr] & & \vdots \ar@{-->}[rr] \ar[ddd] & & & \\
  & & & & & & & & & & & & & & \\
  & A \ar[rr] \ar@{=}[dd] \ar@{=}[dl] & & Y_1 \ar[rr] \ar[dl]^{j^1} \ar[dd] & & Y_2 \ar[rr] \ar[dl] \ar[dd] & & \cdots \ar[rr] \ar@{..>}[dd] \ar@{..>}[dl] & & Y_n \ar[rr] \ar[dd] \ar[dl] & & Z_n \ar[dl]_{h^{n}} \ar[dd]^-{{ih^n}} \ar@{-->}[rr] & & \\
  A \ar[rr] \ar@{=}[dd] & & U_1 \ar[dd] \ar[rr] & & U_2 \ar[dd] \ar[rr] & & \cdots \ar[rr] \ar@{..>}[dd] & & U_n \ar[dd] \ar[rr] & & X_{n} \ar@{-->}[rr] \ar[dd]_-{i} & & & \\
  & A \ar[rr] \ar@{=}[dl] & & E_1 \ar[rr] \ar@{-->}[dd] \ar@{=}[dl] & & E_2 \ar[rr] \ar@{=}[dl] \ar@{-->}[dd] & & \cdots \ar[rr] \ar@{..>}[dl] \ar@{-->}[dd] & & E_n \ar[rr] \ar@{=}[dl] \ar@{-->}[dd] & & C \ar@{=}[dl] \ar@{-->}[rr]^-{\delta} \ar@{-->}[dd]^{\eta} & & \\
  A \ar[rr] & & E_1 \ar[rr] \ar@{-->}[dd] & & E_2 \ar[rr] \ar@{-->}[dd] & & \cdots \ar[rr] \ar@{-->}[dd] & & E_n \ar@{-->}[dd] \ar[rr] & & C \ar@{-->}[rr]^-{\delta} \ar@{-->}[dd]_{{j'}^*\eta} & & & \\
  & & & & & & & & & & & & & & \\
  & & & & & & & & & & & & & &
    }$$
We get the following morphism of distinguished $n$-exangles
  $$\xymatrix{
    Y \ar[d]_-{j'} \ar[r] & Y_1^1 \ar[d] \ar[r] & Y_1^2 \ar[d] \ar[r] & \cdots  \ar[r] & Y_1 \ar[d]^{j^1} \ar[r] & E_1 \ar@{=}[d] \ar@{-->}[r] & \\
    C' \ar[r] & U_1^1 \ar[r] & U_1^2 \ar[r] & \cdots \ar[r] & U_1 \ar[r] & E_1 \ar@{-->}[r] & .
  }$$
Since $\mathfrak{J}$ is closed under $n$-extensions by $\mathfrak{s}^n$-injective objects, it follows that $j^1\in\mathfrak{J}$. Since $j$ factors through $j^1$, we obtain that $j\in\mathfrak{J}$. Therefore, $\mathfrak{J}$ is a special preenveloping ideal. 
\end{proof}

In conclusion, we give the following theorem, which can be viewed as the higher version of \cite[Theorem 1]{FGHT} in nicely embedded $n$-cluster tilting subcategories of extriangulated categories.

\begin{theorem}\label{Thm5}
  Let $(\mathscr{C},\mathbb{E},\mathfrak{s})$ be an extriangulated category with the Condition (WIC) and $(\mathcal{T},\mathbb{E}^n,\mathfrak{s}^n)$ be a nicely embedded $n$-cluster tilting subcategory of $\mathscr{C}$ with enough $\mathfrak{s}^n$-injective objects and projective morphisms. Suppose that $\mathfrak{I}$ is an ideal of $\mathcal{T}$ such that $\mathfrak{I}^{\perp}$ is closed under $n$-extensions by $\mathfrak{s}^n$-injective objects. Then the following statements are equivalent:

  {\rm (i)} there exists an additive subfunctor $\mathbb{F}\subseteq\mathbb{E}^n$ with enough injective morphisms such that $\mathfrak{I}={\bf Ph}(\mathbb{F})$;

  {\rm (ii)} $\mathfrak{I}$ is a special precovering ideal;

  {\rm (iii)} the $n$-ideal cotorsion pair $(\mathfrak{I},\mathfrak{I}^{\perp})$ is complete;

  {\rm (iv)} the additive subfunctor ${\bf PB}(\mathfrak{I})\subseteq\mathbb{E}^n$ has enough special injective morphisms and $\mathfrak{I}={\bf Ph}({\bf PB}(\mathfrak{I}))$.
  \begin{proof}
    (i)$\Rightarrow$(ii) by Theorem \ref{Thm4}; (ii)$\Rightarrow$(iii) by Theorems \ref{Thm2} and \ref{Thm3}; (iii)$\Rightarrow$(iv) by Corollary \ref{Coro2}; (iv)$\Rightarrow$(i) is obvious.
  \end{proof}
\end{theorem}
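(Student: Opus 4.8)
The plan is to prove the four conditions equivalent by closing the cycle $\text{(i)}\Rightarrow\text{(ii)}\Rightarrow\text{(iii)}\Rightarrow\text{(iv)}\Rightarrow\text{(i)}$, invoking at each step the corresponding result already established for an arbitrary $n$-exangulated category, applied to $(\mathcal{T},\mathbb{E}^n,\mathfrak{s}^n)$; recall that this triplet is itself an $n$-exangulated category by \cite[Theorem 3.41]{HLN2}, and that the standing hypotheses of Theorem \ref{Thm5} (Condition (WIC), $\mathcal{T}$ nicely embedded, enough projective morphisms, enough $\mathfrak{s}^n$-injective objects, $\mathfrak{I}^{\perp}$ closed under $n$-extensions by $\mathfrak{s}^n$-injectives) are exactly the inputs those results demand.

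First, for $\text{(i)}\Rightarrow\text{(ii)}$ I would feed the given subfunctor $\mathbb{F}\subseteq\mathbb{E}^n$, which has enough injective morphisms and satisfies $\mathfrak{I}={\bf Ph}(\mathbb{F})$, into Theorem \ref{Thm6} (equivalently Theorem \ref{Thm4}), using that $\mathcal{T}$ has enough projective morphisms; this directly produces a special ${\bf Ph}(\mathbb{F})=\mathfrak{I}$-precover of every object, so $\mathfrak{I}$ is special precovering. Next, for $\text{(ii)}\Rightarrow\text{(iii)}$, Theorem \ref{Thm2} already gives that $(\mathfrak{I},\mathfrak{I}^{\perp})$ is an $n$-ideal cotorsion pair and that every object has a special $\mathfrak{I}$-precover; it remains to supply special $\mathfrak{I}^{\perp}$-preenvelopes, and for this I would apply Salce's Lemma (Theorem \ref{Thm3}) with $\mathfrak{J}=\mathfrak{I}^{\perp}$. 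The key point is that the implication actually used, namely ``$\mathfrak{I}$ special precovering $\Rightarrow$ $\mathfrak{J}$ special preenveloping'', consumes from the hypotheses of Theorem \ref{Thm3} only that $\mathcal{T}$ has enough $\mathfrak{s}^n$-injective objects and that $\mathfrak{J}=\mathfrak{I}^{\perp}$ is closed under $n$-extensions by $\mathfrak{s}^n$-injective objects; both are assumed here. Combining this with the special $\mathfrak{I}$-precovers yields completeness of $(\mathfrak{I},\mathfrak{I}^{\perp})$.

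Then $\text{(iii)}\Rightarrow\text{(iv)}$ is obtained by applying Corollary \ref{Coro2} to the complete $n$-ideal cotorsion pair $(\mathfrak{I},\mathfrak{I}^{\perp})$ in $(\mathcal{T},\mathbb{E}^n,\mathfrak{s}^n)$: it tells us that $\mathbb{F}={\bf PB}(\mathfrak{I})$ is an additive subfunctor of $\mathbb{E}^n$ (this part also follows from Proposition \ref{Prop4}) having enough special injective morphisms and satisfying $\mathfrak{I}={\bf Ph}({\bf PB}(\mathfrak{I}))$, which is precisely (iv). Finally $\text{(iv)}\Rightarrow\text{(i)}$ is immediate: take $\mathbb{F}={\bf PB}(\mathfrak{I})$, which is an additive subfunctor of $\mathbb{E}^n$ by Proposition \ref{Prop4}, and observe that the datum witnessing ``enough special injective morphisms'' in Definition \ref{F16} in particular witnesses ``enough injective morphisms''; since $\mathfrak{I}={\bf Ph}(\mathbb{F})$ by (iv), this is exactly statement (i).

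Since the substantive arguments all reside in Theorems \ref{Thm2}, \ref{Thm4}, \ref{Thm3} and Corollary \ref{Coro2}, the only genuine work in this proof is bookkeeping, and I expect the single delicate point to be the verification in $\text{(ii)}\Rightarrow\text{(iii)}$ that the asymmetry in the hypotheses of Salce's Lemma lets us use only the closure assumption on $\mathfrak{I}^{\perp}$ — one must isolate exactly which half of the proof of Theorem \ref{Thm3} is being invoked, and confirm that the ``$\mathfrak{I}$ closed under $n$-coextensions by $\mathfrak{s}^n$-projectives'' hypothesis there is not needed for the direction at hand. No additional lemma should be required.
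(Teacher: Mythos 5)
Your proof is correct and follows the same cycle $\text{(i)}\Rightarrow\text{(ii)}\Rightarrow\text{(iii)}\Rightarrow\text{(iv)}\Rightarrow\text{(i)}$ invoking the same four results (Theorems \ref{Thm4}, \ref{Thm2}, \ref{Thm3} and Corollary \ref{Coro2}) as the paper. You also correctly flag the only real subtlety — that Theorem \ref{Thm3} as stated assumes closure of both $\mathfrak{I}$ and $\mathfrak{J}$, while Theorem \ref{Thm5} only supplies the $\mathfrak{I}^{\perp}$ closure, so one must note that the direction of Salce's Lemma being used (special precovering $\Rightarrow$ special preenveloping) consumes only the closure hypothesis on $\mathfrak{J}=\mathfrak{I}^{\perp}$; the paper glosses over this by citing Theorem \ref{Thm3} wholesale.
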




\end{document}